\documentclass[a4paper,10pt]{article}
\usepackage[utf8]{inputenc}
\usepackage{amsmath,amsfonts,amssymb,amscd,amsthm,xspace, graphicx}
\usepackage[normalem]{ulem}
\usepackage[mathscr]{eucal}
\usepackage{color}
\usepackage{ulem}
\usepackage[table]{xcolor}

\usepackage[pagebackref,colorlinks,linkcolor={blue},citecolor={red}]{hyperref}
\newtheoremstyle{plain}{\topsep}{\topsep}{\slshape}{}{\bfseries}{.}{.5em}{}
\newtheorem{theorem}{Theorem}
\newtheorem{lemma}[theorem]{Lemma}
\newtheorem{corollary}[theorem]{Corollary}

\theoremstyle{remark}
\newtheorem{remark}[theorem]{Remark}
\numberwithin{equation}{section}

\def\div{\hbox{\rm div}\,}
\newcommand{\meas}{\mathop{\mathrm{meas}}}
\def\loc{{\mathrm loc}}
\def\R{{\mathbb R}}

\def\ve{{\mathbf v}}
\def\me{{m}}
\def\aa{{\mathbf a}}
\def\bb{{\mathbf b}}
\def\ee{{\mathbf e}}
\def\we{{\mathbf w}}
\def\e{{\varepsilon}}
\def\eee{{\tilde\varepsilon}}
\def\fe{{\mathbf f}}

\def\Xint#1{\mathchoice
{\XXint\displaystyle\textstyle{#1}}%
{\XXint\textstyle\scriptstyle{#1}}%
{\XXint\scriptstyle\scriptscriptstyle{#1}}%
{\XXint\scriptscriptstyle\scriptscriptstyle{#1}}%
\!\int}
\def\XXint#1#2#3{{\setbox0=\hbox{$#1{#2#3}{\int}$ }
\vcenter{\hbox{$#2#3$ }}\kern-.6\wd0}}

\def\dashint{\Xint-}

\begin{document}

\title{Existence and uniqueness for plane stationary Navier{--}Stokes flows with compactly supported force}
\author{Julien Guillod\footnotemark[1] \and Mikhail Korobkov\footnotemark[2]  \and Xiao Ren\footnotemark[3]}
\renewcommand{\thefootnote}{\fnsymbol{footnote}}
\footnotetext[1]{Sorbonne Université, CNRS, Université de Paris, Inria, Laboratoire Jacques-Louis Lions (LJLL), 75005 Paris, France. email: julien.guillod@sorbonne-universite.fr}
\footnotetext[2]{
School of Mathematical Sciences, Fudan University, Shanghai 200433, P.R.China; and Sobolev Institute of Mathematics, pr-t Ac. Koptyug, 4, Novosibirsk, 630090, Russia.  email: korob@math.nsc.ru}
\footnotetext[3]{School of Mathematical Sciences,
Fudan University, Shanghai 200433, P. R.China. email: xiaoren18@fudan.edu.cn}

\maketitle

\begin{abstract}We study the stationary Navier--Stokes equations in the whole plane with a compactly supported force term and with a prescribed constant spatial limit.  Prior to this work, existence of solutions to this problem was only known under special symmetry and smallness assumptions.

In the paper we solve the key difficulties in applying  Leray's {\it invading domains method} and, as a consequence, prove the existence of $D$-solutions in the whole plane for arbitrary compactly supported force. The boundary condition at infinity are verified in two different scenarios: (I) the~limiting velocity is sufficient large with respect to the external force, (II) both the total integral of force and the~limiting velocity vanish. Hence, our method produces large class of new solutions with prescribed spatial limits. Moreover, we show the uniqueness of $D$-solutions to this problem in a perturbative regime.

The main tools here are two new estimates for general Navier-Stokes solutions, which have rather simple forms. They control the difference between mean values of the velocity over two concentric circles in terms of the Dirichlet integral in the annulus between them. 
\end{abstract}

\renewcommand{\thefootnote}{\arabic{footnote}}

\section{Introduction}

\subsection{Motivation and main results}
We study the stationary Navier--Stokes equations in the whole plane $\mathbb{R}^2$ driven by a force term:
\begin{equation}  \label{NSE} 
\left\{
\begin{aligned}
 &- \Delta \mathbf{w} + (\mathbf{w} \cdot \nabla) \mathbf{w} + \nabla p = \mathbf{f}, \\
 & \nabla \cdot \mathbf{w} = 0, \\
 & \mathbf{w}(z) \to\mathbf{w}_\infty=\lambda \mathbf{e}_1 \ \ \text{as}\ \  |z| \to \infty.
\end{aligned}
\right.
\end{equation}
Here, $\lambda\in\mathbb{R}_{\ge 0}$ is a physical parameter specifying the ``boundary" condition at spatial infinity. One major mathematical challenge in the study of the stationary Navier-Stokes equations is  the existence of solutions in two-dimensional unbounded domains along with the characterization of their asymptotic behaviours. In the case when the domain $\Omega$ is exterior, \emph{i.e.}, $\Omega = \mathbb{R}^2 \setminus \overline{U}$ with $U$ bounded, one often takes $\mathbf{f} = 0$ and homogeneous Dirichlet boundary condition on $\partial \Omega$,  so that the system describes stationary flows past rigid obstacles. For comparison,  we also present such an exterior domain problem here,
\begin{equation}  \label{NSE-ext}
\left\{
\begin{aligned}
 & -\Delta \mathbf{w} + (\mathbf{w} \cdot \nabla) \mathbf{w} + \nabla p = 0 \ \ \mathrm{in} \ \Omega,\\
 & \nabla \cdot \mathbf{w} = 0 \ \ \mathrm{in} \ \Omega,\\
 & \mathbf{w}|_{\partial \Omega} = 0, \\
 & \mathbf{w}(z) \to\mathbf{w}_\infty=\lambda \mathbf{e}_1 \ \ \text{as}\ \  |z| \to \infty.
\end{aligned}
\right.
\end{equation} 
In this paper, we focus on another physically important case where the unbounded domain is simply the whole plane $\mathbb{R}^2$. The aim of this paper is to tackle some key difficulties in the $\mathbb{R}^2$ case and establish new existence and uniqueness results. 

Our starting point is to apply Leray's invading domains method  proposed in 1933 \cite{Leray}. Leray's original idea was intended for the exterior domain problem \eqref{NSE-ext} --- one first solves the system on large bounded domains $\Omega_k := \Omega \cap B_{R_k} (k=1,2,3,\cdots)$ with an increasing sequence of radii $R_k \to \infty$, and then takes the limit $k \to \infty$. The boundary condition \eqref{NSE-ext}$_4$ at infinity will now be imposed on the outer boundary.

\begin{equation}
\label{NSE-k-ext}
\left\{\begin{array}{r@{}l}
- \Delta{{\mathbf w}_k}+({\mathbf w}_k\cdot\nabla){\mathbf w}_k+ \nabla p_k  & {} ={\bf 0}\qquad \hbox{\rm in } \Omega_k, \\[2pt]
\nabla \cdot {{\mathbf w}_k} & {} =0\,\qquad \hbox{\rm in } \Omega_k,  \\[2pt]
{{\mathbf w}_k} & {} = \mathbf0\,\qquad \hbox{\rm on } \partial\Omega,  \\[2pt]
{\mathbf w}_k & {}=\we_\infty\;\quad\mbox{for \ }|z|=R_k.
 \end{array}\right.
\end{equation}
In \cite{Leray}, Leray showed that the above invading domain problems produce a sequence of solutions with uniformly bounded Dirichlet integral, \emph{i.e.}, 
$$\int_\Omega |\nabla \mathbf{w}_k|^2 \le C,$$ 
with $C$ independent of $k$. Since $\mathbf{w}_k$ vanishes on $\partial \Omega$, by Sobolev embedding the Dirichlet integral controls local $L^p$ norms of $\mathbf{w}$ for any $p<\infty$. Hence, weak limits of $\mathbf{w}_k$ are well-defined, and one obtains solutions in $\Omega$ with bounded total Dirichlet integral\footnote{Solutions of bounded Dirichlet integral are often called $D$-solutions.}. These solutions are now referred to as Leray's solutions. Whether Leray's solutions achieve the desired spatial limit $\mathbf{w}_\infty$ has been open for nearly 90 years\footnote{This major open problem is only for two dimensions. The main difficulty in 2d is that the Dirichlet energy alone is not sufficient to control the behaviour of functions at infinity. In three dimensions, Leray's solutions do achieve the correct limiting values.}. Recently, it was settled in the small Reynolds number case \cite{KR21new}. The classical papers of Gilbarg--Weinberger~\cite{GW1, GW}, Amick\cite{Amick}  and many recent works, \emph{e.g.}, \cite{GaldiS, S, KPR, KPR2} are devoted to the asymptotic properties of Leray's solutions, or more generally, of $D$-solutions in exterior domains. In particular, the second author with Pileckas and Russo showed that $D$-solutions  in exterior domains always have uniform spatial limits \cite{KPR}. For more backgrounds and details on the exterior domain problem, we refer the readers to the papers \cite{KPR20, KR21} and the book of Galdi \cite{G}. 

However, Leray's original arguments left out the whole plane case. Although it is easy to formulate the invading domain problems here which appear similar to \eqref{NSE-k-ext},
\begin{equation}
\label{NSE-k}
\left\{\begin{array}{r@{}l}
- \Delta{{\mathbf w}_k}+({\mathbf w}_k\cdot\nabla){\mathbf w}_k+ \nabla p_k  & {} ={\bf f}\qquad \hbox{\rm in }  B_{R_k}, \\[2pt]
\nabla \cdot{{\mathbf w}_k} & {} =0\,\qquad \hbox{\rm in } B_{R_k},  \\[2pt]
{\mathbf w}_k & {}=\we_\infty\;\quad\mbox{for \ }|z|=R_k.
 \end{array}\right.
\end{equation}
there are new essential obstructions arising in the analysis, as already pointed out in \cite{GuilW2}. Below, we summarize all the key obstructions that must be tackled. Note that (a) and (b) are inherent to the whole plane problem \eqref{NSE}, while (c) are shared for both \eqref{NSE} and \eqref{NSE-ext}. 
\begin{itemize}
\item[(a)] It is difficult to prove that the solutions $\mathbf{w}_k$ to \eqref{NSE-k} have uniformly bounded Dirichlet integrals.  It should be mentioned that, when the total force $\mathcal{F} = \int_{\mathbb{R}^2} \mathbf{f}$ vanishes, such a difficulty is absent by the following simple argument. We can write $\mathbf{f} = \nabla \cdot \mathbb{F}$ for some tensor $\mathbb{F} \in L^2(\mathbb{R}^2)$ (see \cite[Lemma 3.6]{GuilW2} or Lemma \ref{lem-tensorF} below). Then, we test \eqref{NSE-k}$_{1}$ with $\mathbf{w}_k - \lambda \mathbf{e}_1$ and integrate by parts to get the energy equality
\begin{equation}
\int_{B_{R_k}} |\nabla \mathbf{w}_k|^2 = \int_{B_{R_k}} \mathbb{F} : \nabla \mathbf{w}_k  
\end{equation}
where $\mathbb{F} : \nabla \mathbf{w}_k$ stands for $\sum_{i,j} \mathbb{F}_{ij} \partial_j w_{k,i}$. Using H\"older's inequality we get a nice bound on the Dirichlet integral,
\begin{equation*}
\int_{B_{R_k}} |\nabla \mathbf{w}_k|^2\le \int_{\mathbb{R}^2} |\mathbb{F}|^2.
\end{equation*}
However, for a general $\mathbf{f}$, such a direct estimate is unavailable.

\item[(b)]
 Moreover, even if the uniform boundedness in (1) is assumed, it is not clear whether the local $L^p$ norms of $\mathbf{w}_k$ are uniformly bounded since the Dirichlet integral controls only the derivative of $\mathbf{w}_k$ and the condition \eqref{NSE-k}$_3$ is imposed on distant outer boundaries. 

\item[(c)]
If both (a) and (b) are solved, one may define $\mathbf{w}_L$ as the weak limit of some subsequence of $\mathbf{w}_k$. Then there is still one more difficult question, that is, 
\begin{equation}\label{des-eq}
\mathrm{do \ we \ have} \ \ \lim_{|z| \to \infty} \mathbf{w}_L(z) = \mathbf{w}_\infty ?
\end{equation}
\end{itemize}

Now, we are ready to introduce the main results of the paper for the systems \eqref{NSE} and \eqref{NSE-k}. 
 We shall assume that the force  $\mathbf{f}$ is compactly supported, \emph{i.e.}, $\mathrm{supp}\, \mathbf{f} \subseteq B_R$ for some $R > 0$. Hence, the theories for $D$-solutions in exterior domains without force can be applied in our situation. We also assume a minimal regularity of $\mathbf{f}$, that is, $\mathbf{f} \in H^{-1}(B_{2R})$, see Section \ref{sec-not}. 

First of all, fortunately we were able to completely solve (a) and (b), that is, we proved  the uniform estimates $\|\we_k\|_{L^q(B_1)}+\|\nabla\we_k\|_{L^2(B_{R_k})}\le C$ (see Theorem~\ref{th-uniform-bound}). As an immediate corollary, the Leray solutions~$\we_L$ are well-defined as weak limits of $\we_k$. Hence we have constructed solutions to \eqref{NSE}$_{1,2}$ for arbitrary force $\mathbf{f}$ in the whole plane. Second, we study (c) and justify the identity~(\ref{des-eq}) for each of the following two scenarios.
\begin{enumerate}
\item[(I)] the limiting speed $\lambda$ is large enough with respect to the norm of force $\|\fe\|_{H^{-1}(B_{2R})}$ (see Theorem~\ref{th-s1});

\item[(II)] the total integral of force and the limiting speed $\lambda$ are both zero (see Theorem~\ref{th-s2}).
\end{enumerate}

Third, we prove the uniqueness of $D$-solutions to~(\ref{NSE})  in a perturbative regime (see 
Theorem~\ref{th-uniq}). We emphasize that the conditions in Theorem~\ref{th-s1} and Theorem~\ref{th-uniq} are very different in nature. In particular, for fixed $R$, the condition \eqref{eq-uni-1} implies that $\|\mathbf{f}\|_{H^{-1}(B_{2R})}$ is smaller than an absolute constant independent of $\lambda$, while scenario (I) allows arbitrarily large external force as long as $\lambda$ is chosen even larger.

With the help of Steps 4-5 in our proof of Theorem \ref{th-uniq}, one may extend the fixed-point methods in~\cite{FS} and \cite[Section~XII.5]{G}, to construct perturbative solutions to \eqref{NSE} with $\lambda \neq 0$. The main idea is to view \eqref{NSE} as a perturbation of the linear Oseen system with external force $\mathbf{f}$, and the nonlinear solution will be found close to the Oseen solution in the Banach space $X$ defined in \eqref{eq-Xspace}. Such a completely perturbative scheme would require a condition stricter than scenario (I) and milder than that of Theorem~\ref{th-uniq}. Theorem \ref{th-uniq} can be viewed as a weak-strong uniqueness theorem, in the sense that the perturbative solutions are unique in the broader class of $D$-solutions.

\subsection{Main tools: two new estimates for general Navier-Stokes solutions}

The main tools for our research here are two new estimates on the difference between mean values of the velocity over two concentric circles in terms of the Dirichlet integral in the annulus between them (see Theorems~\ref{th:estim1} and \ref{th:estim2}).  They are also among the main contributions of this paper. As well-known, it is not possible in general to control the growth of a~function through its Dirichlet integral in large planar domains\footnote{For instance, the function $f(z) = \left(\ln(|z|)\right)^\frac13$ has a finite Dirichlet integral in $\mathbb{R}^2 \setminus B_2$ but diverges at infinity.}. Nevertheless, the~special structures of the Navier--Stokes system create the possibility to derive such very general estimates, whose forms turn out to be rather clear and simple. The first estimate claims that 
 \begin{equation} \label{in:estim-m1}
 |\bar\we(r_1)-\bar\we(r_2)|\le C_*(1+\mu)\sqrt{D},
 \end{equation}
 where
 \begin{equation} \label{in:estim-m2}
\mu=\frac1{r_1\me},\ \ \quad \me:=\max\bigl\{|\bar\we(r_1)|,|\bar\we(r_2)|\bigr\},\qquad D:=\int\limits_{r_1\le|z|\le r_2}|\nabla\we|^2
 \end{equation}
 and $C_*$ is some universal positive constant (does not depend on $\we, r_i$, etc.)
The proof of it based on classical methods in \cite{GW, Amick} as well as the recent progress in  \cite{KPR20}.  Namely, by \cite{GW} the pressure is under control of the Dirichlet integral. So, assuming that the estimate~(\ref{in:estim-m1}) fails, we obtain the existence of  two concentric levels sets of the Bernoulli pressure  $\Phi=p+\frac12|\we|^2$, and the difference of the values of $\Phi$ on these level sets is much bigger than $m(1+\mu)\sqrt{D}$. Recall, that 
 \begin{equation} \label{in:estim-m2}\nabla\Phi=-\nabla^\bot\omega+\omega\we^\bot, \end{equation} 
where $\omega=\partial_2w_1-\partial_1w_2$ is the corresponding vorticity. By results of~\cite{GW}, the line integrals of the first term $\nabla\omega$ are small with respect to~$D$, moreover, the variation of {\it the direction} of the velocity $\we$ between our $\Phi$-level sets is under control of the Dirichlet integral as well. The crucial fact is that the vorticity $\omega$ does not change sign between levels sets of $\Phi$ (it was proved in~\cite{KPR20} based on the elegant ideas of~\cite{Amick}). So ~{\it the direction} of $\nabla\Phi$ is almost constant between the concentric level sets of~$\Phi$,  a~contradiction.  

The proof of of the second (asymptotic) estimate crucially relies on the recent results in \cite{KR21new} concerning solutions to the Euler system in the~unit disk with constant velocity on the boundary which are produced by a blow-down procedure (see Section~\ref{sec:3} of the present paper). Namely, it turns out that the pressure satisfies the~classical Neumann boundary conditions, so the absolute value of the pressure for the considered Euler solutions is under control of the Dirichlet integrals as well. 

Both estimates are invariant with respect to the natural rescaling of the stationary Navier--Stokes system.

\subsection{Open problem and discussions}

The key open problem for \eqref{NSE} is to prove existence of solutions in $\mathbb{R}^2$ given arbitrary force (with sufficient decay and regularity) and arbitrary $\lambda$, that is, to remove the constraints in our Theorems \ref{th-s1} and \ref{th-s2}.
 
There are a few works on the construction of solutions to \eqref{NSE} using different approaches from ours. In \cite{GuilW2}, the first author and Wittwer proposed a modified invading domains method which could, for zero total force, produce infinitely many solutions parametrized by their mean values in the unit disc. The spatial limits of their solutions are not clear, \emph{i.e.}, the condition \eqref{NSE}$_3$ is hard to verify. In \cite{Yam09}, Yamazaki constructed solutions with $\lambda = 0$ and explicit decay rates under special symmetry and smallness assumptions on $\mathbf{f}$. In \cite[Section 3]{JG}, the first author also considered the $\lambda = 0$ case and proved existence for $\mathbf{f}$ from a set of codimension three. All three constructions mentioned here work under zero total force assumption. The study for $\lambda \neq 0$ case seems rather limited prior to this work, apart from the possibility of constructing perturbative solutions which we have mentioned earlier. 
 
For $\lambda = 0$, the precise asymptotic behaviour of solutions is of particular interest and difficulty, see, \emph{e.g.}, the discussions in \cite[Section 5]{JG}. Theorem \ref{th-s2} constructs a large class of solutions converging to $0$ at infinity without giving explicit  rates. It would be of great value to prove uniform decay estimates for these solutions. Note that in three dimensions, the Liouville conjecture for $D$-solutions in whole space $\mathbb{R}^3$ with zero spatial limit remains an important open problem where the lack of decay estimates is the key obstacle, see, \emph{e.g.}, \cite[Remark X.9.4]{G} and \cite{Seregin}.

For comparison, we mention that the existence problem for the exterior problem \eqref{NSE-ext} with arbitrary $\lambda$ is also open, and is listed by Yudovich as one of ``eleven great problems of mathematical hydrodynamics" \cite{Y}. See the work of Galdi \cite{G2} for a conditional result on the existence of solutions with large $\lambda$.

Note, in conclusion, that the very recent paper~\cite{ABC} on non-uniqueness of the classical Leray--Hopf  solutions to the non-stationary forced Navier--Stokes system demonstrates the importance and fruitfulness of the studying of such forced equations in the~whole space. 

\subsection{Organization of the paper}

In Section \ref{sec-2}, we introduce the notations, function spaces to be used throughout the paper, along with some useful lemmas for stationary Navier-Stokes solutions. In Section \ref{sec-es1}, we state and prove the first basic estimate for solutions on an arbitrary annulus domain. In Section \ref{sec-es2}, we state and prove the second basic (asymptotic) estimate for a sequence of solutions on enlarging annulus domains. In Section \ref{sec-unif}, we show the uniform bounds for the invading domain solutions $\we_k$  based on the first basic estimate. As a corollary, we prove existence of Leray's solutions $\we_L$. In Section \ref{sec-limit}, we justify \eqref{des-eq} in two scenarios based on the second basic estimate. Finally, in Section \ref{sec-unique}, we state and prove the uniqueness result in a perturbative regime.

\section{Preliminaries} \label{sec-2}

\subsection{Notations} \label{sec-not}
\emph{Throughout the paper, $\mathbf{w}_k$ will be the invading domain solutions to \eqref{NSE-k} for some sequence of radii $R_k \to \infty$.} 

 We use $C$ to denote constants that are independent of $k, R,\lambda,\mathbf{f}$, etc. The exact values of $C$ may change from line to line.

We always assume that $\mathrm{supp} \, \mathbf{f} \subset  B_R$ and $\mathbf{f} \in H^{-1}(B_{2R})$. Definition and some properties of the $H^{-1}$ space are summarized below.

\subsection{Function spaces} \label{sec-fs}

We use standard notations for Sobolev spaces $H^1(\Omega)=W^{1,2}(\Omega)$. Let $\dot{H}_0^1(\Omega)$ be the completion of $C_c^\infty(\Omega)$ in $H^1(\Omega)$ as usual, with norm given by
\begin{equation}\label{sob1}
\|\varphi\|_{\dot{H}_0^1(\Omega)}^2 = \|\nabla \varphi\|^2_{L^2(\Omega)}.
\end{equation}
We recall the following elementary fact:
\begin{lemma}
For any function $\varphi\in H^1(B_R)$ and for every $q\in(1,\infty)$ the inequality
\begin{equation}\label{scale3}
\|\varphi\|_{L^q(B_R)}\le C \biggl(|\bar \varphi(R)| +\|\nabla \varphi\|_{L^2(B_R)}\biggr)R^{2/q}
\end{equation}
holds, where $\bar{\varphi}(R)$ is the average of $\varphi$ on the circle $S_R$, and $C$ is a constant depending only on $q$.
\end{lemma}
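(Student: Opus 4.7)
The plan is to reduce to the unit disk by scaling, and then combine Sobolev embedding with a Poincaré-type inequality using the boundary trace average.

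First I would perform the change of variables $\tilde\varphi(y) := \varphi(Ry)$ for $y\in B_1$. A direct computation gives $\|\varphi\|_{L^q(B_R)} = R^{2/q}\|\tilde\varphi\|_{L^q(B_1)}$, $\|\nabla\tilde\varphi\|_{L^2(B_1)} = \|\nabla\varphi\|_{L^2(B_R)}$ (the factor $R$ from the gradient cancels the $R^{-1}$ from the Jacobian $\sqrt{}$), and $\bar{\tilde\varphi}(1) = \bar\varphi(R)$. Therefore it suffices to establish the inequality on $B_1$, namely
\begin{equation*}
\|\tilde\varphi\|_{L^q(B_1)} \le C\bigl(|\bar{\tilde\varphi}(1)| + \|\nabla\tilde\varphi\|_{L^2(B_1)}\bigr).
\end{equation*}

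For the unit-disk case, I would split $\tilde\varphi = (\tilde\varphi - \bar{\tilde\varphi}(1)) + \bar{\tilde\varphi}(1)$. The constant piece is trivially bounded in $L^q(B_1)$ by $|\bar{\tilde\varphi}(1)|\,|B_1|^{1/q}$. For the zero-trace-average piece, I would invoke the Poincaré-type inequality
\begin{equation*}
\|\tilde\varphi - \bar{\tilde\varphi}(1)\|_{L^2(B_1)} \le C\|\nabla\tilde\varphi\|_{L^2(B_1)},
\end{equation*}
valid on smooth bounded planar domains whenever the subtracted constant kills a nonzero continuous linear functional on $H^1(B_1)$ that vanishes on constants (here, the trace average on $S_1$). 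Together with the gradient estimate this gives a full $H^1$ bound on $\tilde\varphi - \bar{\tilde\varphi}(1)$, and then the two-dimensional Sobolev embedding $H^1(B_1) \hookrightarrow L^q(B_1)$ (valid for every finite $q$) yields $\|\tilde\varphi - \bar{\tilde\varphi}(1)\|_{L^q(B_1)} \le C_q \|\nabla\tilde\varphi\|_{L^2(B_1)}$.

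Combining the two pieces and undoing the rescaling produces the desired inequality with a constant depending only on $q$. I do not anticipate any serious obstacles: the only point that deserves care is the Poincaré inequality with the boundary-trace average rather than the standard volume average. This is classical, but if desired it can be proved in one line by a compactness/contradiction argument, since a minimizing sequence of $\|\nabla\tilde\varphi\|_{L^2}/\|\tilde\varphi - \bar{\tilde\varphi}(1)\|_{L^2}$ would converge in $L^2(B_1)$ to a constant whose boundary average equals itself and must therefore vanish, contradicting normalization.
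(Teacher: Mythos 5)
Your proof is correct and follows essentially the same route as the paper: both reduce to $R=1$ by the scale invariance of the Dirichlet integral in two dimensions, and the paper then simply cites \cite[Theorem~1.1.16]{maz'ya} for the unit-disk estimate, which is exactly the statement you prove explicitly via the trace-average Poincar\'e inequality and the Sobolev embedding $H^1(B_1)\hookrightarrow L^q(B_1)$. Your scaling identities and the compactness argument for the Poincar\'e step are all accurate, so the only difference is that you supply a self-contained proof where the paper gives a reference.
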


\begin{proof}
By scaling, the statement can be reduced to the case $R=1$. Then, the estimate follows from \cite[Theorem~1.1.16]{maz'ya}. 
\end{proof}

The $H^{-1}$ norm of a (scalar valued) distribution $\psi$ in $B_{2R}$ is defined as
\begin{equation}
\|\mathbf{\psi}\|_{H^{-1}(B_{2R})} := \sup_{\varphi \in C_0^\infty(B_{2R}), \, \|\varphi\|_{\dot{H}_0^1(B_{2R}) } = 1 } \langle \psi, \varphi \rangle.
\end{equation} 
Let $\sigma>0$. It is easy to check, that for the function $\varphi_\sigma(z):=\sigma^3\varphi(\sigma z)$ we have the following scaling identity:
\begin{equation}\label{scale1}
\|\varphi_\sigma\|_{H^{-1}(B_{2R/\sigma})}=\sigma\|\varphi\|_{H^{-1}(B_{2R})}.
\end{equation}

Let $\chi$ be a smooth cut-off function satisfying $\chi \equiv 1 \mathrm{\ on\ } B_R$ and $\mathrm{supp} \, \chi \subset B_{2R}$, and define the \emph{total force} $\mathcal{F} := \langle \mathbf{f}, \chi \rangle$. Since $\chi$ can be extended up to the function from $C^\infty_0(B_{2R})$ satisfying $|\nabla\chi|\le C\frac1R$, clearly, we have 
\begin{equation}\label{scale2}
|\mathcal{F}| \le C \|\mathbf{f}\|_{H^{-1}(B_{2R})},
\end{equation}
where $C$ is some universal constant (does not depend on~$R$). 
For simplicity, we will formally write $\mathcal{F} = \int_{\mathbb{R}^2} \mathbf{f}$ although it is possible that $\mathbf{f} \notin L^1_{\mathrm{loc}}$. Next, we state a result in the spirit of \cite[Lemma 3.6]{GuilW2}:
\begin{lemma} \label{lem-tensorF}
Under the above notations, if $\mathcal{F} = 0$, then there exists a tensor $\mathbb{F} \in L^2(\mathbb{R}^2)$ such that $\mathrm{supp}\, \mathbb{F} \subset B_{2R}$, $\mathbf{f} = \nabla \cdot \mathbb{F}$ on $\mathbb{R}^2$, and
$$\|\mathbb{F}\|_{L^2} \le C \|\mathbf{f}\|_{H^{-1}(B_{2R})}$$
with $C$ independent of $R$.
\end{lemma}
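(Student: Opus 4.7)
The plan is to solve $\mathbf{f}=\nabla\cdot\mathbb{F}$ componentwise by applying the Bogovskii (divergence right-inverse) operator to each scalar $f_i$, and then to use the two-dimensional scale invariance of the Dirichlet norm to ensure that the operator norm on $B_{2R}$ is independent of $R$.

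First I would reduce to two independent scalar divergence problems. Writing $\mathbf{f}=(f_1,f_2)$, the assumption $\mathrm{supp}\,\mathbf{f}\subset B_R$ together with $\chi\equiv 1$ on $B_R$ makes the pairing $\langle f_i,\phi\rangle$ for $\phi\in C^\infty(\overline{B_{2R}})$ depend only on $\phi|_{B_R}$; in particular $\langle f_i,1\rangle = \langle f_i,\chi\rangle = \mathcal{F}_i = 0$, which is exactly the compatibility condition needed for the solvability of the scalar problem $\nabla\cdot\mathbf{v}^{(i)} = f_i$ in $B_{2R}$ with $\mathbf{v}^{(i)}$ supported in $\overline{B_{2R}}$. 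Setting $F_{ij}:=(v^{(i)})_j$ will then produce the desired tensor.

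Next I would invoke the classical Bogovskii theorem on the unit disk (as treated in Galdi's book, Section III.3, or used for a similar purpose in \cite[Lemma 3.6]{GuilW2}): it provides a bounded linear map from mean-zero distributions in $H^{-1}(B_1)$ to $L^2(B_1)$ whose image, extended by zero, realizes the data as a distributional divergence on $\mathbb{R}^2$, with a universal constant $C_0$. To transfer this estimate from $B_1$ to $B_{2R}$ with an $R$-independent constant, I would rescale: set $\tilde f_i(y):=(2R)^2 f_i(2Ry)$ on $B_1$, observe via the scaling identity \eqref{scale1} (whose essential ingredient is the conformal invariance of $\|\nabla\phi\|_{L^2}$ in two dimensions) that $\|\tilde f_i\|_{H^{-1}(B_1)}=\|f_i\|_{H^{-1}(B_{2R})}$, apply Bogovskii on $B_1$ to obtain $\tilde{\mathbf{v}}^{(i)}$, and then set $\mathbf{v}^{(i)}(x):=(2R)^{-1}\tilde{\mathbf{v}}^{(i)}(x/(2R))$. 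A direct change of variables shows $\nabla\cdot\mathbf{v}^{(i)}=f_i$ and $\|\mathbf{v}^{(i)}\|_{L^2(B_{2R})}=\|\tilde{\mathbf{v}}^{(i)}\|_{L^2(B_1)}\le C_0\|f_i\|_{H^{-1}(B_{2R})}$; the crucial fact is that the two scaling factors cancel precisely in dimension two.

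Finally, assembling $F_{ij}:=v^{(i)}_j$, extending by zero outside $B_{2R}$, and summing the two componentwise estimates yields $\mathbb{F}\in L^2(\mathbb{R}^2)$ with $\mathrm{supp}\,\mathbb{F}\subset\overline{B_{2R}}$, $\nabla\cdot\mathbb{F}=\mathbf{f}$ on $\mathbb{R}^2$, and $\|\mathbb{F}\|_{L^2}\le C\|\mathbf{f}\|_{H^{-1}(B_{2R})}$ with $C$ universal. The one subtle point, which I expect to be the main (though rather mild) technical obstacle, is to verify that extension by zero truly preserves the distributional divergence on all of $\mathbb{R}^2$; this follows from the vanishing normal trace of the Bogovskii solutions, which is immediate for smooth mean-zero data via integration by parts and extends to $H^{-1}$ data by a density/closure argument.
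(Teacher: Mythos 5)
Your proposal is correct, but it follows a genuinely different route from the paper. The paper's proof is shorter and more self-contained: it solves the vector Neumann problem $\Delta\mathbf{g}=\mathbf{f}$ in $B_{2R}$, $\partial_n\mathbf{g}=0$ on $\partial B_{2R}$ (solvable by Lax--Milgram precisely because $\mathcal{F}=0$), sets $\mathbb{F}=\nabla\mathbf{g}$, and reads off $\|\mathbb{F}\|_{L^2}=\|\nabla\mathbf{g}\|_{L^2}\le C\|\mathbf{f}\|_{H^{-1}(B_{2R})}$ directly from the weak formulation; the validity of $\nabla\cdot\mathbb{F}=\mathbf{f}$ on all of $\mathbb{R}^2$ after zero extension is exactly the Neumann boundary condition, and the $R$-independence of $C$ comes from the same two-dimensional scale invariance of the Dirichlet energy that you exploit. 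You instead solve the two scalar divergence problems $\nabla\cdot\mathbf{v}^{(i)}=f_i$ via the Bogovski\u{\i} operator and assemble $F_{ij}=v^{(i)}_j$. This works, and it correctly identifies both load-bearing points (the compatibility condition $\mathcal{F}_i=0$, which is well defined despite $f_i\in H^{-1}$ only because of the compact support, and the exact cancellation of scaling factors in dimension two). What your route buys is a tensor supported strictly inside $B_{2R}$ (so the zero-extension issue essentially disappears once the support property of Bogovski\u{\i} solutions is secured) and an argument that transfers verbatim to the $W^{-1,q}\to L^q$ scale; what it costs is reliance on the negative-order Bogovski\u{\i} theory, a heavier external input than the paper needs, and it is precisely there --- in justifying that the $H^{-1}\to L^2$ extension still produces fields whose zero extension has the right distributional divergence --- that your ``density/closure argument'' must be carried out with some care (e.g.\ by mollifying $f_i$ within a slightly larger ball so that the approximating Bogovski\u{\i} solutions have uniformly compact support in $B_{2R}$). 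Neither approach is wrong; the paper's is simply the more economical one for the single exponent $q=2$ needed here.
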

\begin{proof}
Let us define $\mathbb{F}$ to be $\nabla \mathbf{g}$ with $\mathbf{g} \in H^1({B_{2R}})$ solving
\begin{equation*} 
\left\{
\begin{aligned}
 & \Delta \mathbf{g} = \mathbf{f},\quad \mathrm{in} \ \ B_{2R},  \\
 & \partial_n \mathbf{g} = 0, \quad \mathrm{on} \ \ \partial B_{2R}.
\end{aligned}
\right.
\end{equation*}
Such $\mathbf{g}$ exists uniquely up to adding constants since we assumed $\mathcal{F} = 0$. We extend $\mathbb{F}$ by $0$ outside $B_{2R}$. Then using $\partial_n \mathbf{g} = 0$ on $\partial B_{2R}$, one can check that $\nabla \cdot \mathbb{F} = \mathbf{f}$ holds in the sense of distributions not only on $B_{2R}$, but also on $\mathbb{R}^2$.
\end{proof}

The following change-of-domain lemma will be useful in Sections \ref{sec-limit} and \ref{sec-unique}.
\begin{lemma} \label{lem-change-of-domain}
Suppose that $\mathrm{supp} \, f \subset B_R$ and $f \in H^{-1}(B_{2R})$. Then for any $a \ge 2$,
\begin{equation}
	\|f\|_{H^{-1}(B_{aR})} \le C \left(1 + \left(\ln \frac{a}{2}\right)^\frac12 \right) \|f\|_{H^{-1}(B_{2R})}
\end{equation}
with $C$ independent of $a$ and $R$. 
If in addition $\mathcal{F}=0$, then
\begin{equation*}
\|f\|_{H^{-1}(B_{aR})} \le C \|f\|_{H^{-1}(B_{2R})}.
\end{equation*}
\end{lemma}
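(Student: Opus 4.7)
The idea is, given a test function $\varphi$ on the large ball $B_{aR}$, to construct a test function $\psi$ on $B_{2R}$ that agrees with $\varphi$ on $\mathrm{supp}\,f$ and whose Dirichlet norm is controlled. \emph{First I would} fix $\varphi\in C_0^\infty(B_{aR})$ with $\|\nabla\varphi\|_{L^2(B_{aR})}=1$, set $c:=\bar\varphi(2R)$ (the mean of $\varphi$ on the circle $S_{2R}$), and pick a radial cutoff $\eta\in C_c^\infty(B_{2R})$ with $\eta\equiv1$ on $B_R$ and $|\nabla\eta|\le C/R$. Because $\mathrm{supp}\,f\subset B_R$, for any constant $c$,
\begin{equation*}
\langle f,\varphi\rangle=\langle f,\varphi-c\rangle+c\,\mathcal{F}=\langle f,\eta(\varphi-c)\rangle+c\,\mathcal{F}.
\end{equation*}
Setting $\psi:=\eta(\varphi-c)\in C_c^\infty(B_{2R})$, the Leibniz rule $\nabla\psi=\eta\,\nabla\varphi+(\varphi-c)\nabla\eta$ combined with a scale-invariant trace Poincaré--Wirtinger inequality on the annulus $A:=B_{2R}\setminus B_R$ (with mean taken on the outer circle $S_{2R}$) yields
\begin{equation*}
\|\nabla\psi\|_{L^2(B_{2R})}\le\|\nabla\varphi\|_{L^2(B_{2R})}+\tfrac{C}{R}\|\varphi-c\|_{L^2(A)}\le C.
\end{equation*}
Hence $|\langle f,\psi\rangle|\le C\|f\|_{H^{-1}(B_{2R})}$, which, under the assumption $\mathcal{F}=0$, already proves the second inequality of the lemma.

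\emph{Next,} to deal with the general case I need to bound $|c|$ by a logarithmic factor. Using that $\varphi$ vanishes on $\partial B_{aR}$, I write $\bar\varphi(2R)=-\int_{2R}^{aR}\partial_r\bar\varphi(r)\,dr$ and apply Cauchy--Schwarz together with Jensen's inequality on each circle:
\begin{equation*}
|c|^2\le\Bigl(\int_{2R}^{aR}\tfrac{dr}{r}\Bigr)\Bigl(\int_{2R}^{aR}r\,|\partial_r\bar\varphi(r)|^2\,dr\Bigr)\le\frac{1}{2\pi}\ln\tfrac{a}{2}\cdot\|\nabla\varphi\|_{L^2(B_{aR}\setminus B_{2R})}^2\le\frac{1}{2\pi}\ln\tfrac{a}{2}.
\end{equation*}
Combined with $|\mathcal{F}|\le C\|f\|_{H^{-1}(B_{2R})}$ from~\eqref{scale2}, the extra term contributes at most $C\,\bigl(\ln(a/2)\bigr)^{1/2}\|f\|_{H^{-1}(B_{2R})}$. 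Adding the two contributions and taking the supremum over admissible $\varphi$ gives the first inequality.

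\emph{The main obstacle} is essentially bookkeeping: verifying that the Poincaré--Wirtinger constant on $A$ can be chosen independently of $R$ (which follows by rescaling $A$ to the fixed annulus $B_2\setminus B_1$ and a standard compactness argument for the functional $u\mapsto\|\nabla u\|_{L^2}+|\bar u(2)|$), and coordinating the single choice $c=\bar\varphi(2R)$ so that it simultaneously cancels the boundary term in the Poincaré inequality on $A$ and serves as the correct starting point for the radial integration over $[2R,aR]$.
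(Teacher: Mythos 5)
Your proof is correct. For the main inequality it follows essentially the same route as the paper: test against a cutoff of $\varphi$, control the resulting $L^2$ term on the annulus by a scale-invariant Poincar\'e inequality anchored at the circle average $\bar\varphi(2R)$, and bound that average by $\bigl(\ln\tfrac a2\bigr)^{1/2}\|\nabla\varphi\|_{L^2}$ using $\bar\varphi(aR)=0$ --- your Cauchy--Schwarz/Jensen computation is precisely the paper's Lemma~\ref{lem-new4} (estimate \eqref{eq-new2.9}), which the paper cites rather than reproves. Where you genuinely diverge is in subtracting the constant $c=\bar\varphi(2R)$ \emph{before} cutting off, so that the logarithmic contribution is isolated in the single term $c\,\mathcal F$, bounded via \eqref{scale2}. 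This buys you the second assertion for free: when $\mathcal F=0$ that term simply vanishes, whereas the paper proves the $\mathcal F=0$ refinement by a separate argument, writing $f=\nabla\cdot\mathbb F$ with $\mathbb F\in L^2$ via Lemma~\ref{lem-tensorF} and using $\|\nabla\cdot\mathbb F\|_{H^{-1}(B_{aR})}\le\|\mathbb F\|_{L^2}$. Your unified treatment is arguably cleaner and avoids the auxiliary Neumann problem behind Lemma~\ref{lem-tensorF}; the paper's route has the minor advantage of not needing the trace--Poincar\'e inequality on the annulus (it uses the ball version \eqref{scale3} already stated). The only point to make explicit is that the pairing $\langle f,\varphi-c\rangle$ must be read as $\langle f,\eta(\varphi-c)\rangle$ from the outset, since $\varphi-c\notin\dot H^1_0(B_{2R})$; you do address this, so the argument stands.
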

\begin{proof}
Fix the cut-off function $\chi$ as above. For any $\varphi \in \dot{H}_0^1(B_{aR})$, we have $\varphi \chi \in \dot{H}_0^1(B_{2R})$ and $\langle f, \varphi \rangle = \langle f, \varphi \chi \rangle$. Notice that
\begin{align*}
\|\varphi \chi\|_{\dot{H}_0^1(B_{2R})} &\le  \|\chi \nabla \varphi\|_{L^2} + \|\varphi \nabla \chi\|_{L^2} \\
&\le \|\varphi\|_{\dot{H}_0^1(B_{aR})} + \frac{C}{R}\|\varphi\|_{L^2(B_{2R})} \\
&\overset{\eqref{scale3}}{\le}  \|\varphi\|_{\dot{H}_0^1(B_{aR})} + C  \bigg(|\bar \varphi(2R)| +\|\nabla \varphi\|_{L^2(B_{2R})} \bigg) \\
&\le C \|\varphi\|_{\dot{H}_0^1(B_{aR})} \left(1 + \left(\ln \frac{a}{2}\right)^\frac12 \right)
\end{align*}
In the last step we applied the estimate \eqref{eq-new2.9} below to control $|\bar \varphi(2R)| = |\bar \varphi(2R) - \bar \varphi(aR) |$.  Now, a standard duality argument proves the lemma.

If in addition $\mathcal{F} = 0$, then the corresponding claim follows from Lemma \ref{lem-tensorF}, and the fact that for any $a \ge 2$, we have
$$\|\nabla \cdot \mathbb{F}\|_{H^{-1}(B_{aR})} \le \|\mathbb{F}\|_{L^2}.$$
\end{proof}

\begin{lemma} \label{lem-new4}
For any $\phi \in H^1({\Omega}_{\rho_1, \rho_2})$, where $\Omega_{\rho_1, \rho_2} = \{z \in \mathbb{R}^2: 0 < \rho_1 <|z| < \rho_2\}$, we have 
\begin{equation} \label{eq-new2.9}
|\bar{\phi}(\rho_2) - \bar{\phi}(\rho_1)| \le \frac{1}{\sqrt{2\pi}} \left(\int_{\Omega_{\rho_1, \rho_2} } |\nabla \phi|^2 \right)^\frac12 \cdot \left(\ln \frac{\rho_2}{\rho_1}\right)^\frac12
\end{equation}
\end{lemma}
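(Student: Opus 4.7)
The plan is a direct Cauchy--Schwarz computation in polar coordinates, combined with a density argument to handle the $H^1$ regularity.

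First I would write the circular average as
\[
\bar\phi(r) = \frac1{2\pi}\int_0^{2\pi}\phi(r\cos\theta,r\sin\theta)\,d\theta,
\]
and, by approximating $\phi$ by smooth functions in $H^1(\Omega_{\rho_1,\rho_2})$ and using the continuity of the trace operator $H^1 \to L^2$ on concentric circles, express
\[
\bar\phi(\rho_2)-\bar\phi(\rho_1) = \int_{\rho_1}^{\rho_2}\frac{d}{dr}\bar\phi(r)\,dr
= \frac{1}{2\pi}\int_{\rho_1}^{\rho_2}\int_0^{2\pi}\partial_r\phi(r,\theta)\,d\theta\,dr.
\]

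Then I would apply Cauchy--Schwarz on the inner $\theta$-integral to obtain
\[
\Big|\int_0^{2\pi}\partial_r\phi\,d\theta\Big|\le \sqrt{2\pi}\Big(\int_0^{2\pi}|\partial_r\phi|^2\,d\theta\Big)^{1/2},
\]
so that
\[
|\bar\phi(\rho_2)-\bar\phi(\rho_1)|\le \frac{1}{\sqrt{2\pi}}\int_{\rho_1}^{\rho_2}\Big(\int_0^{2\pi}|\partial_r\phi|^2\,d\theta\Big)^{1/2}\,dr.
\]
Next, inserting the weight $\sqrt r/\sqrt r$ and applying Cauchy--Schwarz a second time on the $r$-integral gives
\[
|\bar\phi(\rho_2)-\bar\phi(\rho_1)|\le \frac{1}{\sqrt{2\pi}}\Big(\int_{\rho_1}^{\rho_2}\frac{dr}{r}\Big)^{1/2}\Big(\int_{\rho_1}^{\rho_2}\int_0^{2\pi}|\partial_r\phi|^2\,r\,d\theta\,dr\Big)^{1/2}.
\]

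Finally, the first factor equals $(\ln(\rho_2/\rho_1))^{1/2}$, while the second factor is the $L^2$ norm of $\partial_r\phi$ on the annulus (using $r\,d\theta\,dr$ as the area element), which is bounded by $\|\nabla\phi\|_{L^2(\Omega_{\rho_1,\rho_2})}$ since $|\partial_r\phi|\le|\nabla\phi|$ pointwise. This yields exactly \eqref{eq-new2.9}. I do not anticipate any real obstacle; the only subtle point is the density argument validating the fundamental theorem of calculus for circular averages of $H^1$ functions, which is standard and follows by smoothing in an annular neighborhood.
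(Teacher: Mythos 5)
Your proof is correct, and the double Cauchy--Schwarz computation in polar coordinates (with the weight $\sqrt{r}/\sqrt{r}$ producing the logarithmic factor) is exactly the standard argument behind the reference \cite[Lemma 2.1]{KPR} that the paper cites in lieu of a proof; the constants work out to precisely $\frac{1}{\sqrt{2\pi}}\left(\ln\frac{\rho_2}{\rho_1}\right)^{1/2}$. The density/trace remark needed to justify the fundamental theorem of calculus for the circular averages of an $H^1$ function is indeed the only technical point, and it is standard.
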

For the proof of Lemma \ref{lem-new4}, see, \emph{e.g.}, \cite[Lemma 2.1]{KPR}.

\subsection{Properties of $D$-solutions}
We present two important lemmas for general $D$-solutions to the Navier--Stokes equations. They have been very useful in many previous studies on the Navier--Stokes exterior problem, see, for instance, \cite{GW, Amick, KPR, KPR20, KR21}. Lemma \ref{lem:pressure} was proved in \cite[Lemma 4.1]{GW}, and Lemma \ref{lem:angle} was proved in \cite[Theorem~4, page~399]{GW}.  Let $\mathbf{w}$ be a $D$-solution to the Navier--Stokes equations in some ring $\Omega_{r_1, r_2} = \{z \in \mathbb{R}^2: 0 < r_1 <|z| < r_2\}$, and $p$ be the corresponding pressure.

\begin{lemma}[\cite{GW}] \label{lem:pressure}
Denote by $\bar{p}(r)$ the average of $p$ over the circle $S_r$. Then for any $r_1 < \rho_1 \le \rho_2 < r_2$, we have
\begin{equation} \label{eq:pdifference}
|\bar{p}(\rho_2) - \bar{p}(\rho_1)| \le \frac{1}{4\pi} \int_{\Omega_{\rho_1, \rho_2}} |\nabla \mathbf{w}|^2.
\end{equation}
\end{lemma}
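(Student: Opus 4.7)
The plan is to reduce the estimate to a one-dimensional ODE for the circular mean $\bar p(r)$ and then integrate. Since $\mathrm{supp}\,\mathbf{f}\subset B_R$ lies outside the ring $\Omega_{r_1,r_2}$, the equations read $-\Delta\mathbf{w}+(\mathbf{w}\cdot\nabla)\mathbf{w}+\nabla p=0$ there. I would work in polar coordinates $(r,\theta)$ and decompose $\mathbf{w}=v_r\hat r+v_\theta\hat\theta$ into its radial and angular components. The incompressibility relation $\partial_r(rv_r)+\partial_\theta v_\theta=0$, averaged in $\theta$, forces $r\bar v_r(r)$ to equal the constant flux of $\mathbf{w}$, which makes the circular average of the radial Laplacian $(\Delta\mathbf{w})_r=\Delta v_r-v_r/r^2-(2/r^2)\partial_\theta v_\theta$ vanish identically.

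Taking the $\theta$-mean of the radial Navier--Stokes equation, using periodicity in $\theta$ and the identity $\partial_\theta v_\theta=-v_r-r\partial_r v_r$ to integrate the convective term by parts in $\theta$, I would obtain the key one-dimensional identity
\[
\frac{d\bar p}{dr}(r)\;=\;\frac{\overline{v_\theta^2}(r)-\overline{v_r^2}(r)}{r}\;-\;\partial_r\overline{v_r^2}(r).
\]
Integrating over $[\rho_1,\rho_2]$ and rewriting the right-hand side as a single area integral over $\Omega_{\rho_1,\rho_2}$ (via $dr\,d\theta=r^{-1}dV$ and the fundamental theorem of calculus for the boundary difference $\overline{v_r^2}(\rho_2)-\overline{v_r^2}(\rho_1)$) produces a quadratic functional in $v_r,v_\theta$ and their derivatives.

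To match this expression with the Dirichlet integral, I would invoke the polar identity, valid under incompressibility,
\[
|\nabla\mathbf{w}|^2\;=\;2(\partial_r v_r)^2+(\partial_r v_\theta)^2+\tfrac{1}{r^2}(\partial_\theta v_r-v_\theta)^2,
\]
and apply Cauchy--Schwarz on the resulting integrand. The hard part will be securing the \emph{sharp} constant $\frac{1}{4\pi}$: estimating the boundary term and the annular integral separately loses a factor, so the two contributions must be paired before applying Cauchy--Schwarz. An efficient route is to expand $v_r,v_\theta$ in a Fourier series in $\theta$, use incompressibility to express the non-zero Fourier modes of $v_\theta$ in terms of those of $v_r$, and check that the mode-by-mode contributions telescope to yield precisely $\frac{1}{4\pi}\int|\nabla\mathbf{w}|^2$.
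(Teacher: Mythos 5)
The paper offers no proof of this lemma: it is quoted directly from Gilbarg--Weinberger \cite[Lemma~4.1]{GW}, so there is no internal argument to compare yours against. Your first step is correct: averaging the radial momentum equation over $\theta$, using incompressibility to make the averaged viscous term vanish and to integrate the convective term by parts, does yield $\frac{d\bar p}{dr}=\frac{\overline{v_\theta^2}-\overline{v_r^2}}{r}-\partial_r\overline{v_r^2}$. The gap is in the closing step. After integrating you must dominate the quadratic functional
\begin{equation*}
J:=\overline{v_r^2}(\rho_1)-\overline{v_r^2}(\rho_2)+\int_{\rho_1}^{\rho_2}\frac{\overline{v_\theta^2}-\overline{v_r^2}}{r}\,dr
\end{equation*}
by $\frac1{4\pi}\int_{\Omega_{\rho_1,\rho_2}}|\nabla\we|^2=\frac12\int_{\rho_1}^{\rho_2}\overline{|\nabla\we|^2}\,r\,dr$, and you propose to do this by Cauchy--Schwarz and a Fourier decomposition, i.e.\ using only incompressibility. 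That functional inequality is false. Take $v_r\equiv0$, $v_\theta\equiv c$: this field is divergence free, $|\nabla\we|^2=c^2/r^2$, so $J=c^2\ln(\rho_2/\rho_1)$ while the right-hand side equals $\frac{c^2}{2}\ln(\rho_2/\rho_1)$ --- off by a factor of $2$. Since this counterexample consists of a single (zeroth) Fourier mode, the proposed mode-by-mode pairing already fails at the first mode; no amount of algebraic rearrangement of $J$ can recover the constant $\frac1{4\pi}$.

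The obstruction is not merely that you discarded information when averaging: the Taylor--Couette field $v_r=0$, $v_\theta=ar+b/r$ is an exact solution of the Navier--Stokes system in the annulus, and a direct computation gives $\bar p(\rho_2)-\bar p(\rho_1)=\frac{1}{4\pi}\int_{\Omega_{\rho_1,\rho_2}}|\nabla\we|^2+2ab\ln(\rho_2/\rho_1)$, which strictly exceeds the claimed bound whenever $ab>0$. Hence no argument carried out purely inside the annulus --- yours, or any other local manipulation of the equations there --- can establish \eqref{eq:pdifference}. In \cite{GW} the lemma is proved for solutions whose Dirichlet integral is finite over a full exterior domain, and that global hypothesis (which rules out the rigid-rotation component and pins down the behaviour of $r\,\bar p'(r)$ at infinity) enters the proof in an essential way. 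Your proposal never invokes any information from outside $\Omega_{\rho_1,\rho_2}$, so it cannot be completed as written.
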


Denote by $\bar{\mathbf{w}}(r)$ the average of $\mathbf{w}$ over the circle $S_r$. Note, that the direct estimates
for $\bar{\mathbf{w}}(r)$ are not so good:
\begin{equation} \label{eq2a}
|\bar{\mathbf{w}}(\rho_2) - \bar{\mathbf{w}}(\rho_1)| \le \frac{1}{\sqrt{2\pi}} \left(\int_{\Omega_{\rho_1, \rho_2} } |\nabla \mathbf{w}|^2 \right)^\frac12 \cdot \left(\ln \frac{\rho_2}{\rho_1}\right)^\frac12
\end{equation}
(see \eqref{eq-new2.9}). Nevertheless, the direction of $\bar{\mathbf{w}}(r)$ is still under
control of the Dirichlet integral:

\begin{lemma}[\cite{GW}] \label{lem:angle}
  Denote by $\bar{\mathbf{w}}$ the average of $\mathbf{w}$ over the circle $S_r$ and let $\varphi(r) \in [0,2\pi]$ (modulo $2\pi$) be the argument of the complex number associated with the vector $\bar{\mathbf{w}}(r) = (\bar{w}_1(r), \bar{w}_2(r))$, {\emph{i.e.}},
$\bar{\mathbf{w}}(r)=|\bar\we(r)|\,(\cos\varphi(r),\sin\varphi(r))$.  Assume also that $|\bar{\mathbf{w}}(r)| \ge \sigma > 0$ for some constant $\sigma$ and for all $r \in (r_1, r_2)$. Then for any $r_1 < \rho_1 \le \rho_2 < r_2$, we have 
 \begin{equation} \label{eq:phidifference}
  |\varphi(\rho_2) - \varphi(\rho_1)| \le \frac{1}{4\pi \sigma^2} \int_{\Omega_{\rho_1, \rho_2}} \left( \frac{1}{r}|\nabla \omega| + |\nabla \mathbf{w}|^2 \right).
 \end{equation}
 Here $\omega = \partial_2 w_1 - \partial_1 w_2$ is the vorticity of $\mathbf{w}$. 
\end{lemma}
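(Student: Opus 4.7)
The plan is to derive a pointwise identity for $\varphi'(r)$ and integrate it over $[\rho_1,\rho_2]$. Since $\bar w_1 + i\bar w_2 = |\bar{\mathbf{w}}|\,e^{i\varphi}$, a direct computation gives
\begin{equation*}
\varphi'(r) = \frac{\bar w_1(r)\,\bar w_2'(r) - \bar w_2(r)\,\bar w_1'(r)}{|\bar{\mathbf{w}}(r)|^2},
\end{equation*}
and the hypothesis $|\bar{\mathbf{w}}|\ge\sigma$ will let me replace the denominator by $\sigma^2$ at a uniform cost. The whole game then reduces to bounding the numerator by an integral of $\frac{1}{r}|\nabla\omega|+|\nabla\mathbf{w}|^2$ on each circle $S_r$.

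To compute $\bar w_i'(r)$, I would pull $d/dr$ inside the circular integral, write $\partial_r = \cos\theta\,\partial_1+\sin\theta\,\partial_2$, and use divergence-freeness ($\partial_1 w_1 = -\partial_2 w_2$) together with $\partial_2 w_1 = \omega + \partial_1 w_2$ to rearrange each integrand into an $\omega$-term plus a pure $\frac{1}{r}\partial_\theta$-derivative of a component of $\mathbf{w}$; periodicity kills the latter and leaves the kinematic identities
\begin{equation*}
\bar w_1'(r) = \frac{1}{2\pi}\int_0^{2\pi}\sin\theta\,\omega\,d\theta,\qquad \bar w_2'(r) = -\frac{1}{2\pi}\int_0^{2\pi}\cos\theta\,\omega\,d\theta,
\end{equation*}
valid for any divergence-free $\mathbf{w}$. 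Plugging these into the formula for $\varphi'(r)$, the numerator collapses to $-\frac{1}{2\pi}\int_0^{2\pi}(\bar{\mathbf{w}}\cdot\hat r)\,\omega\,d\theta$ with $\hat r=(\cos\theta,\sin\theta)$.

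The crucial step is then the algebraic split $\bar{\mathbf{w}}\cdot\hat r = w_r - (\mathbf{w}-\bar{\mathbf{w}})\cdot\hat r$. The $w_r$-piece is where the momentum equation enters: writing Navier--Stokes in the Bernoulli form $\nabla\Phi = -\nabla^\perp\omega+\omega\,\mathbf{w}^\perp$ (legitimate since $\mathbf{f}\equiv 0$ inside the ring) and extracting the angular component yields an identity of the shape $\frac{1}{r}\partial_\theta\Phi = \pm(\partial_r\omega - \omega w_r)$; integrating over $\theta$ the left-hand side vanishes by single-valuedness of $\Phi$, so $|\int_0^{2\pi} w_r\omega\,d\theta| = 2\pi|\bar\omega'(r)|$. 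This contributes a term $\sim\bar\omega'(r)/|\bar{\mathbf{w}}|^2$ to $\varphi'(r)$ which, via $|\bar\omega'(r)|\le \frac{1}{2\pi r}\int_{S_r}|\nabla\omega|\,dS$ and integration in $r$ with $dA = r\,dr\,d\theta$, produces the $\frac{1}{r}|\nabla\omega|$ term of \eqref{eq:phidifference}. The ``error'' piece $\frac{1}{2\pi}\int((\mathbf{w}-\bar{\mathbf{w}})\cdot\hat r)\omega\,d\theta$ is handled by Cauchy--Schwarz on $S^1$ and the one-dimensional Poincar\'e--Wirtinger inequality $\int_0^{2\pi}|\mathbf{w}-\bar{\mathbf{w}}|^2\,d\theta \le \int_0^{2\pi}|\partial_\theta\mathbf{w}|^2\,d\theta\le r^2\int_0^{2\pi}|\nabla\mathbf{w}|^2\,d\theta$, combined with the pointwise bound $|\omega|\le\sqrt 2\,|\nabla\mathbf{w}|$, producing the $|\nabla\mathbf{w}|^2$ term after integration in $r$.

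The main obstacle, to my mind, is spotting the right split of the integrand: writing $\bar{\mathbf{w}}\cdot\hat r = w_r - (\mathbf{w}-\bar{\mathbf{w}})\cdot\hat r$ is the clever move that isolates a piece controllable via the momentum equation (yielding the clean $\nabla\omega$ contribution) from a remainder controllable purely by Poincar\'e--Wirtinger on circles (yielding the Dirichlet integral). Once this split is in place the remainder of the argument is routine, and matching the exact constant $\frac{1}{4\pi}$ in \eqref{eq:phidifference} is a matter of bookkeeping the Poincar\'e and Cauchy--Schwarz constants rather than any new idea.
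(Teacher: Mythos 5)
The paper does not prove this lemma at all --- it is quoted verbatim from Gilbarg--Weinberger \cite{GW} (Theorem~4 there), so there is no internal proof to compare against. Your reconstruction is essentially the classical Gilbarg--Weinberger argument, and its structure is sound: the kinematic identities for $\bar w_i'(r)$ are correct (I checked the $\partial_r=\cos\theta\,\partial_1+\sin\theta\,\partial_2$ computation and the cancellation of the $\tfrac1r\partial_\theta$-terms), the reduction of the numerator to $-\tfrac{1}{2\pi}\int(\bar{\mathbf w}\cdot\hat r)\,\omega\,d\theta$ is right, the split $\bar{\mathbf w}\cdot\hat r=w_r-(\mathbf w-\bar{\mathbf w})\cdot\hat r$ is indeed the key move, the identity $\int_0^{2\pi}w_r\omega\,d\theta=2\pi\bar\omega'(r)$ follows correctly from the angular component of $\nabla\Phi=-\nabla^\perp\omega+\omega\mathbf w^\perp$ and single-valuedness of $\Phi$, and Cauchy--Schwarz plus Wirtinger handles the fluctuation term.

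The one point I would push back on is your closing claim that the constant $\tfrac{1}{4\pi}$ is ``a matter of bookkeeping.'' Running your estimates with the natural constants, the $w_r$-piece contributes $\tfrac{1}{2\pi\sigma^2}\int_{\Omega_{\rho_1,\rho_2}}\tfrac1r|\nabla\omega|$ (since $|\bar\omega'(r)|\le\tfrac{1}{2\pi}\int_0^{2\pi}|\nabla\omega|\,d\theta$ with no room to spare), and the fluctuation piece contributes $\tfrac{\sqrt2}{2\pi\sigma^2}\int_{\Omega_{\rho_1,\rho_2}}|\nabla\mathbf w|^2$ (Wirtinger with constant $1$, $|\partial_\theta\mathbf w|\le r|\nabla\mathbf w|$, and the sharp pointwise bound $\omega^2\le 2|\nabla\mathbf w|^2$). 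Both exceed the stated $\tfrac{1}{4\pi\sigma^2}$ by a factor between $2$ and $2\sqrt2$, and I do not see how to recover $\tfrac{1}{4\pi}$ without a genuinely sharper ingredient. This is a cosmetic rather than structural defect --- every application of the lemma in the paper (\emph{e.g.}\ in~(\ref{eq-3-9}) and (\ref{2e-eq-3-9})) only uses the estimate up to a universal constant, so a bound of the form $\tfrac{C}{\sigma^2}\int(\tfrac1r|\nabla\omega|+|\nabla\mathbf w|^2)$ suffices everywhere --- but as a proof of the statement \emph{as written} your argument falls short of the claimed constant, and you should either track down how \cite{GW} obtains $\tfrac{1}{4\pi}$ or state the weaker constant you can actually prove.
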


\section{The first basic estimate for the velocity}\label{sec-es1}
We have already mentioned, that the usual estimates for plane functions with finite Dirichlet integral are not efficient to compare the difference of mean values of the function over two circles whose radii are very different (see~(\ref{eq2a})\,). But the special structure of the plane
Navier--Stokes system allows to improve the issue. Here and in the next section we formulate and prove two important estimates for the velocity vector, which will be the main tool for future research in the paper.  Note, that they are valid not only for the solutions in the whole plane (considered above), but in much more general situations. 

\begin{theorem}\label{th:estim1}
Let $\we$ be the $D$-solution to the Navier--Stokes system
 \begin{equation}  \label{NSE-gen} 
\left\{\begin{aligned}
 & \Delta \mathbf{w} - (\mathbf{w} \cdot \nabla) \mathbf{w} - \nabla p = \mathbf{0}, \\
 & \nabla \cdot \mathbf{w} = 0
\end{aligned}
\right.
\end{equation}
 in the annulus type domain $\Omega_{r_1,r_2}=\{z\in\R^2:
 r_1\le|z|\le r_2\}$. Then 
 \begin{equation} \label{eq:estim-m1}
 |\bar\we(r_1)-\bar\we(r_2)|\le C_*(1+\mu)\sqrt{D(r_1,r_2)},
 \end{equation}
 where
 \begin{equation} \label{eq:estim-m2}
\mu=\frac1{r_1\me},\ \ \quad \me:=\max\bigl\{|\bar\we(r_1)|,|\bar\we(r_2)|\bigr\},\qquad D(r_1,r_2):=\int\limits_{\Omega_{r_1,r_2}}|\nabla\we|^2
 \end{equation}
 and $C_*$ is some universal positive constant (does not depend on $\we, r_i$, etc.)
\end{theorem}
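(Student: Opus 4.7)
The plan is to argue by contradiction, combining three ingredients: the pressure control of Lemma \ref{lem:pressure}, the directional control of Lemma \ref{lem:angle}, and the sign-preservation property of the vorticity between level sets of the Bernoulli function $\Phi = p + \tfrac12|\we|^2$ from \cite{KPR20}. Both sides of \eqref{eq:estim-m1} are invariant under the Navier--Stokes rescaling $\we(z)\mapsto \sigma\we(\sigma z)$, $p(z)\mapsto \sigma^2 p(\sigma z)$ (under which $\mu$ is preserved), so I first normalize $m=1$, and then suppose for contradiction that $|\bar\we(r_1)-\bar\we(r_2)|\gg (1+\mu)\sqrt D$; in particular $\sqrt{D}\ll 1$.

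The first step is to show that the Bernoulli averages $\bar\Phi(r_1)$ and $\bar\Phi(r_2)$ are well-separated, namely $|\bar\Phi(r_2)-\bar\Phi(r_1)|\gg m(1+\mu)\sqrt D$. Lemma \ref{lem:pressure} bounds the pressure contribution by $D/(4\pi)$; Poincar\'e/Wirtinger on each circle replaces $\overline{|\we|^2}(r_i)$ by $|\bar\we(r_i)|^2$ with an error absorbable into $(1+\mu)\sqrt D$ (the $\mu$ factor appears because the Poincar\'e constant on $S_r$ scales linearly with $r$). Moreover, Lemma \ref{lem:angle} forces the directions of $\bar\we(r_1),\bar\we(r_2)$ to nearly coincide (once $|\bar\we|$ is shown to stay close to $m$ across the annulus, by subdividing into dyadic pieces if necessary), so that a large $|\bar\we(r_1)-\bar\we(r_2)|$ must come primarily from magnitudes; hence $|\,|\bar\we(r_1)|^2-|\bar\we(r_2)|^2\,|\gtrsim m|\bar\we(r_1)-\bar\we(r_2)|$. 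By continuity and connectedness, I then extract two closed concentric curves $\gamma_i$ near $S_{r_i}$ on which $\Phi$ is constant, with values $c_i$ satisfying $|c_2-c_1|\gg m(1+\mu)\sqrt D$.

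The second step derives the contradiction. Let $A$ be the annular region between $\gamma_1$ and $\gamma_2$. By \cite{KPR20}, $\omega$ keeps a fixed sign on $A$. I then integrate the identity $\nabla\Phi=-\nabla^\perp\omega+\omega\we^\perp$ along a suitable curve $\gamma\subset A$ joining $\gamma_1$ to $\gamma_2$: the first contribution is bounded by $\int|\nabla\omega|\,d\ell\lesssim\sqrt D$ via the Gilbarg--Weinberger line-integral estimate, while the second, because $\omega$ is of one sign and $\we^\perp\cdot d\vec\ell$ has a definite sign (thanks to the small directional variation of $\we$ on $A$), can be controlled in absolute value by H\"older together with $\int_A|\omega|\lesssim \sqrt{|A|}\sqrt D\lesssim r_2\sqrt D$, which after normalization is precisely of order $m(1+\mu)\sqrt D$. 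Summing, $|c_2-c_1|\lesssim m(1+\mu)\sqrt D$, contradicting the previous step and forcing \eqref{eq:estim-m1}.

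The main obstacle will be the quantitative tracking of the factor $(1+\mu)$, particularly in the regime $\mu\gg 1$ (i.e.\ $r_1 m\ll 1$) where the inner radius is small relative to the velocity scale. Lemma \ref{lem:angle} requires a uniform lower bound on $|\bar\we|$, which may need a preliminary decomposition of the annulus; similarly the Poincar\'e and area estimates must be arranged so that the factor emerging is exactly $\mu$ and not some higher power. The sign-preservation of $\omega$ from \cite{KPR20} is taken as a black box, but it is essential: without it, cancellations in $\int\omega\we^\perp\cdot d\vec\ell$ could occur and the scheme would collapse.
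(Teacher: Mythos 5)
Your overall strategy -- contradiction via well-separated level sets of $\Phi=p+\frac12|\we|^2$, the identity $\nabla\Phi=-\nabla^\perp\omega+\omega\we^\perp$, and the sign-preservation of $\omega$ from \cite{KPR20} -- is exactly the paper's, and your Step 1 (reduce a large $|\bar\we(r_1)-\bar\we(r_2)|$ to a large gap in $|\bar\we|^2$ via directional control, then to a gap in $\Phi$ via the pressure lemma) matches Steps 2--3 and 6 of the paper. However, there is a genuine gap at the technical core. Both your use of Lemma \ref{lem:angle} and your line integral of $\nabla^\perp\omega$ require quantitative control of $\nabla\omega$, and the bound you invoke, $\int|\nabla\omega|\,d\ell\lesssim\sqrt D$ ``via the Gilbarg--Weinberger line-integral estimate,'' does not exist in that form: it is not even dimensionally consistent (the correct target is $\int_L|\nabla\omega|\,d\ell\lesssim \me(1+\mu)\sqrt D$, and likewise $\int\frac1r|\nabla\omega|\lesssim\me(1+\mu)\sqrt D$ for Lemma \ref{lem:angle}). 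Producing these bounds is where most of the paper's work goes: one first gets pointwise pressure bounds $|p|\le C\me(1+\mu)\sqrt D$ on a sub-annulus (self-improvement of regularity for the Stokes system plus good circles), then a pointwise bound $|\we|\le C\me$ via the one-sided maximum principle for $\Phi$, then an energy estimate for the vorticity equation $-\Delta\omega+\we\cdot\nabla\omega=0$ on dyadic annuli yielding $\int r|\nabla\omega|^2\le C\me D(1+\mu)$, and only then Cauchy--Schwarz. Without this chain your directional control in Step 1 and your bound on the $\nabla^\perp\omega$ contribution in Step 2 are both unsupported.

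A second, smaller problem is your treatment of the term $\int\omega\,\we^\perp\cdot d\vec\ell$. The paper's point is that, choosing the integration segment $L$ in a direction $\tilde{\mathbf e}$ with $\we^\perp\cdot\tilde{\mathbf e}<0$ on $L$ (possible because the direction of $\we$ is nearly constant), this term has a definite \emph{favorable} sign and is simply discarded. You note the sign-definiteness but then try to bound the term in absolute value by $\int_A|\omega|\lesssim\sqrt{|A|}\sqrt D\lesssim r_2\sqrt D$; this is not of order $\me(1+\mu)\sqrt D$ (with $\me=1$ normalized, $r_2$ is unbounded while $1+\mu=1+1/r_1$), and an area integral does not bound a line integral anyway. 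If the sign is favorable no bound is needed; if it were unfavorable, no bound of the required order is available and the whole scheme would collapse. You should drop the absolute-value estimate and use the sign outright. Finally, note that the paper first proves the scalar estimate for $\bigl||\bar\we(r_2)|-|\bar\we(r_1)|\bigr|$ as a lemma and then upgrades to the vector estimate using the smallness of the angular variation $\varphi(r)$; your proposal fuses these, which is fine in principle but makes the circularity between ``directions nearly coincide'' and ``magnitudes nearly equal'' harder to untangle.
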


In order to prove the theorem, first of all we need to obtain the corresponding estimate for the {\it absolute value} of the velocity.

\begin{lemma}\label{lm:estim1}
Under conditions and notation of Theorem~\ref{th:estim1} the estimate 
 \begin{equation} \label{eq:estim-ab1}
 \biggl| |\bar\we(r_2)|-|\bar\we(r_1)|\biggr|\le C_0(1+\mu)\sqrt{D(r_1,r_2)},
 \end{equation}
holds with some universal positive constant $C_0$  (not depending on $\we, r_i$, etc.)
\end{lemma}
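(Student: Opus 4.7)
My plan is to argue by contradiction, following the sketch in the introduction. Suppose the estimate fails: for an arbitrarily large constant $N$ one can find a $D$-solution and radii $r_1<r_2$ with
\[
\bigl||\bar\we(r_2)|-|\bar\we(r_1)|\bigr|>N(1+\mu)\sqrt{D(r_1,r_2)}.
\]
After possibly swapping the two radii we may assume $\me=|\bar\we(r_2)|$. The argument then proceeds in three stages.

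Stage~1 (translation to the Bernoulli pressure). Set $\Phi=p+\tfrac12|\we|^2$. By Lemma~\ref{lem:pressure} the mean pressure satisfies $|\bar p(r_2)-\bar p(r_1)|\le\tfrac{1}{4\pi}D$, while Jensen's inequality combined with a standard trace estimate on circles lets one compare $\overline{|\we|^2}(r_i)$ with $|\bar\we(r_i)|^2$ up to an error controlled by the local Dirichlet integral. Combining these gives
\[
|\bar\Phi(r_2)-\bar\Phi(r_1)|\ge c\,\me\bigl||\bar\we(r_2)|-|\bar\we(r_1)|\bigr|-CD,
\]
which under the contradiction hypothesis is at least $c'N\me(1+\mu)\sqrt{D}$. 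For $N$ large this mean-value gap dwarfs the circular oscillation of $\Phi$ on $S_{r_1}$ and $S_{r_2}$, and a Kronrod/Morse-type selection produces two level values $t_1<t_2$ with $t_2-t_1\ge c''N\me(1+\mu)\sqrt{D}$ such that each of $\{\Phi=t_i\}$ contains a simple closed component surrounding the origin and lying in $\Omega_{r_1,r_2}$.

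Stage~2 (direction of $\nabla\Phi$). Using the identity $\nabla\Phi=-\nabla^\bot\omega+\omega\we^\bot$, with $\omega=\partial_2w_1-\partial_1w_2$, the $\nabla^\bot\omega$ piece is controlled in a line-integral sense by $\sqrt{D}$ through the classical Gilbarg--Weinberger estimate. For $\omega\we^\bot$ the crucial ingredient is the sign-preservation result of \cite{KPR20} sharpening Amick's ideas: between any two concentric closed level components of $\Phi$, $\omega$ has a fixed sign. Hence $\omega\we^\bot$ has essentially the direction of $\we^\bot$, which by Lemma~\ref{lem:angle} is nearly constant on subannuli where $|\bar\we|$ is bounded below. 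Combining the two contributions, the direction of $\nabla\Phi$ is almost constant throughout the region between the two $\Phi$-level components. But $\nabla\Phi$ is perpendicular to $\{\Phi=t\}$, so if both components encircle the origin its direction must rotate by $\pm 2\pi$ along any transversal circle, a contradiction.

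The main obstacle lies in Stage~1 and in absorbing the $1/\sigma^2$ loss from Lemma~\ref{lem:angle} that is responsible for the $\mu$ factor. One must split cases according to whether $|\bar\we(r)|$ remains of order $\me$ throughout $[r_1,r_2]$ (where the direction control applies with $\sigma\sim\me$ and no $\mu$ is needed) or drops significantly on some subannulus (where one passes to a maximal subannulus on which $|\bar\we|$ stays above a fraction of $\me$ and then exploits the natural scale invariance of~\eqref{NSE-gen} to reduce to the normalized case $r_1\me\sim 1$). Meshing this case analysis with the Kronrod selection of level curves, and bookkeeping all constants in $\me$, $r_1$, $D$ so that the final inequality takes exactly the form $C_0(1+\mu)\sqrt{D}$, is where the bulk of the technical work resides.
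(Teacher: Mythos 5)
Your overall strategy --- arguing by contradiction via a gap between two level sets of the Bernoulli pressure $\Phi=p+\frac12|\we|^2$, the sign-preservation of $\omega$ between $\Phi$-level sets from \cite{KPR20}, and the direction control of Lemma~\ref{lem:angle} --- is the same as the paper's, and your case reductions (passing to a maximal subannulus where $|\bar\we|$ stays comparable to $\me$, invoking scale invariance to handle the $\mu$ factor) match the paper's preliminary reductions. The first genuine gap is that the quantitative backbone is missing. To produce circles on which $\Phi$ is \emph{pointwise} (not merely in mean) separated by roughly $\me(\me-\me_0)$, and to make the $\nabla^\perp\omega$ line integrals and Lemma~\ref{lem:angle} usable, the paper needs the interior bound $|\we|\le C\me$ (via good circles, the pressure estimates, and the one-sided maximum principle for $\Phi$) and, crucially, the improved weighted vorticity estimate $\int r|\nabla\omega|^2\le C\me D(1+\mu)$, obtained from an energy estimate for the vorticity equation $-\Delta\omega+\we\cdot\nabla\omega=0$ on dyadic good annuli (Steps 1--5 of the paper). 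Your claim that the line integrals of $\nabla\omega$ are ``controlled in a line-integral sense by $\sqrt D$ through the classical Gilbarg--Weinberger estimate'' is not correct as stated: the bound actually attainable is $\int\frac1r|\nabla\omega|\le C\me(1+\mu)\sqrt D$, and only after the $|\we|\le C\me$ bound and the vorticity energy estimate; without it neither Lemma~\ref{lem:angle} nor the smallness of $\int_L|\nabla\omega|$ along the chosen segment can be established, and the whole Stage~2 collapses.

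The second gap is the closing contradiction itself. You conclude that the direction of $\nabla\Phi$ is ``almost constant'' throughout the region between the two level components and then invoke the fact that the normal to a closed level curve encircling the origin must wind by $\pm2\pi$. But the available estimates control $\nabla\omega$ only in an integrated sense along selected curves and the direction of $\we$ only through its circular averages; they give no pointwise control of the direction of $\nabla\Phi$, which may moreover vanish. The paper's mechanism is different and avoids this: it selects a single radial segment $L$ in a direction $\tilde{\mathbf e}$ joining the two level curves, on which $\we^\perp\cdot\tilde{\mathbf e}<0$ and $\int_L|\nabla\omega|$ is small, and writes $t''-t'=\int_{[A,B]}\nabla\Phi\cdot\tilde{\mathbf e}\,ds$; the $-\nabla^\perp\omega$ contribution is bounded by the line integral, while the $\omega\we^\perp\cdot\tilde{\mathbf e}$ contribution has a definite negative sign thanks to the sign-preservation of $\omega$. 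This forces $t''-t'\le 2C\me(1+\mu)\sqrt D$, contradicting the assumed gap. That sign argument along one transversal is the essential point, and your winding-number version does not replace it.
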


\begin{proof} 
If $|\bar\we(r_2)|=|\bar\we(r_1)|$, we have zero in the left hand side, and there is nothing to prove. For definiteness, assume that $$|\bar\we(r_2)|>|\bar\we(r_1)|$$ (in the case of opposite inequality the~arguments are the same). Let us make several simplifications. First of all, it is sufficient to consider the case
 \begin{equation} \label{f-add1}
\me=|\bar\we(r_2)|=\max\limits_{r\in[r_1,r_2]}|\bar\we(r)|.
 \end{equation}
  Indeed, if the last assumption is not fulfilled, we can take 
 $$r'_2\in[r_1,r_2]\ :\ |\bar\we(r'_2)|=\max\limits_{r\in[r_1,r_2]}|\bar\we(r)|,$$
 and then  consider the interval $[r_1,r'_2]$ instead of $[r_1,r_2]$. 
 By construction, 
 $$\!\!\!D(r_1,r'_2)<D(r_1,r_2),\quad|\bar\we(r_2)|-|\bar\we(r_1)|< |\bar\we(r'_2)|-|\bar\we(r_1)|, \quad\mu(r_1,r'_2)<\mu(r_1,r_2).$$
 So if we prove the required estimate (\ref{eq:estim-ab1}) for $r_1,r'_2$, then it implies the required estimate for initial pair $r_1,r_2$, etc. So below we assume that~(\ref{f-add1}) is fulfilled. 

Further, it is sufficient to consider only the case when
  \begin{equation} \label{f-add2}
|\bar\we(r)|\ge\frac16\me\qquad\forall r\in[r_1,r_2].
 \end{equation}
 Indeed, if the last assumption is not fulfilled, we can take 
 $$r'_1=\max\biggl\{r\in[r_1,r_2]:|\bar\we(r)|\le\frac16\me\biggr\},$$
 and then  consider the interval $[r'_1,r_2]$ instead of $[r_1,r_2]$. 
 By construction, 
  \begin{equation} \label{f-add3}
|\bar\we(r)|\ge\frac16\me\qquad\forall r\in[r'_1,r_2],
 \end{equation}
 $$D(r'_1,r_2)<D(r_1,r_2),\qquad|\bar\we(r_2)|-|\bar\we(r_1)|\le \me=\frac65\biggl(|\bar\we(r_2)|-|\bar\we(r'_1)|\biggr).$$
 So if we prove the required estimate (\ref{eq:estim-ab1}) for $r'_1,r_2$, then it implies the required estimate for initial pair $r_1,r_2$, etc. So below we assume that~(\ref{f-add2}) is fulfilled. 
 
 Now take and fix some small number $\e_0\in(0,\frac1{100})$ (the exact value of $\e_0$ will be specified below). Obviously, it is sufficient to consider only the case when
 \begin{equation} \label{f-add4}
(1+\mu)\sqrt{D(r_1,r_2)}\le \e_0\me.
 \end{equation}
 Indeed, if the opposite inequality were valid, then the required estimate~(\ref{eq:estim-ab1}) is fulfilled automatically with the fixed constant $C_0=\frac1{\e_0}$, and again there are nothing to prove. So below we assume that~(\ref{f-add4}) is fulfilled as well. 
 
Also we can assume without loss of generality that 
$$r_2>1000 r_1$$
(otherwise the required estimate (\ref{eq:estim-ab1}) follows from~(\ref{eq2a})\,). 
Further proof splits into  several steps. Our general strategy is as follows: first of all, we obtain some uniform estimates for pressure, Bernoulli pressure $\Phi=p+\frac12|\we|^2$ and velocity in the suitable annulus type domain; then we deduce some improved estimates for the integral $\int|\nabla\omega|^2$, and finally we obtain the required estimate~(\ref{eq:estim-ab1}) using contradiction argument and level sets of Bernoulli pressure. 
 
 \medskip
 
 \textsc{Step 1.} Using standard estimates (``self-improvement of regularity'') for stationary Stokes system, one can prove that for any $\rho\in[r_1,\frac15r_2]$
  \begin{equation} \label{eq-per1}
\biggl(\int\limits_{\frac32\rho\le r\le\frac92\rho} |\nabla^2\we|^\frac43+|\nabla p|^\frac43\biggr)^{\frac34}
\le C \,m\,\rho^\frac12 D(\rho,5\rho)^\frac12(1+\mu)
 \end{equation} 
 (see, e.g., \cite[Lemma~8 and Appendix~II]{KR21} for the detailed calculations concerning this estimate\,).
 
 \medskip 
 
 \textsc{Step 2.} Now we would like to obtain the uniform estimates for pressure. Using (\ref{eq-per1}), it is easy to see, that 
there exist two ``good'' radii ${\tilde r}_1\in (\frac32r_1,2r_1)$, ${\tilde r}_2\in(\frac12r_2,\frac9{10}r_2)$ such that for $i=1,2$ and  for $D:=D(r_1,r_2)$ we have 
\begin{equation} \label{per-eq3b}
\left\{
\begin{aligned}
&\int_{S_{{\tilde r}_i}} |\nabla \mathbf{w}|^2 ds\le \frac{C D}{{\tilde r}_i}, \\
&\int_{S_{{\tilde r}_i}} |\nabla \mathbf{w}| ds \le (2\pi {\tilde r}_i)^\frac12 \left( \int_{S_{{\tilde r}_i}} |\nabla \mathbf{w}|^2 ds \right)^\frac12  \le C D^\frac12, \\
& \int_{S_{{\tilde r}_i}} |\nabla p|^{\frac43} ds\le C \frac1{{{\tilde r}_i}^\frac13}\,m^\frac43\,D^\frac23(1+\mu)^\frac43, \\
&\int_{S_{{\tilde r}_i}} |\nabla p| ds \le (2\pi {\tilde r}_i)^\frac14 \left( \int_{S_{{\tilde r}_i}} |\nabla p|^\frac43 ds \right)^\frac34  \le C \,m\,D^\frac12(1+\mu).
\end{aligned}
\right.
\end{equation}
Without loss of generality we may assume that $\bar p({\tilde r}_1)=0$. Then by (\ref{eq:pdifference}) we have 
\begin{equation} \label{eq-pr-mean}
|\bar p(r)|\le C D\le C\me\sqrt{D}\qquad\mbox{ for any }r\in[r_1,r_2],
\end{equation} 
thus the estimates~(\ref{per-eq3b}${}_4$) and the trivial inequality
$$|p(z)|\le\bar p(r)+\int\limits_{S_r}|\nabla p|\,ds\qquad\ \forall z\in S_r,$$
imply the corresponding pointwise bound for the pressure:
\begin{equation} \label{per-eq-3-5}
|p|\le C\me (1+\mu) \sqrt{D}\mathrm{\ \ \ \ on\  } S_{{\tilde r}_i}, \ i=1,2.
\end{equation}

From the main estimate~(\ref{eq:pdifference}), using ``good circles'' technique, it is very easy to deduce from (\ref{eq-per1}) and  (\ref{per-eq-3-5}), that  
\begin{equation} \label{per-eq-p1}
|p|\le C\me (1+\mu) \sqrt{D}\mathrm{\ \ \ \ in\  \ } \Omega\bigl(6r_1,\frac16 r_2\bigr)
\end{equation}
(see Appendix~\ref{appendix-1} for details). 

\medskip 
 
 \textsc{Step 3.} Denote $\me_0=|\bar\we(r_1)|$. Recall, that $\me=|\bar\we(r_2)|>m_0$. From (\ref{eq2a}) we have
 \begin{equation} \label{p-corr1}
|\bar\we(r_i)|-C\sqrt{D}\le|\bar\we(\tilde r_i)|\le |\bar\we(r_i)|+C\sqrt{D}, \ i=1,2.
\end{equation}
  In particular,
 \begin{equation} \label{p-corr2}
|\bar\we(\tilde r_1)|\le m_0+C\sqrt{D},
\end{equation}
\begin{equation} \label{p-corr3}
|\bar\we(\tilde r_1)|^2\overset{(\ref{f-add4})}\le m_0^2+C m\sqrt{D}.
\end{equation}
Analogously,
\begin{equation} \label{p-corr5}
m-C\sqrt{D}\le|\bar\we(\tilde r_2)|\le m+C\sqrt{D},
\end{equation}
\begin{equation} \label{p-corr6}
m^2-Cm\sqrt{D}\le|\bar\we(\tilde r_2)|^2\le m^2+C m\sqrt{D}.
\end{equation}
Furthermore,
 \begin{equation} \label{p-corr7}
|\we|\le|\bar\we(\tilde r_1)|  +\int\limits_{S_{\tilde r_1}}|\nabla\we|\,ds
\overset{(\ref{per-eq3b}_2)}\le 
m_0+C\sqrt{D} \mathrm{\ \ \ \ on\  } S_{{\tilde r}_1},
\end{equation}
\begin{equation} \label{p-corr8}
|\we|^2\overset{(\ref{f-add4})}\le m_0^2+C m\sqrt{D} \mathrm{\ \ \ \ on\  } S_{{\tilde r}_1}.
\end{equation}
Analogously,
\begin{equation} \label{p-corr9}
m-C\sqrt{D}\le|\we|\le m+C\sqrt{D}\mathrm{\ \ \ \ on\  } S_{{\tilde r}_2},
\end{equation}
\begin{equation} \label{p-corr10}
m^2-Cm\sqrt{D}\le|\we|^2\le m^2+C m\sqrt{D}\mathrm{\ \ \ \ on\  } S_{{\tilde r}_2}.
\end{equation}
Estimates (\ref{per-eq-3-5}), (\ref{p-corr8}), (\ref{p-corr10})
 imply the following bound for the Bernoulli pressure $\Phi=p+\frac12|\we|^2$\ \,:
\begin{equation} \label{per-eq-3-6}
\Phi\le \frac12\me_0^2+C\me (1+\mu) \sqrt{D}\mathrm{\ \ \ \ on \  } S_{{\tilde r}_1},
\end{equation}
\begin{equation} \label{per-eq-3-6-}
\Phi\le \frac12\me^2+C\me (1+\mu) \sqrt{D}\mathrm{\ \ \ \ on \  } S_{{\tilde r}_2},
\end{equation}
\begin{equation} \label{per-eq-3-7}
\Phi\ge \frac12\me^2-C\me (1+\mu) \sqrt{D}\mathrm{\ \ \ \ on \  } S_{{\tilde r}_2}
\end{equation}
for some universal constant~$C$ (does not depend on~$\we,D,\me,\mu$, etc.). 
By one-sided maximum principle for the Bernoulli pressure (see, e.g., \cite{GW}\,),
\begin{equation} \label{per-eq-3-6+}
\Phi\le \frac12\me^2+C\me (1+\mu) \sqrt{D}\mathrm{\ \ \ \ in \  } \Omega\bigl({\tilde r}_1,{\tilde r}_2\bigr).
\end{equation}
In particular, by (\ref{per-eq-p1})
\begin{equation} \label{per-eq-be}
|\we|^2\le \me^2+C\me (1+\mu) \sqrt{D}\overset{(\ref{f-add4})}\le
C\,m^2 \mathrm{\ \ \ \ in \  } \Omega\bigl(6r_1,\frac16 r_2\bigr),
\end{equation}
in other words,
\begin{equation} \label{per-eq-be2}
|\we|\le C\,\me\mathrm{\ \ \ \ in \  } \Omega\bigl(6r_1,\frac16 r_2\bigr).
\end{equation}
The last inequality gives us the possibility to obtain the improved estimates for~$\nabla\omega$.

\medskip 
 
 \textsc{Step 4.}  We prove that for any $\rho\in[6r_1,\frac1{30}r_2]$
 \begin{equation} \label{f-add5}
\int\limits_{2\rho\le r\le4\rho} |\nabla \omega|^2 \le C\frac1{\rho}\me\,D(\rho,5\rho)(1+\mu).
 \end{equation}
Indeed, for $D_\rho:=D(\rho,5\rho)$ there exist two ``good'' radii $\rho_1\in (\rho,2\rho)$, $\rho_2\in(4\rho,5\rho)$ such that for $i=1,2$, we have 
\begin{equation} \label{eq3b}
\int_{S_{\rho_i}} \omega^2 ds  \le  \frac{C D_\rho}{\rho},
\end{equation}
and
\begin{equation} \label{eq3c}
\left[ \partial_r \int_{S_{r}} \omega^2 ds \right]_{r=\rho_2}
 \le \frac{C D_\rho}{\rho^2},\qquad\qquad -\left[ \partial_r \int_{S_{r}} \omega^2 ds \right]_{r=\rho_1}\le \frac{C D_\rho}{\rho^2},
\end{equation}
The proof of~(\ref{eq3b})--(\ref{eq3c}) is based on some elementary real analysis and is given in the Appendix~\ref{appendix-2} for convenience.

Note that \eqref{per-eq-be2} imply 
\begin{equation} \label{eq3de}
\max\limits_{z\in S_{\rho_i}} |\we(z)|\le C\,\me.
\end{equation}
It is well-known that $\omega$ satisfies the~vorticity equation
\begin{equation} \label{eq:vorticity}
-\Delta \omega + \mathbf{w} \cdot \nabla \omega = 0.
\end{equation}
A standard energy estimate in the domain $\{ \rho_1 \le r \le \rho_2 \}$ for the above equation, together with the bounds \eqref{eq3b}--\eqref{eq3de}, gives
\begin{align*}
\int_{\rho_1 \le r \le \rho_2} |\nabla \omega|^2 &= \int_{S_{\rho_2}} \left(\omega \partial_r \omega  - \mathbf{w} \cdot \mathbf{e}_r \frac{\omega^2}{2}\right) ds \\
&\quad - \int_{S_{\rho_1}} \left(\omega \partial_r \omega  - \mathbf{w} \cdot \mathbf{e}_r \frac{\omega^2}{2}\right) ds \\
&\le C \frac{1}{\rho} D_\rho\biggl(\me+\frac1\rho\biggr)\le C \frac{1}{\rho}\me D_\rho(1+\mu).
\end{align*}

\medskip

 \textsc{Step 5.} Applying (\ref{f-add5}) for $\rho=6r_1$, $12r_1$,\dots, etc., we obtain finally that 
 \begin{equation} \label{f-add8}
\int\limits_{12r_1\le r\le\frac18r_2} r|\nabla \omega|^2 \le C\me D(1+\mu).
 \end{equation}
 
 \medskip

 \textsc{Step 6.}  We need a pair of ``good circles'' with the same properties as in Step~2--3, but now inside the annulus $\Omega(12r_1,\frac18r_2)$ (in order to use estimates~(\ref{f-add8})\,). So, repeating the arguments of these steps, we obtain, that there exists $\tilde\rho_1\in\bigl(12r_1,24 r_1\bigr)$, \ 
 $\tilde\rho_2\in\bigl(\frac1{16}r_2,\frac18r_2\bigr)$ such that
 \begin{equation} \label{eq-3-6}
\Phi\le \frac12\me_0^2+C\me (1+\mu) \sqrt{D}\mathrm{\ \ \ \ on \  } S_{{\tilde \rho}_1},
\end{equation}
\begin{equation} \label{eq-3-7}
\Phi\ge \frac12\me^2-C\me (1+\mu) \sqrt{D}\mathrm{\ \ \ \ on \  } S_{{\tilde \rho}_2}
\end{equation}
for some universal constant~$C$ (does not depend on~$\we,D,\me,\mu$, etc.). 
 Therefore,
\begin{equation} \label{eq-3-8}
\min\limits_{z\in S_{\tilde\rho_2}} \Phi(z)\ge \max\limits_{z\in S_{\tilde\rho_1}} \Phi(z)+\frac12\biggl(\me^2-\me_0^2-C\me (1+\mu) \sqrt{D}\biggr)
\end{equation}

Let $\varphi(r)$ be the direction of the vector $\bar{\mathbf{w}}(r) = (\bar{w}_1(r), \bar{w}_2(r))$, {\emph{i.e.}}, 
$\bar{\mathbf{w}}(r)=|\bar{\mathbf w}(r)|\,(\cos\varphi(r),\sin\varphi(r)).$  Without loss of generality we may assume, that  $\varphi(\tilde\rho_2)=0$. 
Then from the inequality~(\ref{f-add8}) we have
\begin{equation} \label{eq-3-8'}
\int\limits_{12r_1\le r\le\frac18r_2} \frac1r|\nabla \omega| \le \biggl(C\me D(1+\mu)\biggr)^{\frac12}\sqrt{\frac\pi{6 r_1}}\le C \me (1+\mu) \sqrt{D},
 \end{equation}
consequently, from the formula~(\ref{eq:phidifference}) and assumptions~(\ref {f-add2}), (\ref {f-add4}) we obtain that 
 \begin{equation} \label{eq-3-9}
|\varphi(r)|\le C'\e_0\qquad\forall r\in[\tilde\rho_1,\tilde\rho_2]
\end{equation}
 with some universal constant~$C'$ (does not depend on~$\we,D,\me,\mu$, etc.).
 
Without loss of generality we may assume that all the constants $C$ in the inequalities (\ref{eq-3-8})--(\ref{eq-3-8'}) are the same. Now we are ready for the key step of the proof. 
 
\medskip
  
\textsc{Step 7.} We claim that the inequality 
\begin{equation} \label{eq-3-10}
\me-\me_0\le 5C(1+\mu)\sqrt{D}
\end{equation}
holds for sufficiently small~$\e_0$, where $C$ is the same as in~(\ref{eq-3-8})--(\ref{eq-3-8'}). 

Indeed, suppose this claim fails, than 
\begin{equation} \label{eq-3-11}
\me-\me_0> 5C(1+\mu)\sqrt{D},
\end{equation}
in particular, from~(\ref{eq-3-8}) we have 
\begin{equation} \label{eq-3-12}
\min\limits_{z\in S_{\tilde\rho_2}} \Phi(z)> \max\limits_{z\in S_{\tilde\rho_1}} \Phi(z)+2C\me (1+\mu) \sqrt{D}.
\end{equation}
Now, we are in a position to apply the methods of \cite{KPR20} based on level set structures of $\Phi$ to obtain a contradiction when $\varepsilon_0$ is sufficiently small. For a reader's convenience, we recall the main ideas of the proof in~\cite{KPR20} adapted for the present paper. 

From (\ref{eq-3-12}) it follows immediately, that there are two closed regular level sets $S'_{k}$ and $S''_{k}$ of $\Phi$ 
such that:

\begin{enumerate}
\item[(i)] \ $\Phi|_{S'}\equiv t'$, \ \ \ \ \ $\Phi|_{S''}\equiv t''$, \ \ \ \ $t''-t'>2C\me (1+\mu) \sqrt{D}$;
\item[(ii)] \qquad $S'$, $S''$ \ are smooth closed curves (homeomorphic to the circle) surrounding the origin, both of them lie between circles $S_{\tilde\rho_1}$ and $S_{\tilde\rho_2}$.  
\item[(iii)] \qquad the vorticity $\omega(z)$ does not change sign between the curves $S'$, $S''$; for definiteness, we can assume without loss of generality that 
\begin{equation}\label{eq-3-13}
\omega(z)>0\qquad\mbox{ for all $z$ between  $S'$ and $S''$}
\end{equation}
\end{enumerate}
(for the last assertion, see \cite[Step~6]{KPR20}\,). 

Using (\ref{eq-3-9}), it is not difficult to prove for $\e_0$ small enough, that there exists a unit vector $\tilde{\mathbf e}=(\cos\tilde\theta,\sin\tilde\theta)$ such that the segment 
$L=\{\rho\tilde{\mathbf e}:\rho\in[\tilde\rho_1,\tilde\rho_2]\}$ satisfies the following properties: 
\begin{enumerate}
\item[(iv)] \ ${\mathbf w}^\bot(z)\cdot\tilde{\mathbf e}<0$ for any $z\in L$, where we denote $(a,b)^\bot=(-b,a)$;  
\item[(v)] \ $\int\limits_L|\nabla\omega|\,ds< 2C\me (1+\mu) \sqrt{D}$.  
\end{enumerate}
Now take two points $A\in L\cap S'$ \ and \ $B\in L\cap S''$ such that the line segment $[A,B]$ lies between the curves $S'$ and $S''$. 
Recall, that the gradient of the Bernoulli pressure satisfies the identity
$$\nabla\Phi\equiv-\nabla^\bot\omega+\omega\cdot{\mathbf w}^\bot.$$
Then we have 
\begin{equation}
\label{en25}
\begin{array}{c}
t''-t'=\Phi(B)-\Phi(A)=\int\limits_{[A,B]}\nabla\Phi\cdot{\tilde{\mathbf e}}\,ds\\[12pt]
=-\int\limits_{[A,B]}\nabla^\bot\omega\cdot{\tilde{\mathbf e}}\,ds+
\int\limits_{[A,B]}\omega{\mathbf w}^\bot\cdot{\tilde{\mathbf e}}\,dr:=I+II.
\end{array}
\end{equation}
Estimate the~terms~$I$ and $II$ separately. From the above property (v) we have
\begin{equation}
\label{en26}
I< 2C\me (1+\mu) \sqrt{D}.
\end{equation}
On the other hand, from (iii)--(iv) we obtain
\begin{equation}
\label{en27}
II<0.
\end{equation}
Therefore, 
$$t''-t'\le2C\me (1+\mu) \sqrt{D},$$
a contradiction with~(i). 

Of course, the above items (i)--(v) are only short description. In case of interest, a reader can find a detailed justification for all these steps in~\cite{KPR20}.

\medskip

The proof of  Lemma~\ref{lm:estim1} is finished completely. 
\end{proof}

\medskip

{\it Proof of Theorem~\ref{th:estim1}}.  \ We start from the same simplifications as in the proof of Lemma~\ref{lm:estim1}. For definiteness, assume that $|\we(r_2)|\ge|\we(r_1)|$. 
Because of the triangle inequality, it is sufficient to consider the case
 \begin{equation} \label{fg-add1}
\me=|\bar\we(r_2)|=\max\limits_{r\in[r_1,r_2]}|\bar\we(r)|.
 \end{equation}
Also, it is sufficient to consider only the situation when
  \begin{equation} \label{fg-add2}
|\bar\we(r)|\ge\frac16\me\qquad\forall r\in[r_1,r_2]
 \end{equation}
  \begin{equation} \label{fg-add4}
\sqrt{D(r_1,r_2)}=\sqrt{D}\le \frac1{100}\me.
 \end{equation}
(see the explanation in the proof of Lemma~\ref{lm:estim1}). Further we have to repeat all the arguments of the proof of Lemma~\ref{lm:estim1} up to inequality~(\ref{eq-3-8'}). 
Then from the formulas~(\ref{eq:phidifference}), (\ref{eq-3-8'}), and assumptions~(\ref {fg-add2}), (\ref {fg-add4}) we obtain that 
 \begin{equation} \label{g-eq-3-9}
|\varphi(r)|\le C\frac1{\me^2}\biggl(D+\me(1+\mu)\sqrt{D}\biggr)\le C'\frac1{\me}(1+\mu)\sqrt{D}\qquad\forall r\in[\tilde\rho_1,\tilde\rho_2].
\end{equation}
The last formula and (\ref{eq:estim-ab1}) imply easily
\begin{equation} \label{geq:estim-m1}
 |\bar\we(\tilde\rho_1)-\bar\we(\tilde\rho_2)|\le C(1+\mu)\sqrt{D}.
 \end{equation}
Then, by virtue of elementary estimate~(\ref{eq2a})   and inclusions $\tilde\rho_1\in(12r_1,24r_1)$, \ $\tilde\rho_2\in\bigl(\frac1{16} r_2,\frac18 r_2\bigr)$, we obtain the required inequality~(\ref{geq:estim-m1}). 

The proof of Theorem~\ref{th:estim1} is finished completely. $\qed$

\section{The second basic (asymptotic) estimate for the velocity}\label{sec-es2}

For the limiting case (when $r_{i}$ are very large and velocity is ``almost constant'' on the boundary circles) the general estimate of Theorem~\ref{th:estim1} can be improved essentially. 

\begin{theorem}\label{th:estim2}
Let $\we_k$ be a sequence of $D$-solutions to the Navier--Stokes system
 \begin{equation}  \label{NSE-gen-k} 
\left\{\begin{aligned}
 & \Delta \mathbf{w}_k - (\mathbf{w}_k \cdot \nabla) \mathbf{w}_k - \nabla p_k = \mathbf{0}, \\
 & \nabla \cdot \mathbf{w}_k = 0
\end{aligned}
\right.
\end{equation}
 in the annulus type domains $\Omega_{r_{1k},r_{2k}}$. Suppose, in addition, that 
\begin{equation} \label{eq:r-ass}
r_{1k}\to+\infty,\qquad\frac{r_{2k}}{r_{1k}}\to+\infty,
 \end{equation} 
 and  there exist two vectors $\we_0$, $\we_\infty\in\R^n$ such that 
 \begin{equation} \label{e-eq:estim-a1}
 \max_{z\in S_{r_{1k}}}|\we_k(z)-\we_0|\to0, \qquad\max_{z\in S_{r_{2k}}}|\we_k(z)-\we_\infty|\to0\qquad\mbox{\rm as }k\to\infty.
 \end{equation}
 Then 
 \begin{equation} \label{2e-eq:estim-a2}
|\we_0-\we_\infty|\le C_{**}\frac{D_*}{\me},
 \end{equation}
 where $\me:=\max\bigl\{|\we_0|,|\we_\infty|\bigr\}$, $D_* =\varliminf\limits_{k\to\infty}\int_{\Omega_{r_{1k},r_{2k}}}|\nabla\we_k|^2$,
 and $C_{**}$ is some universal positive constant (does not depend on $\we_k$, etc.)
\end{theorem}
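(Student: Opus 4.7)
The plan is to argue by contradiction via a blow-down (rescaling) procedure that reduces the claim to a property of stationary Euler flows in the unit disk with constant boundary velocity, where the recent results of~\cite{KR21new} apply. Recast the conclusion as $\me\,|\we_0-\we_\infty|\le C_{**}D_*$; this is trivial when $\me=0$, so henceforth assume $\me>0$. Theorem~\ref{th:estim1} applied to each $\we_k$, with $\mu_k=1/(r_{1k}m_k)\to 0$, already yields $|\we_0-\we_\infty|\le C_*\sqrt{D_*}$ in the limit, so both $|\we_0-\we_\infty|$ and $D_*$ are finite. Extract a subsequence so that $m_k\to\me>0$ and $D_k\to D_*$, and assume for contradiction that $m_k|\we_{0,k}-\we_{\infty,k}|/D_k\to\infty$.

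Next, rescale by the outer radius: set $\tilde\we_k(x):=\we_k(r_{2k}x)$ and $\tilde p_k(x):=p_k(r_{2k}x)$ on $\tilde\Omega_k:=\Omega_{r_{1k}/r_{2k},\,1}$. A direct calculation shows $\tilde\we_k$ satisfies
\begin{equation*}
-\tfrac{1}{r_{2k}}\Delta\tilde\we_k+(\tilde\we_k\cdot\nabla)\tilde\we_k+\nabla\tilde p_k=0,
\end{equation*}
a vanishing-viscosity Navier--Stokes system on an annulus whose inner radius shrinks to $0$. The Dirichlet integral is preserved, $\int_{\tilde\Omega_k}|\nabla\tilde\we_k|^2=D_k$, and Lemma~\ref{lem:pressure} gives uniform control of $\tilde p_k$ (after a normalization) on every compact subset of $B_1\setminus\{0\}$. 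Standard compactness extracts a weak limit $\we_E\in H^1_{\mathrm{loc}}(B_1\setminus\{0\})$ with pressure $p_E$ solving the stationary Euler system. Since $\{0\}$ has zero $H^1$-capacity in $\mathbb R^2$, $\we_E$ extends to $H^1(B_1)$; its trace on $\partial B_1$ equals $\we_\infty$, and the uniform convergence hypothesis on $S_{r_{1k}}$ further propagates to $\bar\we_E(\rho)\to\we_0$ as $\rho\to 0^+$.

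The key input from~\cite{KR21new} is that, for such an Euler flow in the unit disk with constant velocity on the boundary, $p_E$ satisfies the classical Neumann boundary condition $\partial_n p_E|_{\partial B_1}=0$, and consequently $|p_E|$ is controlled by the Dirichlet integral $D_*$. This pressure control, combined with the Bernoulli one-sided maximum principle (giving $|\we_E|\le C\me$ on every subannulus $\{\rho\le|z|\le 1\}$) and the identity $\nabla\Phi_E=-\nabla^\bot\omega_E+\omega_E\we_E^\bot$ integrated along a radial segment, together with the Gilbarg--Weinberger angular control of Lemma~\ref{lem:angle}, yields in the limit
\begin{equation*}
\me\,|\we_0-\we_\infty|\le C\,D_*,
\end{equation*}
contradicting the assumption.

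The main obstacle lies in the rigorous identification of $\we_0$ with $\lim_{\rho\to 0^+}\bar\we_E(\rho)$: since the origin is not a classical boundary point of $B_1$, one must quantitatively track how the uniform convergence on the shrinking inner circles $S_{r_{1k}/r_{2k}}$ survives the passage to the weak $H^1_{\mathrm{loc}}$-limit. A secondary delicate issue is importing the Neumann-type boundary condition and the Dirichlet-bounded pressure of~\cite{KR21new} into our particular blow-down limit, which requires sufficiently strong convergence of $\tilde p_k$ in a neighborhood of $\partial B_1$ to preserve the boundary-flux structure.
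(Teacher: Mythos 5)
Your overall strategy (blow-down to an Euler flow, then invoke the Neumann-boundary/pressure results of~\cite{KR21new}) correctly identifies one ingredient of the paper's proof, but the architecture is wrong in a way that you yourself flag and that is in fact fatal. Rescaling only by the outer radius $r_{2k}$ sends the inner circle $S_{r_{1k}/r_{2k}}$ to a single point of zero $H^1$-capacity. After you remove that point singularity, the limiting Euler flow $\we_E\in H^1(B_1)$ retains no trace of the inner boundary data: weak $H^1_{\mathrm{loc}}$ convergence cannot see values on circles that shrink to a point, so the assertion $\bar\we_E(\rho)\to\we_0$ as $\rho\to0^+$ has no justification and is false in general. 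Since the whole point of the theorem is to compare the velocity at the two incommensurate scales $r_{1k}$ and $r_{2k}$ (with $r_{2k}/r_{1k}\to\infty$), a single rescaling cannot capture both; the paper instead performs \emph{two} blow-downs, one near $r_{1k}$ (giving an Euler flow in the exterior of the unit disk, the case proved in Theorem~\ref{th:euler_est}) and one near $r_{2k}$ (giving an Euler flow in $B_1$), and uses them only to obtain the pointwise estimates of Theorem~\ref{th:estim-eul-lim} on two nearby circles $S_{(1+\delta)r_{1k}}$ and $S_{(1-\delta)r_{2k}}$.

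The second, equally serious gap is that the actual quantitative work happens in the intermediate annulus between those two circles and is carried out on the Navier--Stokes solutions $\we_k$ themselves, not on any Euler limit. Concretely, the paper repeats Steps~1--6 of Lemma~\ref{lm:estim1} to get the improved vorticity estimate $\int \frac1r|\nabla\omega_k|\le\e_k\to0$ and the angular control \eqref{2e-eq-3-9}, and then runs the contradiction argument of Step~7: two regular level sets of the Bernoulli pressure $\Phi_k$, the sign-definiteness of $\omega_k$ between them (the Amick/\cite{KPR20} input), and the identity $\nabla\Phi_k=-\nabla^\perp\omega_k+\omega_k\we_k^\perp$ integrated along a nearly radial segment. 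It is precisely this level-set/vorticity-sign mechanism that upgrades the trivial $\sqrt{D_*}$ bound (which you correctly note follows from Theorem~\ref{th:estim1}) to the linear-in-$D_*$ bound \eqref{2e-eq:estim-a2}. Your sketch replaces all of this with the phrase ``yields in the limit $\me|\we_0-\we_\infty|\le CD_*$,'' but the pressure control and Lemma~\ref{lem:angle} alone do not produce a $D_*$-order estimate; without the sign-definiteness of the vorticity between $\Phi$-level sets and the $\int r|\nabla\omega_k|^2$ estimate on the physical annulus, the argument does not close.
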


To prove this theorem, we have to use some fine properties of solutions to Euler system from~\cite{KR21new}, considered in the next section~\ref{sec:3}.

\subsection{On solutions to Euler equations}
\label{sec:3}

In this section we consider some properties of weak solutions ${\mathbf v}\in W_\loc^{1,2}(\Omega)$ to the Euler system
\begin{equation}
\label{Eul}\left\{\begin{array}{rcl} \big({\bf
v}\cdot\nabla\big){\bf v}+\nabla p & = & 0 \qquad \ \ \
\hbox{\rm in }\;\;\Omega,\\[4pt]
	\div{\bf v} & = & 0\qquad \ \ \ \hbox{\rm in }\;\;\Omega,
\\[4pt]
{\bf v} &  = & {\mathbf e}\ \
 \qquad\  \hbox{\rm on }\;\;S_1=\partial\Omega,
\end{array}\right.
\end{equation}
where $S_1=\{z\in{\mathbb R}^2:|z|=1\}$ is the unit circle and ${\mathbf e}=(1,0)$ is the unit vector. Here $\Omega$ is the unit disk or its complement, \emph{i.e.}, 
\begin{equation}
\label{om1}\Omega=B_1
\end{equation}
or 
\begin{equation}
\label{om2}\Omega=\R^2\setminus B_1.
\end{equation}
Suppose that 
\begin{equation}
\label{de}\int\limits_{\Omega}|\nabla{\mathbf v}|^2<{\varepsilon}^2
\end{equation}
for some constant ${\varepsilon}\in(0,1)$. Then from the first equation~(\ref{Eul}${}_1$) one can assume that $|p|\sim{\varepsilon}$. 
Nevertheless, surprisingly much better estimate holds. 

\begin{theorem} \label{th:euler_est}
Let ${\mathbf v}\in W^{1,2}_{\mathrm{loc}}(\Omega)$ and $p\in W^{1,1}_{\mathrm{loc}}(\Omega)$ satisfy the Euler equations~{\rm (\ref{Eul}${}_{1-2}$)} for almost all~$z\in \Omega$. Suppose also that the~estimate~{\rm(\ref{de})} and the boundary condition~{\rm(\ref{Eul}${}_{3}$)} are fulfilled. Then $p\in C(\bar{\Omega})$, moreover,
\begin{equation}
\label{de-e}\sup\limits_{z_1,z_2\in \bar \Omega}|p(z_1)-p(z_2)|\le C\,{\varepsilon}^2,
\end{equation}
where $C$ is some universal constant (does not depend on ${\varepsilon},{\mathbf v},p$\,).
\end{theorem}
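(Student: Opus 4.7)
The plan is to exploit the observation that $p$ solves a Neumann boundary value problem whose right-hand side, rather than being merely $O(\varepsilon)$, is actually quadratic in the Dirichlet norm of $\mathbf{v}$ via a compensated-compactness/Hardy-space argument. This is what promotes the naive bound $|p|\sim\varepsilon$ (which would come from integrating $\nabla p = -(\mathbf{v}\cdot\nabla)\mathbf{v}$ and using Sobolev embedding) into the desired estimate~(\ref{de-e}).

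First I would derive the PDE for $p$. Taking the divergence of (\ref{Eul}${}_1$) and using $\nabla\cdot\mathbf{v}=0$ yields, in the distributional sense,
\begin{equation*}
\Delta p = -2\det(\nabla\mathbf{v})\qquad\text{in }\Omega.
\end{equation*}
By the celebrated compensated-compactness theorem of Coifman--Lions--Meyer--Semmes, the Jacobian of a $W^{1,2}$ vector field in two dimensions belongs to the real Hardy space $\mathcal{H}^1(\mathbb{R}^2)$ (after a standard extension), with the quantitative bound
\begin{equation*}
\|\det(\nabla\mathbf{v})\|_{\mathcal{H}^1(\mathbb{R}^2)} \le C\|\nabla\mathbf{v}\|_{L^2(\Omega)}^2 \le C\varepsilon^2.
\end{equation*}

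Second, I would establish the Neumann boundary condition $\partial_n p=0$ on $S_1$. The critical input is that $\mathbf{v}|_{S_1}\equiv\mathbf{e}$ is constant, so its tangential derivative along $S_1$ vanishes. Writing $\mathbf{v}=v_r\mathbf{e}_r+v_\theta\mathbf{e}_\theta$ and expanding $\nabla\cdot\mathbf{v}=0$ in polar coordinates at $r=1$, the boundary values $v_r=\cos\theta$, $v_\theta=-\sin\theta$ force the $\theta$-derivative terms to cancel, leaving $\partial_r v_r|_{r=1}=0$. Reading the Euler equation at the boundary then gives
\begin{equation*}
\partial_n p\big|_{S_1} = -\bigl((\mathbf{v}\cdot\nabla)\mathbf{v}\bigr)\cdot n\big|_{S_1}= -(\mathbf{e}\cdot n)\,\partial_r v_r\big|_{r=1} = 0.
\end{equation*}
Under the low regularity $\mathbf{v}\in W^{1,2}_{\mathrm{loc}}$, $p\in W^{1,1}_{\mathrm{loc}}$, this identity has to be read as a trace, which I would justify by approximation. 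Third, I would solve the Neumann problem $\Delta p = f$, $\partial_n p|_{S_1}=0$ with $\|f\|_{\mathcal{H}^1}\le C\varepsilon^2$, normalizing $p$ by a constant so that it has mean zero in case~(\ref{om1}) or vanishes at infinity in case~(\ref{om2}). The Fefferman $\mathcal{H}^1$--$\mathrm{BMO}$ duality combined with standard elliptic theory for the Neumann Laplacian yields $p\in C(\bar\Omega)$ with $\|p\|_{L^\infty(\Omega)}\le C\|f\|_{\mathcal{H}^1}\le C\varepsilon^2$, which is~(\ref{de-e}).

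The main obstacle is the third step in the exterior case~(\ref{om2}): the unboundedness of $\Omega$ complicates both the correct formulation of the Neumann problem at infinity (where the only information is the finite Dirichlet integral) and the passage from an $\mathcal{H}^1$ right-hand side to a pointwise oscillation bound, a bound which is sharper than $L^1$-theory would give and genuinely uses the cancellation/atomic structure of $\mathcal{H}^1$. A secondary subtlety is the rigorous derivation of the PDE and the Neumann trace at the stated low regularity, which I would handle by first proving the theorem for smooth approximate solutions and then passing to the limit, using lower semicontinuity of the Dirichlet integral to preserve the smallness condition~(\ref{de}) throughout the approximation.
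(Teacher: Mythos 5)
Your overall strategy coincides with the paper's: take the divergence to get $\Delta p=-\nabla\mathbf{v}\cdot(\nabla\mathbf{v})^\intercal$, place the right-hand side in the Hardy space $\mathcal{H}^1(\mathbb{R}^2)$ via Coifman--Lions--Meyer--Semmes after extending $\mathbf{v}$ by the constant vector $\mathbf{e}$, observe that $p$ satisfies a homogeneous Neumann condition on $S_1$, and read off the $O(\varepsilon^2)$ oscillation from the Neumann Green representation. However, two steps are genuinely incomplete. First, you propose to justify the Neumann trace ``by first proving the theorem for smooth approximate solutions and then passing to the limit'': there is no density of smooth Euler solutions in the class of $W^{1,2}_{\mathrm{loc}}$ weak solutions, so this approximation scheme cannot be run, and if instead you mollify $\mathbf{v}$ and $p$ you only get approximate solutions whose error terms in the trace identity are exactly as hard to control as the original problem. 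The viable route (this is \cite[Lemma~9]{KR21new}, which the paper invokes) is to derive the \emph{weak} Neumann formulation directly, by testing $(\mathbf{v}\cdot\nabla)\mathbf{v}+\nabla p=0$ against gradients of test functions that do not vanish on $S_1$ and integrating by parts using $\div\mathbf{v}=0$ and $\mathbf{v}|_{S_1}=\mathbf{e}$; your pointwise computation of $\partial_r v_r|_{r=1}$ presupposes precisely the trace regularity that is not available.

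Second, in the exterior case \eqref{om2} you correctly identify the behaviour at infinity as ``the main obstacle'' but leave it unresolved, and it is not a technicality: the Neumann problem in $\mathbb{R}^2\setminus B_1$ has infinitely many solutions differing by harmonic functions with vanishing normal derivative, so without controlling $p$ at infinity you cannot identify $p$ with the Green potential, and the oscillation bound does not follow from the $\mathcal{H}^1$ datum alone. The paper closes this by the Gilbarg--Weinberger estimates: the circular means $\bar p(r)$ converge, and along good radii $r_i$ one has $\sup_{S_{r_i}}|p|\to0$ (see \eqref{co-3}--\eqref{co-5}); writing $p=p_*+G$ with $G$ the Newtonian potential of the Hardy-space datum (so that $G\in D^{2,1}\cap D^{1,2}$ is continuous, vanishes at infinity, and satisfies $\|G\|_{L^\infty}\le C\varepsilon^2$), the harmonic remainder $p_*$ is forced to vanish at infinity, whence $p(x)=G(x)-G(0)+G\bigl(x/|x|^2\bigr)$ and \eqref{de-e}. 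Your appeal to ``Fefferman duality combined with standard elliptic theory'' would have to be fleshed out into an argument of this kind; as stated it addresses neither the non-uniqueness of the exterior Neumann problem nor the identification of the correct Green function.
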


\begin{proof}
For the case $\Omega=B_1$ the result was proved in~\cite{KR21new}. So let us consider the case of exterior domain~$\Omega=\R^2\setminus B_1$. 

By well-known fact concerning $D$-solutions to Euler and
Navier--Stokes system (see, \emph{e.g.}, \cite[Lemma~4.1]{GW}), the
averages of the pressure $\bar p(r)$ are uniformly bounded and have some limit at infinity, without loss of generality we may assume that this limit iz zero:
\begin{equation} \label{co-3}
\bar p(r)\to0\qquad\mbox{ as }r\to\infty.
\end{equation}
Moreover, since
$\int\limits_{\Omega}|\nabla\ve|^2dx<\infty$, there exists an
increasing sequence $r_i\to +\infty$ such that
\begin{equation} \label{co-4}
\int\limits_{S_{r_i}}|\nabla\ve|\,ds\le \frac{\e_i}{\sqrt{\ln r_i}} \qquad\mbox{ with }\e_i\to0\quad\mbox{ as
}i\to\infty
\end{equation}
and
\begin{equation} \label{co-4'}
\sup\limits_{z\in S_{r_i}} \bigl|\ve(z)\bigr|\le \e_i\sqrt{\ln
r_i}
\end{equation}
(see \cite[Lemmas~2.1--2.2]{GW})). From
(\ref{co-3})--(\ref{co-4'}) and from the equation~(\ref{Eul}$_1$)
it follows that
\begin{equation} \label{co-5}
\sup\limits_{z\in S_{r_i}} \bigl|p(z)\bigr|\to0\qquad\mbox{ as }i\to\infty.
\end{equation}
Indeed,
$$
|p(r_i,\theta)-\bar p(r_i)|\le \int\limits_{S_{r_i}}|\nabla p|\,ds
\leq \int\limits_{S_{r_i}}|\ve|\cdot|\nabla \ve|\,ds\leq \e_i\sqrt{\ln
r_i} \int\limits_{S_{r_i}}|\nabla \ve|\,ds\le \e^2_i.
$$

Taking divergence on the first equation in \eqref{Eul} gives
\begin{equation}\label{r2.2}
\Delta p = - \nabla {\mathbf v}\cdot (\nabla{\mathbf v})^\intercal
\end{equation}
{\sl We can extend ${\mathbf v}$ outside $\Omega$ by the constant vector ${\mathbf e}$ so that ${\mathbf v}$ is globally defined and divergence free in $\mathbb{R}^2$.} By the classical div-curl lemma (see, \emph{e.g.}, \cite{CLMS}), \,$\nabla{\mathbf v}\cdot (\nabla{\mathbf v})^\intercal$ belongs to the Hardy space $\mathcal{H}^1(\mathbb{R}^2)$. 
Put
$$
G(x)=-\frac{1}{2\pi}\int\limits_{\Omega} \log|x-y|(\nabla{\mathbf
v}\cdot\nabla{\mathbf v}^\top)(y)\,dy.
$$
By Calder\'on--Zygmund theorem for Hardy's
spaces \cite{St}, \,$G\in D^{2,1}(\R^2)\cap D^{1,2}(\R^2)$,
 where $D^{k,q}(\R^2)$ means the space of measurable functions,whose distributional derivatives of $k$-th order belong to~$L^q(\R^2)$. \ By
classical facts from the theory of Sobolev spaces (see, \emph{e.g.},
\cite{maz'ya}\,), the last inclusion implies that~$G$ is
continuous and convergent to zero at infinity, in particular,
\begin{equation} \label{co-6--}
\|\nabla G\|_{L^2(\R^2)}+\|\nabla^2 G\|_{L^1(\R^2)}\le C\e^2,
\end{equation}
\begin{equation} \label{co-6}
\sup\limits_{z\in\R^2}|G(z)|< C\e^2,
\end{equation}
\begin{equation} \label{Gto0}
G(z) \to 0, \quad \mathrm{as} \ z \to \infty.
\end{equation}
Consider the decomposition 
\begin{equation} \label{decomp0}
p=p_*+G.
\end{equation}
By construction, $\Delta p_*=0$
in $\Omega$, \emph{i.e.}, $p_*$ is a harmonic function, and
by~(\ref{co-5}) we have
\begin{equation} \label{co-7}
\sup\limits_{z\in S_{r_i}}|p_*(z)|\to0\qquad\mbox{ as }i\to\infty.
\end{equation}
Of course, it implies 
\begin{equation} \label{co-7-11}
|p_*(z)|\to0\qquad\mbox{ as }|z|\to\infty.
\end{equation}
Since $p = G + p_*$, we have $p(z) \to 0$ as $z \to \infty$. In \cite[Lemma~9]{KR21new}, it was proved that $p$ satisfies the Neumann boundary conditions (in weak sense) on~$S_1$: 
$$\partial_n p=0\qquad\mbox{ on }S_1.$$
Therefore, $p$ can be solved from \eqref{r2.2} using the Green function for the Poisson problem in $\Omega$  with Neumann boundary conditions:
\begin{equation}\label{p-Neuman}
p(x) = -\frac{1}{2\pi} \int_\Omega \left( \log |x - y| + \log \left|\frac{x}{|x|^2} - y\right| - \log|y| \right) (\nabla{\mathbf
v}\cdot\nabla{\mathbf v}^\top)(y)\, dy.
\end{equation}
It is important to observe that in the above formula, the integral domain $\Omega$ can be replaced by $\mathbb{R}^2$ , since $\nabla{\mathbf
v}\cdot\nabla{\mathbf v}^\top = 0$ in $B_1$ due to our extension of $\mathbf{v}$ by the unit constant vector in~$B_1$. Using the above definition of the function $G(x)$, we can rewrite identity~(\ref{p-Neuman}) as 
$$p(x)=G(x)-G(0)+G\bigl(\frac{x}{|x|^2}\bigr).$$ So the the required bound~(\ref{de-e}) follows from~\eqref{co-6}. 
\end{proof}

The proof of the central Theorem~\ref{th:euler_est} is finished. In \cite[\S2]{KR21new} it was shown, that the established estimates imply 

\begin{corollary} \label{lem:pE}
Let the assumptions of Theorem~\ref{th:euler_est} be fulfilled. Then there exists a set  $\mathscr{F}_0\subset(\frac12,\frac32)$ of positive measure (having one-sided density~$1$ at~$1$\,) \ such that 
\begin{equation} \label{eq:pr-gr7}
S_r\subset\Omega \qquad\mbox{\rm \ and \ \ }\max_{z\in S_r}\biggl(|{\mathbf v}(z)-{\mathbf e}| + \bigl|p(z) - \dashint_{S_{r}} p\bigr|\biggr) \le C {\varepsilon}^2\qquad\qquad\forall r\in\mathscr{F}_0,
\end{equation}where $C$ is some universal constant (does not depend on ${\varepsilon},{\mathbf v},p$, etc.\,).
\end{corollary}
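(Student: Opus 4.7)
The proof derives the corollary from Theorem~\ref{th:euler_est}, splitting the estimate into a pressure part and a velocity part. The pressure bound $|p(z)-\dashint_{S_r}p|\le C\varepsilon^2$ is immediate: Theorem~\ref{th:euler_est} provides the global oscillation $\sup_{z_1,z_2\in\bar\Omega}|p(z_1)-p(z_2)|\le C\varepsilon^2$, which automatically dominates the oscillation on any circle. Hence this part of the estimate holds for every $r$ with $S_r\subset\Omega$, independently of the selection of $\mathscr F_0$.

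For the velocity, I would first control the modulus via the Bernoulli function $\Phi:=p+\tfrac12|\mathbf v|^2$. For Euler, one has $(\mathbf v\cdot\nabla)\mathbf v=\tfrac12\nabla|\mathbf v|^2-\omega\mathbf v^\perp$, so $\nabla\Phi=\omega\mathbf v^\perp$ and $\mathbf v\cdot\nabla\Phi\equiv 0$: $\Phi$ is conserved along streamlines. On $S_1$, $\Phi=p|_{S_1}+\tfrac12$, giving the boundary oscillation bound $\le C\varepsilon^2$. Since $\mathbf v$ is an $H^1$-small perturbation of $\mathbf e$, every streamline passing through a one-sided neighborhood of $S_1$ reaches $S_1$ transversally at some point $z_\partial$, so $\Phi(z)=\Phi(z_\partial)$ by conservation. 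Combined with the pressure oscillation bound, this yields $\operatorname{osc}|\mathbf v|^2\le 2C\varepsilon^2$ in that neighborhood, and because $|\mathbf v|=1$ on $S_1$, I conclude $\bigl||\mathbf v(z)|-1\bigr|\le C\varepsilon^2$ pointwise near $S_1$.

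For the direction and for the selection of $\mathscr F_0$, I would impose on $r$ in a small one-sided neighborhood of $1$ the following criteria: (a)~$S_r\subset\Omega$; (b)~$\mathbf v|_{S_r}$ is continuous with $\int_{S_r}|\nabla\mathbf v|^2\,ds$ no larger than a fixed multiple of its radial average (Chebyshev on the radial variable); (c)~the mean $|\bar{\mathbf v}(r)-\mathbf e|\le C\varepsilon^2$, which holds once $r$ is close enough to $1$ by integrating $\partial_\rho\bar{\mathbf v}$ from $1$ with Cauchy--Schwarz. A standard measure-theoretic argument shows such $r$ form a set of one-sided density $1$ at $r=1$. On each such $r$, combining the modulus bound $\bigl||\mathbf v|-1\bigr|\le C\varepsilon^2$ from the previous step, the mean bound~(c), and the direction control of Lemma~\ref{lem:angle} (applied with $\sigma\approx 1$ coming from the modulus bound, and with $\int|\nabla\omega|$ tamed via the Euler vorticity transport $\mathbf v\cdot\nabla\omega=0$ on the short annulus), I obtain the pointwise bound $\max_{z\in S_r}|\mathbf v(z)-\mathbf e|\le C\varepsilon^2$.

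The main obstacle is upgrading the pointwise bound on the \emph{direction} of $\mathbf v$ from the naive Sobolev-trace level $\varepsilon$ to the required $\varepsilon^2$. The Euler structure is essential here: without Theorem~\ref{th:euler_est}'s Hardy-space pressure estimate (pressure oscillation $\lesssim\varepsilon^2$ rather than the naive $\varepsilon$) together with Bernoulli conservation and vorticity transport, one cannot close the factor-of-$\varepsilon$ gap. The density-$1$ selection of $\mathscr F_0$ is precisely the mechanism that makes the angle estimate of Lemma~\ref{lem:angle} saturate at the $\varepsilon^2$ level on most radii near $1$, rather than uniformly on $(1/2,3/2)$.
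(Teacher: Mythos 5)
First, note that the paper does not actually prove this corollary: it is quoted from \cite[\S2]{KR21new}, so any self-contained argument is necessarily a different route. Your pressure bound is correct and immediate from Theorem~\ref{th:euler_est}. The gap is in the velocity bound. Your ingredients control three quantities at the level $\varepsilon^2$: the oscillation of $p$, the modulus $\bigl||\mathbf v|-1\bigr|$ (via Bernoulli), and the \emph{circle-averaged} direction $\varphi(r)$ (via Lemma~\ref{lem:angle}, which in any case is stated for Navier--Stokes, not Euler). None of these controls the \emph{pointwise} direction of $\mathbf v(z)$ for $z\in S_r$. Your Chebyshev criterion (b) only yields $\int_{S_r}|\nabla\mathbf v|^2\,ds\lesssim\varepsilon^2$ on most circles, hence an oscillation of $\mathbf v$ on $S_r$ of order $\varepsilon$, not $\varepsilon^2$. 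Concretely, a field $\mathbf v=(\cos\alpha,\sin\alpha)$ on $S_r$ with $\alpha$ of mean zero and amplitude $\varepsilon$ has $|\mathbf v|\equiv1$, $|\bar{\mathbf v}(r)-\mathbf e|\sim\varepsilon^2$, and tangential Dirichlet energy $\sim\varepsilon^2$ on the circle --- consistent with every constraint you impose --- yet $\max_{S_r}|\mathbf v-\mathbf e|\sim\varepsilon$. Equivalently, writing $\mathbf v=\mathbf e+\mathbf u$, Bernoulli pins down $u_1=-\tfrac12|\mathbf u|^2+O(\varepsilon^2)$ but says nothing about $u_2$ beyond $O(\varepsilon)$. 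So the step you yourself flag as ``the main obstacle'' is not closed by the mechanism you describe.

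The feature of the statement you are not exploiting is that $\mathscr F_0$ only needs one-sided density $1$ \emph{at} $r=1$, i.e.\ it may concentrate arbitrarily close to $S_1$, where the boundary condition $\mathbf v=\mathbf e$ does the work. For a.e.\ $\theta$ the radial fundamental theorem of calculus gives $|\mathbf v(r,\theta)-\mathbf e|^2\le|1-r|\int|\partial_\rho\mathbf v|^2\,d\rho$, whence $\|\mathbf v-\mathbf e\|_{L^2(S_r)}^2\le C\,|1-r|\,A_{|1-r|}$ with $A_\delta:=\int_{\Omega(1-\delta,1+\delta)\cap\Omega}|\nabla\mathbf v|^2\to0$. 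Combining this with the one-dimensional interpolation $\|f\|_{L^\infty(S_r)}^2\le C\bigl(\|f\|^2_{L^2(S_r)}+\|f\|_{L^2(S_r)}\|\nabla f\|_{L^2(S_r)}\bigr)$ and a Chebyshev selection of $r$ with $\int_{S_r}|\nabla\mathbf v|^2\,ds\le\varepsilon^8/(|1-r|A_{|1-r|})$ (the excluded set in $(1-\delta,1)$ has measure $\le\delta A_\delta^2/\varepsilon^8=o(\delta)$) yields $\max_{S_r}|\mathbf v-\mathbf e|\le C\varepsilon^2$ on a set of one-sided density $1$ at $1$ --- with no Euler structure needed for the velocity at all. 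By contrast, on circles at distance $d\sim1$ from $S_1$, where your argument operates, the bound $|\mathbf v-\mathbf e|\le C\varepsilon^2$ is most likely false (a Taylor expansion of an Euler solution with zero Cauchy data for $\mathbf v-\mathbf e$ on $S_1$ gives $|\mathbf v-\mathbf e|\sim\varepsilon d$ at distance $d$). I recommend restructuring the velocity part of your proof along these lines and keeping Theorem~\ref{th:euler_est} only for the pressure term.
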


Sometimes solutions to Euler system can be obtained as a~limit of solutions to Navier--Stokes equations when viscosity coefficient tends to zero. We can use this fact in 
the following useful statement.

\begin{theorem}\label{th:estim-eul-lim}
Let the assumptions of Theorem~\ref{th:estim2} be fulfilled.  Suppose, in addition, that $\bar p_k(r_{1k})=0$. Then there exists $\delta\in\bigl(0,\frac12)$ such that 
 \begin{equation}\label{eq:fin-Eul-1}
\me_0\cdot\bigl|{\mathbf w}_k(z)-\we_0\bigr| + \bigl|p_k(z)\bigr| \le C D_*+{\varepsilon}_k\qquad\ \forall z\in S_{(1+\delta)r_{1k}},
\end{equation}
 \begin{equation}\label{eq:fin-Eul-2}
\me_\infty\cdot\bigl|{\mathbf w}_k(z)-\we_\infty \bigr| + \bigl|p_k(z)\bigr| \le C D_*+{\varepsilon}_k\qquad\ \forall z\in S_{(1-\delta)r_{2k}},
\end{equation}
where $\me_\infty=|\we_\infty|$, \ $\me_0=|\we_0|$, $D_*=\varliminf\limits_{k\to\infty}\int_{\Omega_{r_{1k},r_{2k}}}|\nabla\we_k|^2$, \ $\e_k\to0$ as $k\to\infty$, 
 and $C$ is some universal positive constant (does not depend on $\we_k,\,r_{ik}$, etc.)
\end{theorem}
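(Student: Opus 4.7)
The plan is to blow down around each boundary circle to extract Euler limits and apply Theorem~\ref{th:euler_est} and Corollary~\ref{lem:pE}. I focus on the inner estimate~(\ref{eq:fin-Eul-1}); the outer one~(\ref{eq:fin-Eul-2}) is symmetric. Define $\tilde{\we}_k(z) := \me_0^{-1}\we_k(r_{1k} z)$ and $\tilde{p}_k(z) := \me_0^{-2}p_k(r_{1k} z)$ on the rescaled annuli $\tilde{\Omega}_k := \{1 \le |z| \le r_{2k}/r_{1k}\}$, which exhaust $\R^2 \setminus \overline{B_1}$. A direct computation shows $(\tilde{\we}_k, \tilde{p}_k)$ solves Navier--Stokes with vanishing effective viscosity $\nu_k := (r_{1k}\me_0)^{-1} \to 0$; scale-invariance of the Dirichlet integral in two dimensions gives $\int_{\tilde{\Omega}_k} |\nabla \tilde{\we}_k|^2 \le D_*/\me_0^2 + o(1) =: \tilde{\varepsilon}^2$; and~(\ref{e-eq:estim-a1}) gives $\tilde{\we}_k|_{S_1} \to \tilde{\ee} := \we_0/\me_0$ uniformly, a unit vector. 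Rotating coordinates, I may take $\tilde{\ee} = (1, 0)$. It is enough to handle the regime $\tilde{\varepsilon} \ll 1$: otherwise $\me_0 \le C\sqrt{D_*}$, and then~(\ref{eq:fin-Eul-1}) already follows from Theorem~\ref{th:estim1} (giving $\me_0 |\we_k - \we_0| \le C \me_0 \sqrt{D_*} \le C D_*$) together with Lemma~\ref{lem:pressure} and the ``good circle'' pressure bound~(\ref{per-eq3b}$_4$).

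\textbf{Passage to Euler.} The self-improvement estimate~(\ref{eq-per1}), combined with the uniform smallness of $\tilde{\we}_k - \tilde{\ee}$ on $S_1$, yields uniform $W^{2, 4/3}$ (hence $C^{0, \alpha}$) bounds for $(\tilde{\we}_k, \tilde{p}_k)$ on compact neighborhoods of $S_1$ in $\overline{\R^2 \setminus B_1}$. Extracting a subsequence, $(\tilde{\we}_k, \tilde{p}_k) \to (\ve, \tilde{p})$ in $C^{0, \alpha}_{\mathrm{loc}}$ and weakly in $H^1_{\mathrm{loc}}$. Since $\nu_k \to 0$ and Rellich compactness carries the nonlinear term to the limit, $(\ve, \tilde{p})$ is a weak solution of the Euler system~(\ref{Eul}) on $\R^2 \setminus \overline{B_1}$ with $\ve|_{S_1} = \tilde{\ee}$ and $\int |\nabla \ve|^2 \le \tilde{\varepsilon}^2$. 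Corollary~\ref{lem:pE} (in the exterior case, where $\mathscr{F}_0 \subset (1, 3/2)$) now produces $\delta \in (0, 1/2)$ and a circle $S_{1+\delta}$ on which
$$\max_{z \in S_{1+\delta}} \bigl(|\ve(z) - \tilde{\ee}| + |\tilde{p}(z) - \bar{\tilde{p}}(1+\delta)|\bigr) \le C\tilde{\varepsilon}^2 = C D_*/\me_0^2.$$
The normalization $\bar p_k(r_{1k}) = 0$ rescales to $\bar{\tilde{p}}_k(1) = 0$; Lemma~\ref{lem:pressure} then yields $|\bar{\tilde{p}}_k(r)| \le D_*/(4\pi \me_0^2)$ for all $r \in [1, r_{2k}/r_{1k}]$, so $|\bar{\tilde{p}}(1+\delta)| \le C D_*/\me_0^2$ in the limit. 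Absorbing this into the Euler bound gives $|\ve - \tilde{\ee}| + |\tilde{p}| \le C D_*/\me_0^2$ on $S_{1+\delta}$.

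\textbf{Translation back and outer case.} By uniform convergence on $S_{1+\delta}$, the same inequality holds for $(\tilde{\we}_k, \tilde{p}_k)$ up to an $o(1)$ error. Multiplying through by $\me_0^2$ and inverting the rescaling recovers exactly~(\ref{eq:fin-Eul-1}). For the outer estimate~(\ref{eq:fin-Eul-2}), I would repeat the entire argument with $\tilde{\we}_k(z) := \me_\infty^{-1}\we_k(r_{2k} z)$ on $\{r_{1k}/r_{2k} \le |z| \le 1\} \to B_1 \setminus \{0\}$. The resulting Euler limit has finite global Dirichlet integral and a removable singularity at the origin (a single point has zero $H^1$-capacity in two dimensions), placing us in the setting $\Omega = B_1$ of Theorem~\ref{th:euler_est}; the rest of the argument is identical.

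\textbf{Main obstacle.} The crux will be the passage to the Euler limit up to the boundary $S_1$. In a general vanishing-viscosity limit, boundary layers can obstruct such convergence, but here $\tilde{\we}_k|_{S_1}$ approaches a constant vector, compatible with the inviscid boundary condition, so the uniform elliptic regularity afforded by~(\ref{eq-per1}) extends up to $S_1$ and no boundary layer develops. A secondary delicate point is that the set $\mathscr{F}_0$ from Corollary~\ref{lem:pE} must intersect $(1, 1 + \delta_0)$ for arbitrarily small $\delta_0 > 0$; this is exactly the one-sided density-$1$ property at $r = 1$.
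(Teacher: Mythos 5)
Your proposal is correct and follows essentially the same route as the paper, which gives no details of its own and simply states that the theorem "was deduced (by corresponding scaling and limiting procedure) from Theorem~\ref{th:euler_est} and Corollary~\ref{lem:pE}" in \cite[Section~4]{KR21new}; your blow-down to an Euler limit on $\R^2\setminus B_1$ (resp.\ on $B_1$, via a removable point singularity) followed by the transfer back through a good circle $S_{1+\delta}$ with $1+\delta\in\mathscr{F}_0$ is exactly that deduction. One caution: uniform regularity genuinely "up to $S_1$" is neither available (no boundary condition is imposed on $S_{r_{1k}}$, so only interior estimates of the type \eqref{eq-per1} apply) nor needed --- the Euler boundary condition $\ve|_{S_1}=\tilde{\mathbf e}$ follows from compactness of the trace map under the weak $H^1$ convergence together with \eqref{e-eq:estim-a1}, while the pointwise estimate is only required on the interior circle $S_{1+\delta}$, where interior $W^{2,4/3}\hookrightarrow C^{0,\alpha}$ bounds suffice.
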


\begin{proof}
The assertion of the last theorem was deduced (by corresponding scaling and limiting procedure) from Theorem~\ref{th:euler_est} and Corollary~\ref{lem:pE} in 
\cite[Section~4]{KR21new} (see, \emph{e.g.}, the formula~(4.10) in \cite{KR21new} and commentaries). 
\end{proof}

\subsection{Proof of Theorem \ref{th:estim2}}\label{sec-es-pr}

Let the conditions of Theorem~\ref{th:estim2} be fulfilled.
As before, we have to make several simplifications. For definiteness, assume that 
$\me=|\we_\infty|\ge|\we_0|$. Denote $\me_0=|\we_0|$. 
Now take and fix some small number $\e_0\in(1,\frac1{100})$ (the exact value of $\e_0$ will be specified below). Obviously, it is sufficient to consider only the case when
 \begin{equation} \label{2-f-add4}
D_*<\e_0\me^2.
 \end{equation}
 Indeed, if the opposite inequality valid, then the required estimate~(\ref{2e-eq:estim-a2}) is fulfilled automatically with the fixed constant $C=\frac2{\e_0}$, and there are nothing to prove. So below we assume that~(\ref{2-f-add4}) is fulfilled as well, therefore,
  \begin{equation} \label{2e-f-add4}
D(r_{1k},r_{2k})<\e_0\me^2
 \end{equation}
 for $k$ large enough. Applying Theorem~\ref{th:estim1} (with $\mu$ arbitrary small as $k\to\infty$ because of assumption $r_{1k}\to+\infty$\,), we obtain
   \begin{equation} \label{2e-f-add2}
\frac12\me<|\bar\we_k(r)|<\frac32\me\qquad\forall r\in[r_{1k},r_{2k}].
 \end{equation}
Take $\delta\in(0,\frac12)$ from Theorem~\ref{th:estim-eul-lim}
and denote $\rho_{1k}=(1-\delta)r_{1k}$,  \ $\rho_{2k}=(1-\delta)r_{2k}$. 
 Repeating Steps~1--6 of the proof of Lemma~\ref{lm:estim1} (with evident modifications and arbitrary small~$\mu$ as $k\to+\infty$), we obtain that 
 \begin{equation} \label{2e-eq-3-8'}
\int\limits_{\rho_{1k}\le r\le\rho_{2k}} \frac1r|\nabla \omega_k| \le \e_k\to0\qquad\mbox{ as \ }k\to+\infty,
 \end{equation}
consequently, from the formula~(\ref{eq:phidifference}) and assumptions~(\ref {2e-f-add2}), (\ref {2e-f-add4}) we obtain that 
 \begin{equation} \label{2e-eq-3-9}
|\varphi_k(r)|\le \frac1{4\pi\me^2} \bigl(D_*+\e_k\bigr)\le \frac1{4\pi}\e_0\qquad\forall r\in[\rho_{1k},\rho_{2k}]
\end{equation}
 for all sufficiently large~$k$. 
 
 Recall, that by Theorem~\ref{th:estim-eul-lim},  
 \begin{equation}\label{2e-eq:fin-Eul-1}
\me_0\bigl| {\mathbf w}_k(z)-\we_0\bigr| + \bigl|p_k(z)\bigr| \le C D_*+{\varepsilon}_k\qquad\ \forall z\in S_{\rho_{1k}},
\end{equation}
 \begin{equation}\label{2e-eq:fin-Eul-2}
\me\bigl|{\mathbf w}_k(z)-\we_\infty\bigr| + \bigl|p_k(z)\bigr| \le C D_*+{\varepsilon}_k\qquad\ \forall z\in S_{\rho_{2k}}.
\end{equation}
Consequently, using $\me_0\ge\frac12\me$, we obtain:
\begin{equation} \label{2e-eq-3-8}
\min\limits_{z\in S_{\rho_{2k}}} \Phi_k(z)\ge \max\limits_{z\in S_{\rho_{1k}}} \Phi_k(z)+\frac12\biggl(\me^2-\me_0^2-12CD_*-\e_k\biggr),
\end{equation}
where $C$ is the same as in~(\ref{2e-eq:fin-Eul-1})--(\ref{2e-eq:fin-Eul-2}). 

Below we consider the case $D_*>0$ (the case $D_*=0$ can be considered similarly, with some evident simplifications). Now we claim that the inequality 
\begin{equation} \label{2e-eq-3-10}
\me-\me_0\le \frac{10}\me CD_*
\end{equation}
holds for sufficiently small~$\e_0$, where $C$ is the same as in~(\ref{2e-eq-3-8}). 
Indeed, suppose that the opposite inequality is valid, then
 from~(\ref{2e-eq-3-8}) we have 
\begin{equation} \label{2e-eq-3-12}
\min\limits_{z\in S_{\rho_{2k}}} \Phi_k(z)> \max\limits_{z\in S_{\rho_{1k}}} \Phi_k(z)+CD_*.\end{equation}
Now we can apply the methods of \cite{KPR20} (based on level set structures of $\Phi_k$) in the same way as on the Step~7 of the proof of previous Lemma~\ref{lm:estim1}, and to obtain the desired contradiction. So the~claim~(\ref{2e-eq-3-10}) is proved.

The formulas (\ref{2e-eq:fin-Eul-1})--(\ref{2e-eq:fin-Eul-2}), (\ref{2e-eq-3-10}) imply that
\begin{equation} \label{2e-eq-3-13}
\bigl| \bar{\mathbf w}_k(\rho_{1k})-\we_0\bigr| \le C\frac1\me D_*\end{equation}
\begin{equation} \label{2e-eq-3-14}
\bigl| \bar{\mathbf w}_k(\rho_{2k})-\we_\infty\bigr| \le C\frac1\me D_*.\end{equation}
\begin{equation} \label{2e-eq-3-15}
\biggl| |\bar{\mathbf w}_k(\rho_{2k})|-|\bar{\mathbf w}_k(\rho_{1k})|\biggr| \le C\frac1\me D_*\end{equation}
with some universal constant~$C$.
We need the following elementary fact: for any pair of vectors $\aa=a\ee_\aa$,\ \ $\bb=b\ee_\bb$, where $\ee_\aa$ and $\ee_\bb$ are the corresponding unit vectors, one has:
$$\bigl|\aa-\bb\bigr|\le \bigl|a-b\bigr|+b\bigl|\ee_\aa-\ee_\bb\bigr|.$$
This elementary formula together with established estimates~(\ref{2e-eq-3-15}), (\ref{2e-eq-3-9}) imply 
\begin{equation} \label{2e-eq-3-16}
\bigl| \bar{\mathbf w}_k(\rho_{2k})-\bar{\mathbf w}_k(\rho_{1k})\bigr| \le C\frac1\me D_*.\end{equation}
Thus by (\ref{2e-eq-3-13})--(\ref{2e-eq-3-14}) the proof of Theorem~\ref{th:estim2} is finished completely. $\qed$

\section{Uniform bounds for the invading domain solutions $\mathbf{w}_k$}\label{sec-unif}

The main result of this section is the following $k$-uniform bounds. 

\begin{theorem} \label{th-uniform-bound}
Suppose that $R > 0$, 
$\mathrm{supp \, } \mathbf{f} \subset B_R$, and $A = \|\mathbf{f}\|_{H^{-1}(B_{2R})}<\infty $, and consider $R_k \ge 2R$. Then the following uniform estimates for the invading domain solutions $\mathbf{w}_k$ to \eqref{NSE-k} are valid: 
\begin{equation} \label{eq31}
D_k:=\int_{{B_{R_k}}} |\nabla \mathbf{w}_k|^2 \le C_1A\biggl(A+\lambda+\frac{A^{\frac13}}{R^{\frac23}}\biggr), 
\end{equation}
\begin{equation} \label{eq31-1}
\max_{R \le r \le R_k} |\bar{\mathbf{w}}_k(r)| \le C_2\biggl(A+\lambda+\frac{A^{\frac13}}{R^{\frac23}}\biggr),
\end{equation} 
where $C_i$ are some universal constants {\rm(}do not depend on $\we_k$ and parameters $A,R,\lambda$, etc.{\rm)}.
\end{theorem}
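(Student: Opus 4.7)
The plan is to combine a straightforward energy identity with the first basic estimate (Theorem~\ref{th:estim1}) through a delicate algebraic bootstrap. First, set $\mathbf{u}_k := \mathbf{w}_k - \lambda\mathbf{e}_1$, which is divergence-free and vanishes on $\partial B_{R_k}$. Testing the equation for $\mathbf{w}_k$ against $\mathbf{u}_k$ annihilates both the convective term $\int (\mathbf{u}_k\cdot\nabla)\mathbf{u}_k\cdot\mathbf{u}_k$ and the drift term $\lambda \int \partial_1 \mathbf{u}_k \cdot \mathbf{u}_k$, giving $D_k = \langle \mathbf{f},\mathbf{u}_k\rangle$. Since $\mathrm{supp}\,\mathbf{f}\subset B_R$, I would insert the cutoff $\chi$ from Section~\ref{sec-fs} to rewrite $\langle\mathbf{f},\mathbf{u}_k\rangle = \langle\mathbf{f},\chi\mathbf{u}_k\rangle \le A\|\chi\mathbf{u}_k\|_{\dot{H}^1_0(B_{2R})}$. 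Expanding the cutoff commutator and using the Poincaré-type inequality~\eqref{scale3} yields
\begin{equation*}
D_k \le C A \bigl(|\bar{\mathbf{u}}_k(2R)| + \sqrt{D_k}\bigr).
\end{equation*}

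Next, Theorem~\ref{th:estim1} applied on the annulus $[2R,R_k]$ (using $\bar{\mathbf{w}}_k(R_k)=\lambda\mathbf{e}_1$) gives $|\bar{\mathbf{u}}_k(2R)| \le C_*(1 + 1/(2Rm))\sqrt{D_k}$ with $m := \max\{|\bar{\mathbf{w}}_k(2R)|,\lambda\}$. I would then split into cases according to the size of $Rm$. When $2Rm\ge 1$, direct substitution yields $\sqrt{D_k}\le CA$, hence $D_k\le CA^2$. When $2Rm<1$, the factor $(1+1/(2Rm))$ is dominated by $1/(Rm)$, producing $\sqrt{D_k}\le CA/(Rm)$; feeding this back into the bound for $|\bar{\mathbf{u}}_k(2R)|$ and using the trivial inequality $m\le |\bar{\mathbf{u}}_k(2R)|+\lambda$ leads to the cubic inequality $R^2 m^2(m-\lambda)\le CA$. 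The sub-case $m\ge 2\lambda$ yields $m\lesssim A^{1/3}/R^{2/3}$, hence $|\bar{\mathbf{u}}_k(2R)|\le 3m/2$, which plugged back in the energy estimate gives $D_k \le C(A^2 + A^{4/3}/R^{2/3})$; the sub-case $m<2\lambda$ gives the crude bound $|\bar{\mathbf{u}}_k(2R)|\le 3\lambda$ and hence $D_k \le C(A^2 + A\lambda)$. Assembling all cases proves~\eqref{eq31}.

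For the pointwise bound~\eqref{eq31-1}, set $K := A+\lambda+A^{1/3}/R^{2/3}$ and pick $r^*\in[R,R_k]$ maximizing $|\bar{\mathbf{w}}_k|$. If $M := |\bar{\mathbf{w}}_k(r^*)|\le\lambda$ the bound is immediate, so assume $M>\lambda$. Theorem~\ref{th:estim1} on $[r^*,R_k]$ now gives $M-\lambda \le C_*(1 + 1/(r^*M))\sqrt{D_k}$. When $r^*M\ge 1$, using $\sqrt{D_k}\le\sqrt{C_1 AK}$ from~\eqref{eq31} and the bound $\sqrt{AK}\le K$ (valid since $A\le K$) yields $M\le\lambda+C\sqrt{AK}\le C'K$. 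When $r^*M<1$, the inequality rearranges to $RM(M-\lambda)\le C\sqrt{AK}$, and since $M(M-\lambda)\ge(M-\lambda)^2$, we get $M-\lambda\le C(AK)^{1/4}/R^{1/2}$; the key observation $K^3 R^2\ge (A^{1/3}/R^{2/3})^3 R^2 = A$ then ensures $(AK)^{1/4}/R^{1/2}\le K$ and hence $M\le C'K$, proving~\eqref{eq31-1}.

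The principal obstacle is extracting the sharp exponent $A^{1/3}/R^{2/3}$: the bare energy identity alone produces only $D_k\lesssim A^2 + A/R$, which has the wrong $R$-scaling. It is precisely the correction factor $1/(2Rm)$ from Theorem~\ref{th:estim1} in the small-$m$ regime, combined with the self-consistent cubic inequality $R^2m^3\lesssim A$, that exchanges a naive power of $1/R$ for the correct fractional combination of $A$ and $R$.
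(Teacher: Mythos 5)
Your proposal is correct and follows essentially the same route as the paper: the cutoff energy identity combined with \eqref{scale3}, then Theorem~\ref{th:estim1} with a case split on the size of $R\me$, producing the same self-consistent cubic inequality that yields the $A^{1/3}/R^{2/3}$ term. The only (cosmetic) difference is that you run the argument in two passes — first at the fixed radius $2R$ to close the Dirichlet bound, then at the maximizing radius for \eqref{eq31-1} — whereas the paper applies Theorem~\ref{th:estim1} once at the maximizing radius $r_k$ and obtains both estimates together.
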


\begin{remark}
\label{scale-i}
The considered problem~(\ref{NSE-k}) has the following scaling property: if $\mathbf{w}_k$ is a solution to~(\ref{NSE-k}) with parameters $(A,\lambda,R)$, then for any $\tau>0$ the mapping $\we_{\tau k}(z):=\tau\we_k(\tau z)$ is a solution to~(\ref{NSE-k}) with the corresponding parameters 
$(\tau A,\,\tau\lambda,\,\tau^{-1}R)$. Note, that basic estimates in theorems~\ref{th:estim1},\,\ref{th:estim2},\,\ref{th-uniform-bound},\,\ref{th-s1}--\ref{th-s2},\,\ref{th-uniq} are scaling invariant. 
\end{remark}

\begin{proof}
Let $\chi$ be a smooth cut-off function satisfying $\chi \equiv 1 \mathrm{\ on\ } B_R$, $\chi = 0$ outside $ B_{2R}$, and $|\nabla\chi|\le \frac{C}R$. By the system \eqref{NSE-k} and the assumptions on $\mathbf{f}$, we have the following direct estimate of the Dirichlet integral
\begin{align} \label{unif-eq-ee}
D_k = \int_{{B_{R_k}}}|\nabla \mathbf{w}_k|^2 &= \int_{B_{R_k}} \mathbf{f} \cdot (\mathbf{w}_k - \mathbf{w}_\infty)  \nonumber \\
&= \int_{B_{R_k}} \chi \mathbf{f} \cdot (\mathbf{w}_k - \mathbf{w}_\infty) \nonumber \\
&\le \|\mathbf{f}\|_{H^{-1}(B_{2R})} \|\chi(\mathbf{w}_k - \mathbf{w}_\infty)\|_{H_0^1(B_{2R})}  \nonumber \\
&\le C\,A \biggl(\|\nabla\mathbf{w}_k\|_{L^2(B_{2R})} +\frac1R\|\mathbf{w}_k - \mathbf{w}_\infty\|_{L^2(B_{2R})}\biggr)  \nonumber \\
&\le C\,A \bigl( D_k^\frac12 +  |\bar{\mathbf{w}}_k(2R)| + \lambda\bigr),
\end{align}
here we used (\ref{scale3}) for the last inequality
(the exact values of $C$ may change from line to line). 
Using \eqref{unif-eq-ee} it is easy to deduce
\begin{equation} \label{unif-eq-e-uk}
D_k \le C\,A (A+|\bar{\mathbf{w}}_k(2R)| + \lambda).
\end{equation}
Therefore,
\begin{equation} \label{unif-eq-e}
D_k \le C\,A (A+\Lambda_k+ \lambda),
\end{equation}
where we denote
$\Lambda_k =\max_{R \le r \le R_k} |\bar{\mathbf{w}}_k(r)|$. Let $\Lambda_k=|\bar{\mathbf{w}}_k(r_k)|$ with some $r_k\in[R,R_k]$. 
Applying Theorem \ref{th:estim1} in the annulus type domain $\Omega_{r_k, R_k}$ and inserting \eqref{unif-eq-e}, we obtain
\begin{align} \label{unif-eq-1e}
|\Lambda_k - \lambda| &\le C_*(1+ \frac{1}{R\Lambda_k})\sqrt{D_k} \nonumber \\
&\le C (1+ \frac{1}{R\Lambda_k})\sqrt{A (A + \Lambda_k + \lambda)} 
\end{align}
Now we have to consider two different cases:

\medskip
{\sc Case 1.}  $\Lambda_k\le 2(A+\lambda)$. Then the required estimates~(\ref{eq31})--(\ref{eq31-1}) follow immediately.

\medskip
{\sc Case 2.} Now suppose that $\Lambda_k> 2(A+\lambda)$
Then from (\ref{unif-eq-1e}) we obtain:
\begin{equation} \label{unif-eq-w-d3}
 \Lambda_k\le C (1+ \frac{1}{R\Lambda_k})\sqrt{A \Lambda_k} 
\end{equation}
If $\frac1{R\Lambda_k}\le 2$, then, similarly to the Case~1, we have $\Lambda_k\le C\,A$, and the required estimate~(\ref{eq31}) follow easily from~(\ref{unif-eq-e}). So we can assume in addition that~$\frac1{R\Lambda_k}>2$. Then
\begin{equation} \label{unif-eq-w-d4}
 \Lambda_k\le C \frac{1}{R\Lambda_k}\sqrt{A \Lambda_k} \le C \frac{1}{R}\sqrt{\frac{A}{\Lambda_k}}.
\end{equation}
Therefore
\begin{equation} \label{unif-eq-w-d5}
 \Lambda_k\le C \frac{A^{\frac13}}{R^{\frac23}},
\end{equation}
and finally the required estimates~(\ref{eq31})--(\ref{eq31-1}) are~valid in this case as~well.
\end{proof}

\begin{corollary} \label{cor-convergence}
Under the hypothesis of Theorem \ref{th-uniform-bound}, for any sequence $R_k \to \infty$, there exists a subsequence of the invading domain solutions $\mathbf{w}_k$ to \eqref{NSE-k} which converges weakly  to a $D$-solution $\mathbf{w}_L$ to the Navier--Stokes equations \eqref{NSE}$_{1,2}$. Moreover, the convergence is strong in the $C^m_\loc$-topology in $\mathbb{R}^2 \setminus \bar{B}_R$  for any $m \ge 0$, and also strong in $L^2(B_{2R})$.
\end{corollary}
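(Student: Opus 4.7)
The proof is a standard compactness argument, enabled by the uniform bounds already established. My plan is: first, convert \eqref{eq31}--\eqref{eq31-1} into uniform local $H^1$-bounds on $\mathbf{w}_k$; second, via Rellich--Kondrachov and a diagonal extraction, produce a subsequence converging weakly in $H^1_\loc(\mathbb{R}^2)$ and strongly in $L^q_\loc(\mathbb{R}^2)$ for every $q<\infty$; third, pass to the limit in the weak formulation of \eqref{NSE-k} and identify the limit as a $D$-solution of \eqref{NSE}$_{1,2}$; fourth, bootstrap to $C^m_\loc$-convergence on the force-free region $\mathbb{R}^2 \setminus \bar{B}_R$.

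For the uniform local $H^1$-bounds, fix $\rho \ge 2R$ and consider $k$ large enough that $R_k \ge \rho$. Applying \eqref{scale3} to each component of $\mathbf{w}_k$ gives
$$\|\mathbf{w}_k\|_{L^2(B_\rho)} \le C\,\rho\bigl(|\bar{\mathbf{w}}_k(\rho)| + \|\nabla \mathbf{w}_k\|_{L^2(B_\rho)}\bigr),$$
and the right-hand side is bounded uniformly in $k$ by Theorem~\ref{th-uniform-bound}. Hence $\{\mathbf{w}_k\}$ is bounded in $H^1(B_\rho)$ uniformly in $k$, for every fixed $\rho$. Banach--Alaoglu combined with the compact embedding $H^1(B_\rho) \hookrightarrow L^q(B_\rho)$ (which in 2D holds for every $q<\infty$), plus a standard diagonal extraction over an exhaustion of $\mathbb{R}^2$ by balls, yields a subsequence (still denoted $\mathbf{w}_k$) and a limit $\mathbf{w}_L \in H^1_\loc(\mathbb{R}^2)$ with $\mathbf{w}_k \rightharpoonup \mathbf{w}_L$ weakly in $H^1_\loc$ and $\mathbf{w}_k \to \mathbf{w}_L$ strongly in $L^q_\loc$ for every finite~$q$. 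The claimed strong $L^2(B_{2R})$-convergence is a special case.

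Next, I pass to the limit in \eqref{NSE-k}. Testing against an arbitrary divergence-free $\boldsymbol{\varphi} \in C_c^\infty(\mathbb{R}^2)$, the linear terms converge by weak $H^1_\loc$-convergence, while $\int \mathbf{w}_k \otimes \mathbf{w}_k : \nabla \boldsymbol{\varphi}$ converges to $\int \mathbf{w}_L \otimes \mathbf{w}_L : \nabla \boldsymbol{\varphi}$ because $\mathbf{w}_k \otimes \mathbf{w}_k \to \mathbf{w}_L \otimes \mathbf{w}_L$ strongly in $L^1$ on $\mathrm{supp}\,\boldsymbol{\varphi}$. Hence $\mathbf{w}_L$ is a weak solution of \eqref{NSE}$_{1,2}$. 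By weak lower semicontinuity of the Dirichlet energy and \eqref{eq31},
$$\int_{\mathbb{R}^2}|\nabla \mathbf{w}_L|^2 \le \varliminf_{k\to\infty} \int_{B_{R_k}}|\nabla \mathbf{w}_k|^2 < \infty,$$
so $\mathbf{w}_L$ is a $D$-solution.

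For the $C^m_\loc$-convergence on $\mathbb{R}^2 \setminus \bar{B}_R$, note that $\mathbf{f} \equiv 0$ there, so each $\mathbf{w}_k$ solves the homogeneous stationary Navier--Stokes system on this open set. Standard interior regularity and bootstrap for Navier--Stokes --- based on the uniform local $H^1$-bound, the 2D embedding $H^1 \hookrightarrow L^q$ for every $q<\infty$, and Calder\'on--Zygmund estimates applied to the associated Stokes system (treating $(\mathbf{w}_k \cdot \nabla)\mathbf{w}_k$ as a forcing term in progressively better spaces) --- produce $k$-uniform $C^m$-bounds on every compact subset, for every $m \ge 0$. A further diagonal extraction combined with Arzel\`a--Ascoli upgrades the weak convergence to $C^m_\loc$-convergence outside $\bar{B}_R$. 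No single step presents a genuine obstacle; the essential difficulty was already overcome in proving the $k$-uniform bounds of Theorem~\ref{th-uniform-bound}.
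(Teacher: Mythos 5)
Your proposal is correct and follows essentially the same route as the paper: uniform local $H^1$ bounds from Theorem~\ref{th-uniform-bound} (via \eqref{scale3}), weak compactness plus Rellich for the weak limit and the strong $L^2(B_{2R})$ convergence, and standard interior regularity with Arzel\`a--Ascoli for the $C^m_{\mathrm{loc}}$ convergence outside $\bar B_R$. The only difference is that you spell out the passage to the limit in the weak formulation and the bootstrap, which the paper delegates to a citation.
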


\begin{proof}
Theorem \ref{th-uniform-bound} implies that
\begin{equation} \label{eq-710}
\|\mathbf{w}_k\|_{H^1(B_\rho)} \le C(A, R, \lambda, \rho)
\end{equation}
uniformly in $k$, for any $\rho>0$.  Hence, we can extract a subsequence of $\mathbf{w}_k$ which converges weakly to some function $\mathbf{w}_L \in H^1_{\mathrm{loc}}(\mathbb{R}^2)$ satisfying the equations \eqref{NSE}$_{1,2}$ and 
\begin{equation}
\int_{\mathbb{R}^2} |\nabla \mathbf{w}_L|^2 \le \varliminf\limits_{k\to \infty} \int_{B_{R_k}} |\nabla \mathbf{w}_k|^2 \le C(A, R, \lambda)
\end{equation}
By the standard local regularity theory for stationary Navier--Stokes equations (see, \emph{e.g.}, \cite[Section 2.3]{KR21}), we can obtain uniform local $C^m$ bounds for $\mathbf{w}_k$ in $\mathbb{R}^2 \setminus \bar{B}_R$ (outside the support of $\mathbf{f}$) for any $m > 0$. This together with Arzela-Ascoli theorem implies the local strong $C^m$ convergence in $\mathbb{R}^2 \setminus \bar{B}_R$. Finally, \eqref{eq-710} and the compact embedding $H^1(B_{2R}) \subset L^2(B_{2R})$ implies the strong convergence in $L^2(B_{2R})$.
\end{proof}

\section{The limiting velocity of $\mathbf{w}_L$} \label{sec-limit}

Let $\mathbf{w}_k$ be the invading domain solutions that we studied in the last section. By passing to a subsequence, we assume that $\mathbf{w}_k$ converges weakly to $\mathbf{w}_L$ in the sense described in Corollary \ref{cor-convergence}. In this section, we study the fundamental question: \emph{does $\mathbf{w}_L$ achieve the prescribed limiting velocity $\mathbf{w}_\infty = \lambda \mathbf{e}_1$ at spatial infinity?} 

Since $\mathbf{w}_L$ is a $D$-solution to the Navier--Stokes equations (without force) in the exterior domain $\mathbb{R}^2 \setminus B_{2R}$, it has some finite uniform limit at spatial infinity (this important fact is established through the papers \cite{GW, Amick, KPR}). Let us denote $\mathbf{w}_0 = \lim_{r\to \infty} \mathbf{w}_L$.  We state a simple but crucial identity for the tail Dirichlet integral
\begin{equation} \label{eq-81}
D_* = \lim_{r \to \infty} \overline{\lim_{k \to \infty}}  \int_{\Omega_k \cap \{|z| \ge  r\}} |\nabla \mathbf{w}_k|^2 .
\end{equation}

\begin{lemma} \label{lem-D*}
If $\mathbf{w}_0 \neq 0$, then $D_* = - \mathcal{F} \cdot (\mathbf{w}_\infty - \mathbf{w}_0)$.
\end{lemma}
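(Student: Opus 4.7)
The plan is to derive two energy identities, one for the approximations $\we_k$ on $B_{R_k}$ and an ``intrinsic'' one for the limit $\we_L$ on $\R^2$, and subtract them. Testing \eqref{NSE-k} with $\we_k-\we_\infty\in H^1_0(B_{R_k})$ and integrating by parts (using $\nabla\cdot\we_k=0$) yields the straightforward identity
\begin{equation*}
\int_{B_{R_k}}|\nabla \we_k|^2 = \langle \fe,\we_k-\we_\infty\rangle.
\end{equation*}
Sending $k\to\infty$, the right-hand side converges to $\langle \fe,\we_L-\we_\infty\rangle$ because $\fe\in H^{-1}(B_{2R})$ and $\we_k\rightharpoonup \we_L$ weakly in $H^1(B_{2R})$. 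For the left-hand side I would split the integral into $B_r$ and $B_{R_k}\setminus B_r$ with a fixed $r>2R$. On $B_r$, a standard Navier--Stokes compactness argument (writing the equation for the difference $\we_k-\we_L$ and using Sobolev/Rellich in 2D) upgrades weak $H^1$ convergence to strong, so $\int_{B_r}|\nabla \we_k|^2\to\int_{B_r}|\nabla \we_L|^2$. Letting $r\to\infty$ and invoking the definition of $D_*$, I obtain
\begin{equation*}
D_L + D_* = \langle \fe,\we_L-\we_\infty\rangle,\qquad D_L:=\int_{\R^2}|\nabla \we_L|^2. \qquad (\star)
\end{equation*}

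The second, intrinsic, identity I would establish is
\begin{equation*}
D_L = \langle \fe,\we_L-\we_0\rangle. \qquad(\star\star)
\end{equation*}
Formally this is testing \eqref{NSE} for $\we_L$ against $\we_L-\we_0$, which is natural because $\we_L-\we_0\to 0$ at infinity. Rigorously, I would test on the ball $B_\rho$ to obtain
\begin{equation*}
\int_{B_\rho}|\nabla \we_L|^2 + \tilde{\mathcal I}(\rho) = \langle \fe,\we_L-\we_0\rangle,
\end{equation*}
where $\tilde{\mathcal I}(\rho)$ is the boundary flux on $\partial B_\rho$ produced by integration by parts (a combination of $-\partial_r\we_L\cdot(\we_L-\we_0)$, a Bernoulli-type term, and a pressure trace). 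This identity already forces $\tilde{\mathcal I}(\rho)$ to have a limit as $\rho\to\infty$, so it suffices to produce one subsequence $\rho_n\to\infty$ along which $\tilde{\mathcal I}(\rho_n)\to 0$. Such good circles satisfying $\rho_n\int_{\partial B_{\rho_n}}|\nabla \we_L|^2\to 0$ exist by Fubini since $D_L<\infty$; combined with the angular cancellation $\int_{\partial B_\rho}\ee_r=0$, the divergence-free identity $\int_{\partial B_\rho}(\we_L-\we_0)\cdot\ee_r=0$, and the uniform pointwise decay $\we_L-\we_0\to 0$ and $p_L-p_\infty\to 0$ at infinity (available from the exterior $D$-solution theory \cite{GW,Amick,KPR} precisely when $\we_0\neq 0$), every term in $\tilde{\mathcal I}(\rho_n)$ can be controlled and shown to vanish.

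Combining $(\star)$ and $(\star\star)$ by subtraction gives the claim,
\begin{equation*}
D_* = \langle \fe,\we_L-\we_\infty\rangle-\langle \fe,\we_L-\we_0\rangle = \mathcal{F}\cdot(\we_0-\we_\infty) = -\mathcal{F}\cdot(\we_\infty-\we_0).
\end{equation*}
The hard part will be the intrinsic identity $(\star\star)$: bounding the Bernoulli and pressure contributions to $\tilde{\mathcal I}(\rho_n)$ requires the finer pointwise asymptotic decay of $\we_L$ that is only guaranteed in the non-degenerate regime $\we_0\neq 0$, which is precisely why this regime appears in the hypothesis.
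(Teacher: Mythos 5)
Your proof follows essentially the same route as the paper: your identities $(\star)$ and $(\star\star)$ are exactly the paper's \eqref{eq-82} (combined with the tail/bulk splitting of the Dirichlet integral defining $D_*$) and \eqref{eq-83}, and the conclusion is the same subtraction. The one point to tighten is the justification of $(\star\star)$: uniform convergence $\we_L\to\we_0$ from \cite{GW,Amick,KPR} alone does not control the Bernoulli flux $\int_{S_\rho}(\we_L\cdot\ee_r)\,|\we_L-\we_0|^2$ even on good circles, so one really needs the quantitative Oseen/wake decay rates of \cite{S,Sm} valid when $\we_0\neq0$ --- which is precisely what the paper cites and what you anticipate in your closing remark.
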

\begin{proof}
Recall the following energy equalities for the stationary Navier--Stokes solutions $\mathbf{w}_k$ and $\mathbf{w}_L$ respectively,
\begin{equation} \label{eq-82}
D_k = \int_{B_{R_k}} |\nabla \mathbf{w}_k|^2 = \int_{B_{R_k}} \mathbf{f} \cdot (\mathbf{w}_k - \mathbf{w}_\infty) = \int_{B_{R_k}} \mathbf{f} \cdot \mathbf{w}_k  -  \mathcal{F} \cdot \mathbf{w}_\infty,
\end{equation}
\begin{equation} \label{eq-83}
D_L = \int_{\mathbb{R}^2} |\nabla \mathbf{w}_L|^2 = \int_{\mathbb{R}^2} \mathbf{f} \cdot (\mathbf{w}_L - \mathbf{w}_0) = \int_{\mathbb{R}^2} \mathbf{f} \cdot \mathbf{w}_L  -  \mathcal{F} \cdot \mathbf{w}_0.
\end{equation}
The last line relies on the asymptotic behaviour of $\mathbf{w}_L$ at spatial infinity when $\mathbf{w}_0 \neq 0$. By the result of \cite{S}, $\mathbf{w}_L$ is \emph{physically reasonable} in the sense of \cite[page 350]{Sm}. In \cite[Theorem 5]{Sm}  it is proved that $\mathbf{w}_L$ exhibits the wake region behaviour, and has the polynomial convergence rates at spatial infinity indicated by the Oseen fundamental tensor (with some logarithmic correction). Thus \eqref{eq-83} can be obtained with multiplying \eqref{NSE}$_1$ by $\mathbf{w}_L - \mathbf{w}_0$ and integrating on $B_\rho$, then sending $\rho \to \infty$. Observe that $\int \mathbf{f} \cdot \mathbf{w}_k \to \int \mathbf{f} \cdot \mathbf{w}_L$ since $\mathbf{w}_k \rightharpoonup \mathbf{w}_L$ in $H^1(B_{2R})$. It remains to conclude using the simple fact that
\begin{align}
D_* &= \lim_{r \to \infty}  \overline{\lim_{k \to \infty}}  \left\{ D_k -\int_{\Omega_k \cap \{|z| \le  r\}} |\nabla \mathbf{w}_k|^2  \right\} \\
&=  \overline{\lim_{k \to \infty}}  D_k - D_L.
\end{align}
\end{proof}

Next, we justify the equality $\mathbf{w}_0 = \mathbf{w}_\infty$ in two different scenarios.

\subsection{Scenario I : $\lambda$ is much larger than force} \label{sec-81}

\begin{theorem}  \label{th-s1}
There exists an universal constant $\e>0$ such that, 
under the hypothesis of Theorem \ref{th-uniform-bound}, if
 \begin{equation} \label{eq-s1-1}
A\le \frac{\e^2}{\ln^\frac12  \left(\frac{1}{\lambda R} + 2\right)}\lambda,
 \end{equation}
then we have
\begin{enumerate}
\item $\int_{B_{R_k}} |\nabla \mathbf{w}_k|^2 \le C_1 \e^2 \lambda^2$, for $R_k \ge 2\hat{R} := 2\max\{R, \frac{1}{\lambda}\}$,
\item $\max_{\hat{R} \le r \le R_k}|\bar{\mathbf{w}}_k(r)| \le C_2\lambda$,
\item The limiting solution $\mathbf{w}_L$ satisfies the condition $\mathbf{w}_0 = \mathbf{w}_\infty = \lambda \mathbf{e}_1$.
\end{enumerate} 
Here again $C_i$ are some universal constants (do not depend on~$\we_k$, $A$, $\lambda$, etc.). If, in addition, the total force $\mathcal{F} = 0$, then the factor $\ln^\frac12  \left(2+\frac{1}{\lambda R}\right)$ in \eqref{eq-s1-1} can be removed.
\end{theorem}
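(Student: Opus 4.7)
My plan is to proceed in three stages. First, convert the smallness hypothesis~\eqref{eq-s1-1} into the quantitative bounds~(1) and~(2) by combining Lemma~\ref{lem-change-of-domain} with Theorem~\ref{th-uniform-bound}. Second, apply the sharp asymptotic estimate of Theorem~\ref{th:estim2} to control $|\mathbf{w}_0 - \mathbf{w}_\infty|$ by the tail Dirichlet energy $D_*$. Third, close the loop through Lemma~\ref{lem-D*} to force $D_* = 0$ and hence $\mathbf{w}_0 = \mathbf{w}_\infty$. The logarithmic factor in~\eqref{eq-s1-1} is precisely the price of enlarging the $H^{-1}$-test domain from $B_{2R}$ to $B_{2\hat R}$ with $\hat R = \max\{R, 1/\lambda\}$; it disappears in the $\mathcal{F} = 0$ case because Lemma~\ref{lem-change-of-domain} does so as well.

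\emph{Step 1 (uniform bounds).} I would set $\hat R := \max\{R, 1/\lambda\}$ and use Lemma~\ref{lem-change-of-domain} to pass from $A$ to $\tilde A := \|\mathbf{f}\|_{H^{-1}(B_{2\hat R})} \le C(1+\ln^{1/2}(1/(\lambda R)))\,A$; by~\eqref{eq-s1-1} this gives $\tilde A \le C\varepsilon^2\lambda$. Since $\mathrm{supp}\,\mathbf{f}\subset B_R \subset B_{\hat R}$, Theorem~\ref{th-uniform-bound} applied to the pair $(\tilde A, \hat R)$ yields
$$D_k \le C\tilde A\bigl(\tilde A + \lambda + \tilde A^{1/3}/\hat R^{2/3}\bigr)\le C\varepsilon^2\lambda^2,\qquad \max_{\hat R\le r\le R_k}|\bar{\mathbf{w}}_k(r)|\le C\lambda,$$
where $\hat R\ge 1/\lambda$ is used to absorb the last term via $\tilde A^{1/3}/\hat R^{2/3}\le \tilde A^{1/3}\lambda^{2/3}\lesssim \varepsilon^{2/3}\lambda$. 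This settles items~(1) and~(2).

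\emph{Step 2 (preliminary bound via Theorem~\ref{th:estim2}).} The restriction of $\mathbf{w}_L$ to $\mathbb{R}^2\setminus\bar B_R$ is a force-free $D$-solution, so the Gilbarg--Weinberger--Amick theory supplies a uniform spatial limit $\mathbf{w}_0$. I would apply Theorem~\ref{th:estim2} on the annuli $\Omega_{r_{1k},\,R_k}$, taking $r_{2k}:=R_k$ (where the outer hypothesis is trivially satisfied by the boundary condition~\eqref{NSE-k}$_3$) and choosing $r_{1k}\to\infty$ very slowly (say $r_{1k}=\log R_k$) via a diagonal argument: the $C^1_{\mathrm{loc}}$-convergence $\mathbf{w}_k\to\mathbf{w}_L$ outside $\bar B_R$ from Corollary~\ref{cor-convergence}, combined with $\mathbf{w}_L\to\mathbf{w}_0$ at infinity, produces such a sequence with $\max_{S_{r_{1k}}}|\mathbf{w}_k-\mathbf{w}_0|\to 0$. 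Theorem~\ref{th:estim2} then gives
$$|\mathbf{w}_0 - \mathbf{w}_\infty|\le C_{**}\,\frac{D_*}{m},\qquad m = \max\{|\mathbf{w}_0|,\lambda\}\ge\lambda,$$
and since $D_*\le \limsup_k D_k\le C\varepsilon^2\lambda^2$ by Step~1, I obtain the preliminary bound $|\mathbf{w}_0-\mathbf{w}_\infty|\le C\varepsilon^2\lambda$; in particular $|\mathbf{w}_0|\ge \lambda/2$ for $\varepsilon$ sufficiently small.

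\emph{Step 3 (bootstrap and conclusion).} Since $\mathbf{w}_0\neq 0$, Lemma~\ref{lem-D*} applies and gives $D_* = -\mathcal{F}\cdot(\mathbf{w}_\infty - \mathbf{w}_0)$. Using $|\mathcal{F}|\le CA \le C\varepsilon^2\lambda$ via~\eqref{scale2}, together with $m\ge\lambda/2$ and the Theorem~\ref{th:estim2} inequality from Step~2,
$$D_* \le |\mathcal{F}|\,|\mathbf{w}_\infty-\mathbf{w}_0|\le CA\cdot\frac{C_{**}D_*}{m}\le C'\varepsilon^2\,D_*.$$
Choosing $\varepsilon$ so that $C'\varepsilon^2 < 1$ forces $D_* = 0$; feeding this back into Theorem~\ref{th:estim2} delivers $\mathbf{w}_0=\mathbf{w}_\infty$, proving item~(3). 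In the $\mathcal{F}=0$ case, Lemma~\ref{lem-change-of-domain} enlarges $R$ without a logarithmic cost so~\eqref{eq-s1-1} simplifies to $A\le\varepsilon^2\lambda$; moreover Lemma~\ref{lem-D*} directly gives $D_*=0$, bypassing the smallness bootstrap. The main conceptual hurdle is this last self-improving step: neither Theorem~\ref{th:estim2} nor Lemma~\ref{lem-D*} alone produces equality, and it is the quadratic (rather than linear) dependence of the preliminary error on $\varepsilon$---the genuine gain of Theorem~\ref{th:estim2} over the coarser Theorem~\ref{th:estim1}---that makes the fixed-point inequality contractive and forces $D_* = 0$.
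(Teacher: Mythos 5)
Your proposal is correct and follows essentially the same route as the paper: Claims 1--2 via Lemma \ref{lem-change-of-domain} plus Theorem \ref{th-uniform-bound} with the enlarged radius $\hat R=\max\{R,1/\lambda\}$, and Claim 3 by combining Theorem \ref{th:estim2} with Lemma \ref{lem-D*} in a self-improving absorption argument. The only cosmetic difference is that you first derive $|\mathbf{w}_0|\ge\lambda/2$ and then contract on $D_*$, whereas the paper contracts directly on $|\mathbf{w}_\infty-\mathbf{w}_0|$ with a short case distinction on whether $\mathbf{w}_0=0$; the two are equivalent.
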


\begin{proof}

To prove Claims~1--2, we consider two separate cases:

\smallskip
{\bf Case~I: $\lambda R \ge 1$.} In this case, $ \ln^\frac12  \left(\frac{1}{\lambda R} + 2\right)  \sim 1$. The term $\frac{A^{\frac13}}{R^{\frac23}}$ in \eqref{eq31}--\eqref{eq31-1}  can be estimated as
\begin{align}\label{est-sc1-1}
\frac{A^{\frac13}}{R^{\frac23}}=\frac{A^{\frac13}\lambda^{\frac23}}{R^{\frac23}\lambda^{\frac23}}&\le  A^{\frac13}\lambda^{\frac23}  \overset{\eqref{eq-s1-1}}{\le} \e^\frac23 \lambda
\end{align}
Then Claims~1--2 follows immediately from \eqref{eq-s1-1} and Theorem~\ref{th-uniform-bound}.

\smallskip
{\bf Case~II: $\lambda R < 1$.} By Lemma \ref{lem-change-of-domain}, we have 
\begin{equation} 
\|\mathbf{f}\|_{H^{-1}(B_\frac{2}{\lambda})} \le C \left( 1 + \ln ^\frac12 \left(\frac{1}{\lambda R}\right) \right) \|\mathbf{f}\|_{H^{-1}(B_{2R})}
\end{equation}
It is easy to check that, in this case, $ 1 + \ln ^\frac12 \left(\frac{1}{\lambda R}\right)  \sim \ln^\frac12  \left(\frac{1}{\lambda R} + 2\right)$. Since $\frac{1}{\lambda} > R$, $\mathbf{f}$ is of course compactly supported in $B_{\frac{1}{\lambda}}$. Hence, we can reduce to Case I with the new parameters $$R' = \frac{1}{\lambda}$$ and
$$A' \le C \ln^\frac12 \left(\frac{1}{\lambda R} + 2\right) A.$$
Hence, Claims~1--2 are again valid in this case. 

Note that when $\mathcal{F} = 0$, the logarithmic factors can be removed due to the last statement in Lemma \ref{lem-change-of-domain}.

\smallskip
To prove Claim~3, notice that we are now in a position similar to \cite{KR21new}. More precisely,  if $\e<\e_0$ with $\e_0$ small enough, then we have

\smallskip

(a) $D_k = \int |\nabla \mathbf{w}_k|^2  \le \e \lambda^2$,

\smallskip

(b) The total force $|\mathcal{F}| = \left| \int_{\mathbb{R}^2} \mathbf{f} \right| \le \e \lambda$.

\smallskip

These are the key ingredients to invoke the proof of \cite[Theorem 1]{KR21new} (see Section 6 there). Here we can simplify the proof essentially using our second basic estimate in Theorem~\ref{th:estim2}. Namely, using Corollary~\ref{cor-convergence}, we can find $r_{1k} \to \infty$ such that  \eqref{eq:r-ass} and \eqref{e-eq:estim-a1} are valid. By Theorem \ref{th:estim2} we have
 \begin{equation} \label{eq-86}
|\we_0-\we_\infty|\le C_{**}\frac{D_*}{\lambda},
 \end{equation}
where $D_*$ is the tail Dirichlet energy defined in \eqref{eq-81} which controls the limit $\varliminf\limits_{k\to\infty}\int_{\Omega_{r_{1k},r_{2k}}}|\nabla\we_k|^2$. By Lemma \ref{lem-D*} and (a), (b), we have
\begin{equation} \label{eq-87}
D_* \le \begin{cases}
|\mathcal{F}| |\mathbf{w}_\infty - \mathbf{w}_0| \le \e \lambda |\mathbf{w}_\infty - \mathbf{w}_0| , \quad \mathrm{if} \ \mathbf{w}_0 \neq 0, \\
 \varliminf\limits_{k\to \infty} D_k \le \e \lambda^2, \quad \mathrm{if} \ \mathbf{w}_0 = 0.
\end{cases}
\end{equation}
By \eqref{eq-86} -- \eqref{eq-87} we obtain $|\mathbf{w}_\infty - \mathbf{w}_0| \le C_{**} \e|\mathbf{w}_\infty - \mathbf{w}_0|$ which, by taking $\e$ small, immediately implies $\mathbf{w}_\infty = \mathbf{w}_0$.

\end{proof}

\subsection{Scenario II :  zero total force, and $\lambda = 0$}

\begin{theorem}\label{th-s2}
Under the hypothesis of Theorem \ref{th-uniform-bound}, assume in addition that the total force vanishes,   \emph{i.e.}, $\mathcal{F} = \int_{\mathbb{R}^2} \mathbf{f} = \mathbf{0}$. Moreover, we consider zero prescribed limiting velocity, \emph{i.e.}, $\mathbf{w}_\infty = 0$. Then the limiting solution $\mathbf{w}_L$ satisfies the condition $\mathbf{w}_0 = \mathbf{w}_\infty = 0$.
\end{theorem}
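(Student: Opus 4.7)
The overall strategy is proof by contradiction, combining Lemma~\ref{lem-D*} with the second basic estimate from Theorem~\ref{th:estim2}. Corollary~\ref{cor-convergence} already gives a well-defined Leray limit $\mathbf{w}_L$, with $\mathbf{w}_k\to\mathbf{w}_L$ strongly in $C^m_{\mathrm{loc}}(\mathbb{R}^2\setminus \bar B_R)$. Since $\mathrm{supp}\,\mathbf{f}\subset B_R$, the restriction of $\mathbf{w}_L$ to $\mathbb{R}^2\setminus\bar B_{2R}$ is a $D$-solution of the homogeneous Navier--Stokes system in an exterior domain, so the classical results of \cite{GW,Amick,KPR} yield a uniform spatial limit $\mathbf{w}_0:=\lim_{|z|\to\infty}\mathbf{w}_L(z)$.

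Suppose, for contradiction, that $\mathbf{w}_0\neq 0=\mathbf{w}_\infty$. Since $\mathbf{w}_0\neq 0$, Lemma~\ref{lem-D*} applies, and together with the hypothesis $\mathcal{F}=0$ gives
\[
D_{\mathrm{tail}}:=\lim_{r\to\infty}\overline{\lim_{k\to\infty}}\int_{\Omega_k\cap\{|z|\ge r\}}|\nabla\mathbf{w}_k|^2 \;=\; -\mathcal{F}\cdot(\mathbf{w}_\infty-\mathbf{w}_0) \;=\; 0.
\]

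Next, I verify the hypotheses of Theorem~\ref{th:estim2}. Take $r_{2k}:=R_k$: by the boundary condition in \eqref{NSE-k}, $\mathbf{w}_k\equiv\mathbf{w}_\infty=0$ on $S_{R_k}$, so the required convergence on $S_{r_{2k}}$ is trivial. For $r_{1k}$, I exploit the uniform convergence $\mathbf{w}_L(z)\to\mathbf{w}_0$ as $|z|\to\infty$, together with the local $C^0$-convergence $\mathbf{w}_k\to\mathbf{w}_L$ outside $\bar B_R$; a standard diagonal extraction produces a sequence $r_{1k}\to\infty$ with $r_{1k}/R_k\to 0$ and $\max_{z\in S_{r_{1k}}}|\mathbf{w}_k(z)-\mathbf{w}_0|\to 0$, so that \eqref{eq:r-ass} and \eqref{e-eq:estim-a1} hold. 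Since $\Omega_{r_{1k},R_k}\subseteq \Omega_k\cap\{|z|\ge r_{1k}\}$ and $r_{1k}\to\infty$, the quantity $D_*$ appearing in Theorem~\ref{th:estim2} satisfies $D_*\le D_{\mathrm{tail}}=0$, and applying the theorem with $m=|\mathbf{w}_0|>0$ yields
\[
|\mathbf{w}_0| = |\mathbf{w}_0-\mathbf{w}_\infty| \le C_{**}\,\frac{D_*}{|\mathbf{w}_0|} = 0,
\]
contradicting $\mathbf{w}_0\neq 0$.

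The main technical point is the diagonal choice of $r_{1k}$: it must be large enough for $\mathbf{w}_L$ to be close to $\mathbf{w}_0$ on $S_{r_{1k}}$ and for the $C^0$-convergence $\mathbf{w}_k\to\mathbf{w}_L$ on that circle to be effective, yet small compared with $R_k$ so that $r_{2k}/r_{1k}\to\infty$. Once this routine bookkeeping is carried out, the conclusion is an immediate consequence of the two tools above.
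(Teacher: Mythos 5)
Your proposal is correct and follows essentially the same route as the paper: contradiction via Lemma~\ref{lem-D*} (which gives $D_*=0$ when $\mathcal{F}=0$ and $\mathbf{w}_0\neq 0$) combined with Theorem~\ref{th:estim2}. The only difference is that you spell out the diagonal selection of $r_{1k}$ and the choice $r_{2k}=R_k$, details the paper leaves implicit by simply citing Corollary~\ref{cor-convergence}.
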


\begin{proof}
We assume that $\mathbf{w}_0 \neq 0$ and  obtain a contradiction. By Lemma \ref{lem-D*} we have
\begin{equation} \label{eq-88}
D_* = - \mathcal{F} \cdot (0 - \mathbf{w}_0) = 0
\end{equation}
Using Corollary \ref{cor-convergence}, we can find $r_{1k} \to \infty$ such that  \eqref{eq:r-ass} and \eqref{e-eq:estim-a1} are valid. By Theorem \ref{th:estim2} we have
 \begin{equation} \label{eq-89}
|\we_0-\we_\infty|\le C_{**}\frac{D_*}{|\mathbf{w}_0|},
 \end{equation}
where $D_*$ is the tail Dirichlet energy defined in \eqref{eq-81} which controls the limit $\varliminf\limits_{k\to\infty}\int_{\Omega_{r_{1k},r_{2k}}}|\nabla\we_k|^2$. \eqref{eq-88} -- \eqref{eq-89} immediately imply that $\mathbf{w}_0 = \mathbf{w}_\infty$, a contradiction with the initial assumption. 
\end{proof}

\section{A uniqueness theorem in the case of small force} \label{sec-unique}

\begin{theorem} \label{th-uniq} {
There exists a~universal constant $\e>0$ such that, 
under the hypothesis of Theorem \ref{th-uniform-bound}, if
 \begin{equation} \label{eq-uni-1}
  A< \frac{\e^2}{(1+\lambda R)^3\ln^\frac12  \left(2+\frac{1}{\lambda R}\right)}\lambda,
 \end{equation}
then the problem \eqref{NSE} is uniquely solvable in the class of $D$-solutions. If, in addition, the total force $\mathcal{F} = 0$, then the factor $\ln^\frac12  \left(2+\frac{1}{\lambda R}\right)$ in \eqref{eq-uni-1} can be removed.}
\end{theorem}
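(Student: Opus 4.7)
The plan is a weak-strong type uniqueness argument: show that each $D$-solution lies close to the Oseen solution in a Banach space $X$ adapted to the two-dimensional Oseen operator, then show that the difference of any two such $D$-solutions must vanish via a contraction-type estimate.

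First, I would reduce to the regime of Scenario~I. The smallness condition~\eqref{eq-uni-1} is strictly stronger than~\eqref{eq-s1-1}, so the quantitative conclusions of Theorem~\ref{th-s1} are available for any $D$-solution of~\eqref{NSE}: the Dirichlet integral is bounded by $C\e^2\lambda^2$, and the circular means $\bar\we(r)$ stay within $C\lambda$ of $\lambda\ee_1$ for all $r\ge\hat R$. The Scenario~I theorem is phrased for invading-domain limits, so the first subtask is to prove that every $D$-solution is the weak limit of some $\we_k$ (or, equivalently, to rederive the bounds of Theorem~\ref{th-uniform-bound} directly for $\we$ by testing~\eqref{NSE} against cut-offs and invoking Theorems~\ref{th:estim1} and~\ref{th:estim2}).

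Next, I would introduce the Banach space $X$ from~\eqref{eq-Xspace} (an anisotropic Oseen-type space controlling $\dot H^1$ together with wake-adapted integrability) and prove the a priori bound $\|\we-\lambda\ee_1\|_X\le C\e$ for every $D$-solution. This is where the factor $(1+\lambda R)^3$ appears: after the rescaling $z\mapsto \lambda z$ the support of $\fe$ becomes $B_{\lambda R}$, and $X$-estimates for the Oseen resolvent $(-\Delta+\partial_1)^{-1}\nabla\cdot$ applied to the rewritten equation
\begin{equation*}
-\Delta\mathbf{v}+\lambda\partial_1\mathbf{v}+\nabla p=\fe-(\mathbf{v}\cdot\nabla)\mathbf{v},\qquad \mathbf{v}=\we-\lambda\ee_1,
\end{equation*}
together with the quantitative decay from Scenario~I, give $\|\mathbf{v}\|_X\lesssim (1+\lambda R)^{3/2}\,A^{1/2}\lambda^{-1/2}$ or similar, which is $\le C\e$ under~\eqref{eq-uni-1}. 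The logarithm in~\eqref{eq-uni-1} is the cost, via Lemma~\ref{lem-change-of-domain}, of upgrading the $H^{-1}(B_{2R})$ bound on $\fe$ to an $H^{-1}$ bound on the natural Oseen ball $B_{1/\lambda}$; when $\mathcal F=0$ this cost disappears, which matches the stated improvement.

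Finally, take two $D$-solutions $\we,\we'$ and set $\mathbf{u}=\we-\we'$, $\mathbf{v}=\we-\lambda\ee_1$, $\mathbf{v}'=\we'-\lambda\ee_1$. Then $\nabla\cdot\mathbf{u}=0$, $\mathbf{u}\to 0$ at infinity, and
\begin{equation*}
-\Delta\mathbf{u}+\lambda\partial_1\mathbf{u}+(\mathbf{u}\cdot\nabla)\mathbf{v}+(\mathbf{v}'\cdot\nabla)\mathbf{u}+\nabla q=\mathbf 0.
\end{equation*}
Combining the linear Oseen $X$-estimate with the bilinear bound $\|(\aa\cdot\nabla)\bb\|_{X^*}\le C\|\aa\|_X\|\bb\|_X$ gives
\begin{equation*}
\|\mathbf{u}\|_X\le C\bigl(\|\mathbf{v}\|_X+\|\mathbf{v}'\|_X\bigr)\|\mathbf{u}\|_X\le C\e\,\|\mathbf{u}\|_X,
\end{equation*}
which forces $\mathbf{u}\equiv 0$ once $\e$ is sufficiently small.

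The main obstacle is the weak-strong step: a priori $\mathbf{u}$ is only a difference of $D$-solutions, with finite Dirichlet integral but no built-in Oseen-type decay, so membership $\mathbf{u}\in X$ is not automatic. Establishing it requires bootstrapping from the Oseen fundamental-solution representation on a neighbourhood of infinity, exploiting that $(\mathbf{u}\cdot\nabla)\mathbf{v}+(\mathbf{v}'\cdot\nabla)\mathbf{u}$ decays fast enough once $\mathbf{v},\mathbf{v}'$ enjoy the quantitative Scenario~I decay. This bootstrap, carried out carefully in annuli of size $\sim 1/\lambda$ and then on enlarging regions, is the technical heart of the argument and presumably constitutes the Steps~4--5 mentioned in the introduction that also underlie the perturbative existence scheme.
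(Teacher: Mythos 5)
Your plan coincides with the paper's proof: rescale to $\lambda=1$, derive the Scenario-I bounds directly for an arbitrary $D$-solution via the energy equality (justified by the Sazonov--Smith asymptotics), upgrade to the anisotropic pointwise decay defining the space $X$ in \eqref{eq-Xspace}, and close with the Finn--Smith bilinear estimates for the Oseen fundamental tensor split into near and far parts, with Lemma \ref{lem-change-of-domain} accounting for the logarithmic factor exactly as you describe. The only ingredient you leave vague is how the initial uniform pointwise bound $|\we-\ee_1|\le C\,R^{-1}\e$ is obtained before the representation-formula bootstrap of Step 3: the paper gets it from the two-sided maximum principle for Amick's auxiliary function $\gamma=\frac12|\we|^2+p-\omega\psi$ combined with good-circle estimates for the pressure and vorticity gradient, rather than from Oseen resolvent estimates alone.
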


\begin{remark}
 In fact, the essence of the proof below can be best understood by setting $R=1, \lambda =1$.
\end{remark}

\begin{proof}
The existence of a $D$-solution $\mathbf{w}_L$ to \eqref{NSE} when $f$ is sufficiently small was already proved in Theorem \ref{th-s1} based on the invading domains method. We just need to show uniqueness. Using scaling, we may assume, without loss of generality, that $\lambda=1$, \emph{i.e.}, $\mathbf{w}_\infty = \mathbf{e}_1$. In particular, the assumption (\ref{eq-uni-1}) will now read
 \begin{equation} \label{eq-uni-0}
  A< \frac{\e^2}{(1+R)^3\ln^\frac12  \left(2+\frac{1}{R}\right)}. \end{equation}
 By the results of \cite{S, Sm}, $\mathbf{w}$ is physically reasonable in the sense of \cite[page 350]{Sm}. In \cite[Theorem 5]{Sm}, it is proved that $\mathbf{w}$ exhibits the wake region behaviour, and has the polynomial convergence rates at spatial infinity indicated by the Oseen fundamental tensor (with some logarithmic correction). Hence, the following energy equality is valid in whole space,
\begin{equation}
D = \int_{\mathbb{R}^2} |\nabla \mathbf{w}|^2 = \int \mathbf{f} \cdot (\mathbf{w} - \mathbf{e}_1).
\end{equation}
Using the ideas of proving Theorems~\ref{th-uniform-bound}, \ref{th-s1}, we obtain the same type of estimate, that is,  
\begin{equation} \label{eq5D}
D \le C A\lambda\le C\frac{\e^2}{(1+R)^3\ln^\frac12  \left(2+\frac{1}{R}\right)}.
\end{equation}
Below we have to consider two different cases:

\medskip
\centerline{\bf Case~I: $R\ge1$.} 
Then by Claim~2 of Theorem~\ref{th-s1} we obtain 
\begin{equation} \label{eq5supw-}
\sup_{r \ge R} |\bar{\mathbf w}(r)| \le C.
\end{equation}
More precisely, using the first basic estimate of Theorem~\ref{th:estim1}, from (\ref{eq5D}) we obtain immediately
\begin{equation} \label{eq5supw}
\sup_{r \ge R} |\bar{\mathbf w}(r)-\ee_1| \le C\frac{\e}{(1+R)^{3/2}}.
\end{equation}
As a by-product in the proof of Theorem~\ref{th:estim1}, from the estimate~(\ref{f-add8}) we also have 
\begin{equation} \label{eq54}
\int_{r \ge \frac32 R } r |\nabla \omega|^2  \le C\frac{\e^2}{(1+R)^3},
\end{equation}
and consequently,
 \begin{equation} \label{eq54-}
\int_{r\ge \frac32 R }|\nabla \omega|^2  \le C\frac{\e^2}{(1+R)^{4}}.
\end{equation}
Next, the proof will be divided into a few steps.

{
\medskip
\textsc{Step 1.}  For simplicity, we denote
\begin{equation}
\eee=R^{-3/2}\e,
\end{equation}
so that
\begin{equation}\label{equniq-0}
D+\int_{r \ge \frac32 R } r |\nabla \omega|^2  \le C\eee^2,
\end{equation}
\medskip
\begin{equation}\label{equniq-00}
\sup_{r \ge R} |\bar{\mathbf w}(r)-\ee_1| \le C\eee.
\end{equation}
By \eqref{equniq-0}--\eqref{equniq-00}, we can find a good circle $S_{r_*}, r_* \in [\frac32R, 2R]$, on which
\begin{equation}\label{equniq-1}
\max\limits_{z\in S_{r_*}}|\we(z)-\ee_1|\le C\eee
\end{equation}
\begin{equation} \label{equniq-1-1}
\max\limits_{z\in S_{r_*}}|p(z) - \bar{p}(r_*)| \le C \eee.
\end{equation}
Without loss of generality, let us assume that $p \to 0$ as $r \to \infty$. Due to Lemma \ref{lem:pressure}, we have 
\begin{equation}
|\bar{p}(r_*)|\le C\eee^2.
\end{equation}
Using the standard decomposition of pressure (see, \emph{e.g.},~(\ref{decomp0}) or  \cite[Corollary 16]{KR21new}), we deduce that 
\begin{equation}\label{equniq-2}
|p(z)|\le C\eee\qquad\mbox{ for }|z|\ge 2R.
\end{equation}
Following \cite[Section 3]{KR21}\footnote{Amick's special $\omega$-level sets as described in \cite[Lemma 13]{KR21} are still available in the whole plane setting outside the support of $\mathbf{f}$.}, we can find a sequence of good radii $r_k \in [2^kR, 2^{k+1}R), \\ k=1,2, 3, \cdots$ such that
\begin{equation}
|\mathbf{w} - \mathbf{e}_1| \le C \eee,
\end{equation}
on each circle $S_{r_k}$. Locating these ``good circles" is a necessary preparation for Step~2.

\medskip
\textsc{Step 2.} Recall Amick's auxiliary function $\gamma = \frac{|\we|^2}{2} + p - \omega \psi$. Here $\psi$ is the stream function satisfying $\nabla \psi = \we^\perp = -w_2 \mathbf{e}_1 + w_1 \mathbf{e}_2$. Let $\psi(2R, 0) = 0$ to be definite. It is known that $\gamma$ satisfies the two-sided maximum principle and converges to $\frac12$ at infinity, see \cite{Amick}. Using \eqref{equniq-1}--\eqref{equniq-1-1}, (\ref{equniq-2}), and the maximum principle for Bernoulli pressure $\Phi = \frac{|\we|^2}{2} + p$, we have
\begin{equation}
|\mathbf{w}(z)| \le C,
\end{equation}
for all $|z| \ge r_*$.
Similarly, using (\ref{equniq-0})--(\ref{equniq-1-1}) and the two-sided maximum principle for $\gamma$  we can deduce that 
\begin{equation}
|\gamma(z) - \frac{1}{2}| \le C R^\frac12 \eee
\end{equation}
for all $|z| \ge 2R$. The $R^\frac12$ factor shows up here because, on a typical circle $S_{r_*}$, $R<r_*<2R$, the term $\omega \psi$ in the definition of $\gamma$ is bounded by $R^{-\frac12} \eee \cdot R = R^\frac12 \eee$. Following \cite[Appendix~I]{KR21}, and using the~information~from Step~1  
we obtain the key pointwise estimate
\begin{equation} \label{amick-prel}
|\mathbf{w}(z) - \mathbf{e}_1| \le C \, R^\frac12 \, \eee=C\, R^{-1}\, \e,
\end{equation}
for all $|z| \ge 2R$.
Also, from~(\ref{scale3}) and (\ref{equniq-0})--(\ref{equniq-00}) we have 
\begin{equation} \label{ballest}
\|\mathbf{w} - \mathbf{e}_1\|_{L^q(B_{3R})}\le C \,R^{\frac2q-\frac{3}{2}} \, \e,
\end{equation}
in particular, 
\begin{equation} \label{ballest-1}
\|\mathbf{w} - \mathbf{e}_1\|_{L^5(B_{3R})}\le C \,R^{-\frac{11}{10}} \, \e.
\end{equation}
}

\medskip
\textsc{Step 3.} Following \cite[Section 5]{KR21}\footnote{The rescaling procedure in \cite{KR21} is not needed here since $\lambda$ is fixed to be 1. However we need to deal with the $R$-dependence in the estimates here while $R=1$ in \cite{KR21}. The big denominator $(1+R)^3$ in (\ref{eq5D}) is useful to absorb all the $R$-dependence that shows up.}, and setting $\e$ sufficiently small, one can obtain the pointwise decay estimate
\begin{equation} \label{eq5amick}
|(\mathbf{w} - \mathbf{e}_1)_i(z)| \le C \e h_i(z), \quad i = 1,2,
\end{equation}
for $|z| \ge 2R$, where $h_i(z)$ is the majorant function given by \footnote{In \cite{FS, KR21}, $h_i(z)$  is defined as $\log \frac{2}{|z|}$ inside the unit disk. Such a logarithmic singularity plays an crucial role there as it accounts for the delicate boundary effects in the flow around obstacle problem. For the whole plane problem, we only use the definition of $h$ outside $B_1$.}
\begin{align}
 |z|>1 &:  \quad \begin{cases}
                   h_1(z) = |z|^{-\frac12},\\
                   h_2(z) = |z|^{-\frac35}.
                  \end{cases}
\end{align}

\textsc{Step 4.} 
Let
\begin{equation}
I_i(z)(\mathbf{u}, \mathbf{v})= \int_{\mathbb{R}^2} \partial_l E_{ij}(z-z') u_j(z') v_l(z') \, dz'_1 dz'_2:=I^R_i(z)(\mathbf{u}, \mathbf{v})+I^\infty_i(z)(\mathbf{u}, \mathbf{v}),
\end{equation}
where 
\begin{equation}
I^R_i(z)(\mathbf{u}, \mathbf{v}) := \int_{B_{2R}} \partial_l E_{ij}(z-z') u_j(z') v_l(z') \, dz'_1 dz'_2,
\end{equation}
\begin{equation}
I^\infty_i(z)(\mathbf{u}, \mathbf{v}) := \int_{\mathbb{R}^2\setminus B_{2R}} \partial_l E_{ij}(z-z') u_j(z') v_l(z') \, dz'_1 dz'_2.
\end{equation}
Here $\mathbf{E}$ is the fundamental Oseen tensor, i.e, the fundamental solution to the Oseen system:
\begin{equation}  
\left\{
\begin{aligned}
 & \Delta E_{ij} - \partial_1 E_{ij} - \partial_i e_j = \delta_{ij} \delta_0, \quad i,j = 1, 2, \\
 & \sum_{i=1,2} \partial_i E_{ij} = 0, \quad j =1,2,
\end{aligned}
\right.
\end{equation}
where $\delta_0$ is the standard $\delta$-function supported at the origin. For more information on $\mathbf{E}$, see for instance \cite[Section 2.4]{KR21} or \cite[Section 2]{GuilW1}.

Recall, that $\partial_l E_{ij}\in L^q(\R^2)$ for any $q\in(3/2,2)$, in particular,
\begin{equation}\label{bile1}
\partial_l E_{ij}\in L^{5/3}(\R^2)
\end{equation}
(for the list of integrability properties for $E_{ij}$, see, \emph{e.g.}, in~\cite{S}). Define the auxiliary norm
\begin{equation}
\|\mathbf{u}\|_Y : = \sup_{|z| \ge 2 R; i=1,2} \frac{|u_i(z)|}{h_i(z)}.
\end{equation}

We will need a crucial bilinear estimate (see \cite[Lemmas~3.1--3.2]{FS}): there exists an absolute constant $C$ such that 
\begin{equation} \label{eq5bilinear}
\sup_{|z| \ge 1; i=1,2} \frac{|I^\infty_i(z)(\mathbf{u}, \mathbf{v})|}{h_i(z)}\le C\,\|\mathbf{u}\|_Y
\|\mathbf{v}\|_Y.
\end{equation} 
In particular, 
\begin{equation} \label{bile2-} \left| \mathbf I^\infty(z)(\mathbf{u}, \mathbf{v}) \right| \le 
                   C\,\|\mathbf{u}\|_Y
\|\mathbf{v}\|_Y|z|^{-\frac12},\qquad |z|\ge1.
\end{equation} 
Further, from (\ref{bile1}) and the elementary estimate $\|\mathbf{u}\|_{L^5(\R^2\setminus B_{2R})}\le C\,R^{-\frac1{10}}\|\mathbf{u}\|_Y$ we have:
\begin{equation} \label{bile2}\left| \mathbf I^\infty(z)(\mathbf{u}, \mathbf{v}) \right|\le
                   C\,\|\mathbf{u}\|_Y
\|\mathbf{v}\|_YR^{-\frac15},\ \qquad |z|\le1
\end{equation} 
(really, the last pointwise estimate is valid for all $z\in\R^2$, but we will use it in the unit ball only). 
Therefore, from~(\ref{bile2-})--(\ref{bile2}) we conclude:
\begin{equation} \label{bile3}
\|\mathbf I^\infty\|_{L^5(B_{2R})}\le C\,\|\mathbf{u}\|_Y
\|\mathbf{v}\|_Y.
\end{equation} 
\begin{equation} \label{bile4}
\|\mathbf I^\infty\|_{Y}\le C\,\|\mathbf{u}\|_Y
\|\mathbf{v}\|_Y.
\end{equation} 
Similarly, from (\ref{bile1})  and H\"older's inequality we have:
\begin{equation} \label{bile5}
|\mathbf I^R(z)(\mathbf{u}, \mathbf{v})|\le 
                   C\,\|\mathbf{u}\|_{L^5(B_{2R})}
                   \|\mathbf{v}\|_{L^5(B_{2R})},\qquad\ \forall z\in\R^2.
\end{equation} 
In particular,
\begin{equation} \label{bile6}
\|\mathbf I^R\|_{L^5(B_{2R})}\le C\,\|\mathbf{u}\|_{L^5(B_{2R})}
\|\mathbf{v}\|_{L^5(B_{2R})}R^{\frac25}.
\end{equation}
It is well-known that 
$|\nabla\mathbf E(z)|\le C\,\frac{1}{|z|}$  for $|z|\ge1$ (see, \emph{e.g.},~\cite[\S2]{Sm}\,). This implies
\begin{equation} \label{bile7}
|\mathbf I^R(z)(\mathbf{u}, \mathbf{v})|\le 
                   C\,\|\mathbf{u}\|_{L^5(B_{2R})}
                   \|\mathbf{v}\|_{L^5(B_{2R})}\frac{R^\frac65}{|z|},\qquad\ \forall z\in\R^2\setminus B_{2R}.
\end{equation} 
Therefore,
\begin{equation} \label{bile9}
\|\mathbf I^R(\mathbf{u}, \mathbf{v})\|_{Y}\le C\,\|\mathbf{u}\|_{ L^5(B_{2R})}
\|\mathbf{v}\|_{L^5(B_{2R})}R^{ \frac45}.
\end{equation} 
Finally, we can summarize all previous estimates in the following form:
\begin{equation}\label{sum-bil1}
\|\mathbf I(\mathbf{u}, \mathbf{v})\|_{X}\le C\,\|\mathbf{u}\|_X \biggl(
\|\mathbf{v}\|_Y +\|\mathbf{v}\|_{L^5(B_{2R})} R^{\frac45}\biggr),
\end{equation}
and also
 \begin{equation}\label{sum-bil2}
\|\mathbf I(\mathbf{u}, \mathbf{v})\|_{X}\le C\,\|\mathbf{v}\|_X \biggl(
\|\mathbf{u}\|_Y +\|\mathbf{u}\|_{L^5(B_{2R})} R^{\frac45}\biggr),
\end{equation}
where we denote 
\begin{equation} \label{eq-Xspace}
\|\mathbf{u}\|_X : =\|\mathbf{u}\|_Y+ \|\mathbf{u}\|_{L^5(B_{2R})}.
\end{equation}

\textsc{Step 5.} The local uniqueness argument we are going to invoke next is similar to that in \cite[Section 7]{FS}. Suppose $\mathbf{w}^{(1)}, \mathbf{w}^{(2)}$ are two $D$-solution to \eqref{NSE}. Denote $\mathbf{u}^{(k)} := \mathbf{w}^{(k)} - \mathbf{e}_1$, $k =1,2$,
then by Steps~2--3 we have 
\begin{equation} \label{eq5Xsmall}
\|\mathbf{u}^{(k)}\|_X\le C \e, \quad k = 1,2,
\end{equation}
\begin{equation} \label{eq5Xsmall--}
\|\mathbf{u}^{(k)}\|_{L^5(B_{2R})}\le C \e R^{-\frac{11}{10}}, \quad k = 1,2.
\end{equation}
Since $\mathbf{u}^{(k)}$ satisfies the system
\begin{equation}
\left\{
\begin{aligned}
&\Delta \mathbf{u}^{(k)}  - \partial_1 \mathbf{u}^{(k)} - \nabla p^{(k)} = \mathbf{u}^{(k)} \cdot \nabla \mathbf{u}^{(k)} + \mathbf{f},  \\
& \nabla \cdot \mathbf{u}^{(k)} = 0,
\end{aligned}
\right.
\end{equation}
and the decay estimate \eqref{eq5Xsmall}, we obtain the following standard representation formula,
$$
\mathbf{u}^{(k)}(z) = \int_{\mathbb{R}^2} \mathbf{E}(z-z') \cdot \left\{ (\mathbf{u}^{(k)} \cdot \nabla) \mathbf{u}^{(k)} + \mathbf{f} \right\}(z') \, dz'_1 dz'_2, \quad  k=1,2.
$$
From this representation we deduce
\begin{align} \label{eq5udiff}
(u_i^{(1)} - u_i^{(2)})(z) &= \int E_{ij}(z-z') (\mathbf{u}^{(1)} \cdot \nabla \mathbf{u}^{(1)} - \mathbf{u}^{(2)} \cdot \nabla \mathbf{u}^{(2)})_j(z') \, dz'_1 dz'_2 \nonumber \\
& = - \int \partial_l E_{ij}(z-z') (u^{(1)}_l u^{(1)}_j - u^{(2)}_l u^{(2)}_j)(z') \, dz'_1 dz'_2 \nonumber\\
& = - I_i(\mathbf{u}^{(1)} - \mathbf{u}^{(2)}, \mathbf{u}^{(2)}) - I_i(\mathbf{u}^{(1)}, \mathbf{u}^{(1)} - \mathbf{u}^{(2)}) .
\end{align}
Using \eqref{eq5Xsmall}--\eqref{eq5Xsmall--} and \eqref{sum-bil1}--\eqref{sum-bil2} we obtain
\begin{align*}
& \quad \  \|\mathbf{u}^{(1)} - \mathbf{u}^{(2)}\|_X\\ &\le C \|\mathbf{u}^{(1)} - \mathbf{u}^{(2)}\|_X \left( \|\mathbf{u}^{(1)}\|_Y + \|\mathbf{u}^{(2)}\|_Y +\|\mathbf{u}^{(1)}\|_{L^5(B_{2R})} R^{\frac45} + \|\mathbf{u}^{(2)}\|_{L^5(B_{2R})} R^{\frac45}\right) \\
&\le C' \e\|\mathbf{u}^{(1)} - \mathbf{u}^{(2)}\|_X,
\end{align*}
which immediately implies that $\mathbf{u}^{(1)} \equiv \mathbf{u}^{(2)}$ when $\e$ is sufficiently small.
\medskip

\centerline{\bf Case~II: $R<1$.} 

\medskip

The proof here is similar to Case II in the proof of Theorem \ref{th-s1}. By Lemma \ref{lem-change-of-domain}, we have 
\begin{equation}
\|\mathbf{f}\|_{H^{-1}(B_2)} \le C \left( 1 + \ln ^\frac12 \left(\frac{1}{R}\right) \right) \|\mathbf{f}\|_{H^{-1}(B_{2R})}
\end{equation}
 Since $R < 1$, $\mathbf{f}$ is of course compactly supported in $B_1$. Hence, we can reduce to Case I with the new parameters $$R' = 1$$ and
$$A' \le C \ln^\frac12 \left(2+\frac{1}{R}\right) A.$$
Finally, in the case that $\mathcal{F} = 0$, the logarithmic factors here can be removed due to the last statement in Lemma \ref{lem-change-of-domain}.

\end{proof}

\appendix

\section{Uniform estimates for the pressure}\label{appendix-1}

In this section we have to prove estimates~(\ref{per-eq3b}) and (\ref{per-eq-p1}) concerning pressure. 
The main idea of ``good circles'' method is quite simple: 
if we have a~nonnegative integrable function $g:\Omega\to\R_+$ on the~annulus domain 
$\Omega=\Omega\bigl(r_0,\beta r_0\bigr)$ with $$\int\limits_\Omega g=\int\limits_{r_0}^{\beta r_0}\biggl(\ \int\limits_{S_r}g\,ds\biggr)\,dr=M,$$ then there exists a~'good' circle $S_{r_*}$ with $r_*\in[r_0,\beta r_0]$ such that 
$$\int\limits_{S_{r_*}}g\le\frac{M}{(\beta-1) r_0}.$$
So the assertion~(\ref{per-eq3b})  follows from~(\ref{eq-per1}) easily (the role of~$g$ is played by~$|\nabla p|^\frac43$ there). 

Further, following exactly the same argument, from the~assertion~(\ref{eq-per1}) we can deduce the existence of sequence of good radii 
$$\rho_1=\tilde r_1,\ \rho_2,\ \rho_3,\ \dots,\ \rho_k=\tilde r_2,$$
such that 
\begin{equation} \label{app-eq3b}
\left\{
\begin{aligned}
&\rho_{i+1}\in\bigl(2\rho_i,3\rho_i\bigr),\quad i=1,\dots,k-1; \\
& \int_{{\rho}_i<r<\rho_{i+1}} |\nabla p|^{\frac43} ds\le C {\rho_i}^\frac23\,m^\frac43\,D^\frac23(1+\mu)^\frac43, \\
& \int_{S_{{\rho}_i}} |\nabla p|^{\frac43} ds\le C \frac1{{\rho_i}^\frac13}\,m^\frac43\,D^\frac23(1+\mu)^\frac43, \\
&\int_{S_{{\rho}_i}} |\nabla p| ds \le (2\pi {\rho}_i)^\frac14 \left( \int_{S_{{\rho}_i}} |\nabla p|^\frac43 ds \right)^\frac34  \le C \,m\,D^\frac12(1+\mu).
\end{aligned}
\right.
\end{equation}
By (\ref{eq-pr-mean}) we have 
\begin{equation} \label{app-pr-mean}
|\bar p(r)|\le C D\le C\me\sqrt{D}\qquad\mbox{ for any }r\in[\rho_1,\rho_k],
\end{equation}
thus the estimates~(\ref{app-eq3b}${}_4$) and the trivial inequality
$|p(z)|\le\bar p(r)+\int\limits_{S_r}|\nabla p|\,ds$ for $z\in S_r,$
imply the corresponding pointwise bound for the pressure:
\begin{equation} \label{app-eq-3-5}
|p|\le C\me (1+\mu) \sqrt{D}\mathrm{\ \ \ \ on\  } S_{{\rho}_i}, \ i=1,\dots,k.
\end{equation}
Now take arbitrary and fix a~point $z_0\in \Omega(\rho_i,\rho_{i+1})$ with $i\in\{2,\dots,k-2\}$. Put $R_0=|z_0|-\rho_{i-1}$ and denote by $B(z_0,R_0)$ the disk centered at~$z_0$ with radius~$R_0$.
Then by construction and by the triangle inequality,
\begin{equation} \label{app-p-nc1}
\begin{aligned}
& B(z_0,R_0)\subset\Omega\bigl(\rho_{i-1},\rho_{i+2}\bigr), \\
&\frac12\rho_{i}<R_0<\frac83\rho_i,\\
& B(z_0,R_0)\cap S_{\rho_i}\ne\emptyset\ne B(z_0,\frac12R_0)\cap S_{\rho_i}.
\end{aligned}
\end{equation}
From~(\ref{app-eq3b}${}_2$) and  (\ref{app-p-nc1}${}_{1-2}$) we have 
\begin{equation} \label{app-p-nc2}
 \int\limits_{B(z_0,R_0)} |\nabla p|^{\frac43} ds\le C {R_0}^\frac23\,m^\frac43\,D^\frac23(1+\mu)^\frac43.
\end{equation}
Therefore, there is a~``good'' circle $S_{z_0,R_*}$ centered at~$z_0$ with radius  $R_*\in\bigl[\frac12 R_0,R_0\bigr]$ such that 
\begin{equation} \label{app-p-nc3}
\left\{
\begin{aligned}
& \int_{S_{z_0,R_*}} |\nabla p|^{\frac43} ds\le C \frac1{{R_*}^\frac13}\,m^\frac43\,D^\frac23(1+\mu)^\frac43, \\
&\int_{S_{z_0,R_*}} |\nabla p| ds \le (2\pi R_*)^\frac14 \left( \int_{S_{z_0,R_*}} |\nabla p|^\frac43 ds \right)^\frac34  \le C \,m\,D^\frac12(1+\mu).
\end{aligned}
\right.
\end{equation}
Since $S_{z_0,R_*}\cap S_{\rho_i}\ne\emptyset$ (see (\ref{app-p-nc1}${}_{3}$)\,), from~(\ref{app-eq-3-5}) and (\ref{app-p-nc3}${}_{2}$) we obtain
\begin{equation} \label{app-p-nc4}
|p|\le C\,m\,D^\frac12(1+\mu)\mathrm{\ \ \ \ on\ \  } S_{z_0,R_*},
\end{equation}
consequently,
\begin{equation} \label{app-p-nc5}
|\bar p(z_0,R_*)|\le C\,m\,D^\frac12(1+\mu),
\end{equation}
where left hand side denotes, as usual, the mean value of~$p$ over the~circle~$S_{z_0,R_*}$. Finally, from the last inequality~(\ref{app-p-nc5}) and from the~basic pressure estimate~(\ref{eq:pdifference}) we obtain immediately 
\begin{equation} \label{app-p-nc6}
|p(z_0)|\le C\,m\,D^\frac12(1+\mu).
\end{equation}
Because of arbitrariness of $z_0$ we have proved the estimate
\begin{equation} \label{app-p-nc7}
|p|\le C\,m\,D^\frac12(1+\mu)\mathrm{\ \ \ \ in\ \  } \Omega(\rho_2,\rho_{k-1})\subset \Omega\bigl(3\tilde r_1,\frac13\tilde r_2\bigr)\subset \Omega\bigl(6r_1,\frac16r_2\bigr)
\end{equation}
as required. 

\

\section{Estimates for the gradient of vorticity}\label{appendix-2}

In this section we have to prove that for any $\rho\in[6r_1,\frac1{30}r_2]$  and $D_\rho:=D(\rho,5\rho)$
there exist two ``good'' radii $\rho_1\in (\rho,2\rho)$, $\rho_2\in(4\rho,5\rho)$ satisfying estimates~(\ref{eq3b})--(\ref{eq3c}).

Denote $f(r)=\int_{S_{r}} \omega^2 ds$. Then by construction $0<f\in C^\infty
\bigl([\rho,5\rho]\bigr)$,
 \begin{equation} \label{appe1}
\int\limits_\rho^{5\rho}f(r)\,dr\le2D_\rho.
 \end{equation}
 Denote 
 $$T=\bigl\{r\in[\rho,5\rho]:f(r)\le \frac8{\rho}D_\rho\bigr\}.$$
 Then by Chebyshev inequality
\begin{equation} \label{appe2}
\meas\biggl([\rho,5\rho]\setminus T\biggr)\le \frac14\rho.
 \end{equation}
 Therefore, there exist points $$t_1\in T\cap\bigl[\rho,\frac43\rho\bigr],\qquad
 t_2\in T\cap\bigl[\frac{14}3\rho,5\rho\bigr].$$ 
 Denote
 $$\tau_1=\max\{r\in[t_1,2\rho]: r\in T\}.$$
 We have to consider two different subcases:
 
 {\bf Case 1}: $\tau_1<2\rho$. Then by construction 
 $f'(\tau_1)\ge0.$ So we put $\rho_1=\tau_1$ and by construction we have 
 \begin{equation} \label{appe3}
\rho_1\in[\rho,2\rho],\qquad f(\rho_1)\le \frac8{\rho}D_\rho,  \qquad f'(\rho_1)\ge0
 \end{equation}

{\bf Case 2}: $\tau_1=2\rho$. Then $\tau_1-t_1\ge\frac23\rho$,  \ $|f(\tau_1)-f(t_1)|\le 
\frac8{\rho}D_\rho$. Therefore by Lagrange mean value theorem there exists $\rho_1\in [t_1,\tau_1]$ with $f'(\rho_1)(\tau_1-t_1)=f(\tau_1)-f(t_1)$, in particular,
$$|f'(\rho_1)|\le \frac{12}{\rho^2}D_\rho.$$

So for both cases 1--2 we find $\rho_1\in[\rho,2\rho]$  satisfying
 \begin{equation} \label{appe4}
\rho_1\in[\rho,2\rho],\qquad f(\rho_1)\le \frac8{\rho}D_\rho,  \qquad -f'(\rho_1)\le
\frac{12}{\rho^2}D_\rho.
 \end{equation}
 
Similarly, we can find $\rho_2\in[4\rho,5\rho]$ satisfying
 \begin{equation} \label{appe5}
\rho_2\in[4\rho,5\rho],\qquad f(\rho_2)\le \frac8{\rho}D_\rho,  \qquad f'(\rho_2)\le
\frac{12}{\rho^2}D_\rho.
 \end{equation}
 
So the required estimates~(\ref{eq3b})--(\ref{eq3c}) are proved completely. \hfill\qed

 \
 
 {\bf Data availability statement.} {\sl Data sharing not applicable to this article as no datasets were generated or analyzed during the current study.}

\

{\bf Conflict of interest statement}. {\sl The authors declare that they have no conflict of interest.}

\bibliographystyle{plain}

\begin{thebibliography}{10}

\bibitem{Amick}
{\sc C.J. Amick},
\newblock {On Leray's problem of steady Navier--Stokes flow past a body in the plane.}
\newblock {\em Acta Math.} {\textbf{161} (1988), 71--130}.

\bibitem{ABC} {\sc D.~Albritton, E.~Bru\'{e}, M.~Colombo}, \newblock{ Non-uniqueness of Leray solutions of the forced Navier-Stokes equations} (2021),
\newblock  {\em arXiv:} 2112.03116

\bibitem{CLMS}
{\sc R.R. Coifman, J.L. Lions, Y. Meier, and S. Semmes},
\newblock{Compensated compactness and Hardy spaces.} 
\newblock {\em J. Math. Pures App. IX S\'er.} {\bf 72} (1993), 247--286.



\bibitem{FS}
{\sc R. Finn, D.R. Smith},
\newblock {On the stationary solutions of the Navier--Stokes equations in two dimensions.}
\newblock {\em Arch. Ration. Mech. Anal.} {\textbf{25} (1967), 26--39}.


\bibitem{GaldiS}   {\sc Galdi, G.P., Sohr, H.}, On the asymptotic structure of plane steady flow of a viscous
fluid in exterior domains, {\em Arch. Ration. Mech. Anal.\/}  {\bf131} (1995), 101--119.


\bibitem{G}
{\sc G.P. Galdi},
\newblock {\em An Introduction to the Mathematical Theory of the Navier--Stokes Equations, Steady-state problems.}
\newblock {Springer (2011)}.

\bibitem{G2}
 G.P. Galdi,
 \newblock {\em On the existence of symmetric steady-state solutions to the
               plane exterior {N}avier-{S}tokes problem for arbitrary large
               {R}eynolds number.}
 \newblock {Advances in fluid dynamics, \textbf{4} (1999), 1--25}.

\bibitem{GW1}  {\sc D. Gilbarg and H.F. Weinberger}, Asymptotic properties of Leray's solution of the
stationary two--dimensional Navier--Stokes equations, {\em Russian
Math. Surveys\/} {\bf 29} (1974), 109--123.

\bibitem{GW}
{\sc D. Gilbarg, H.F. Weinberger},
\newblock {Asymptotic properties of steady plane solutions of the Navier--Stokes equation with bounded Dirichlet integral.}
\newblock {\em Ann. Scuola Norm. Sup. Pisa Cl. Sci.} {(4) \textbf{5} (1978), 381--404}.

\bibitem{JG}
{\sc J. Guillod}, Steady solutions of the Navier--Stokes equations
in the plane,  {\em arXiv:}1511.03938 

\bibitem{GuilW1}
{\sc J. Guillod and P. Wittwer}, Optimal asymptotic behavior of the vorticity of a viscous flow past a two-dimensional body, {\em J. Math. Pures Appl.\/}  {\bf 118} (2017),  481--499.

\bibitem{GuilW2}
{\sc J. Guillod and P. Wittwer}, Existence and uniqueness of steady weak solutions to the Navier--Stokes equations in $ \mathbb{R}^{2}$, {\em Proc. Amer. Math. Soc.\/}   {\bf 146} (2018), 4429--4445.


\bibitem{KPR}
{\sc M.V. Korobkov, K. Pileckas, R. Russo},
\newblock {On convergence of arbitrary $D$-solution of steady Navier--Stokes system in $2D$ exterior domains.}
\newblock {\em Arch. Ration. Mech. Anal.} {\textbf{233} (2019), no. 1, 385--407}.

\bibitem{KPR2}
{\sc M.V. Korobkov, K. Pileckas, R. Russo},
\newblock {On the steady {N}avier-{S}tokes equations in {$2D$} exterior
              domains.}
\newblock {\em J. Differential Equations}. {\textbf{269} (2020), no. 3, 1796--1828}.

\bibitem{KPR20}
{\sc M.V. Korobkov, K. Pileckas, R. Russo},
\newblock {Leray's plane steady state solutions are nontrivial.}
\newblock {\em Advances in Mathematics.}  {\textbf{269} (2020), no. 3, 1796--1828}.

\bibitem{KR21}
{\sc M.V. Korobkov, X. Ren},
\newblock{Uniqueness of Plane Stationary Navier--Stokes Flow Past an Obstacle.} 
\newblock {\em Arch. Rational Mech. Anal. } {(2021),  https://doi.org/10.1007/s00205-021-01640-9 }

\bibitem{KR21new}
{\sc M.V. Korobkov, X. Ren},
\newblock {Leray's plane stationary solutions at small Reynolds numbers.}
\newblock {\em J. Math. Pures Appl.} {(2021), https://doi.org/10.1016/j.matpur.2021.08.010}


\bibitem{maz'ya} {\sc V.G. Maz'ya,} \newblock {\em Sobolev spaces with applications to elliptic partial differential equations}.  Grundlehren der Mathematischen Wissenschaften [Fundamental Principles of Mathematical Sciences], {\bf 342}. Springer, Heidelberg, 2011.

\bibitem{Leray}
{\sc J. Leray}, { \'{E}tude de diverses \'{e}quations int\'{e}grales non lin\'{e}aires et de quelques probl\`emes que pose l'hydrodynamique}
\newblock {\em J. Math. Pures Appl.}  {\textbf{12} (1933), 1--82}.




\bibitem{S}
{\sc L.I. Sazonov},
\newblock { On the asymptotic behavior of the solution to the two-dimensional stationary problem of the flow past a body far from it.}
\newblock {\em Mathematical Notes} {\textbf{65} (1999), no. 2, 202--207}.


\bibitem{Seregin}
{\sc G. Seregin},
\newblock {Remarks on {L}iouville type theorems for steady-state {N}avier--{S}tokes equations.}
\newblock {\em Algebra i Analiz}, {\textbf{30} (2018), no.~2, 238--248}.

\bibitem{Sm}
{\sc D.R. Smith},
\newblock {Estimates at infinity for stationary solutions of the Navier--Stokes equations in two dimensions.}
\newblock {\em  Arch. Rational Mech. Anal.} {\textbf{20} (1965), 341--372}.

\bibitem{St} 
{\sc E. Stein},
\newblock {\em Harmonic analysis: real--variables methods, orthogonality and oscillatory integrals.}
\newblock {Princeton University Press (1993)}.

\bibitem{Y}
{\sc V. I. Yudovich,}
\newblock { Eleven great problems of mathematical hydrodynamics.}
\newblock {\em Mosc. Math. J.} {\textbf{3} (2003), 711--737}.

\bibitem{Yam09}
{\sc M. Yamazaki},
\newblock {The stationary {N}avier--{S}tokes equation on the whole plane
              with external force with antisymmetry}
\newblock {\em Ann. Univ. Ferrara Sez. VII Sci. Mat.} \textbf{55} (2009), 407--423.



\end{thebibliography}

\end{document}